\newtheorem{thm}{Theorem}
\newtheorem{cor}[thm]{Corollary}
\newtheorem{defin}{Definition}
\newtheorem{lem}[thm]{Lemma}
\newcounter{obs}
\newtheorem{obser}[obs]{Observation}
\newtheorem{exa}{Example}
\newenvironment{example}{\begin{exa} \rm }{\hfill$\Box$ \end{exa}}
\newenvironment{definition}{\begin{defin} \rm }{\end{defin}}
\newenvironment{proof}{\begin{trivlist}\item[] \mbox{\it Proof. }}
{\hfill$\Box$ \end{trivlist}}
\def\dv{\mathbb}
\def\ci{\perp\!\!\!\perp}
\def\carr{\mbox{\rm carr}\,}
\def\supp{\mbox{\rm supp}\,}
\def\inte{\mbox{\rm relint}\,}
\def\upA{A\subseteq}
\def\upi{i\subseteq}
\def\upemp{\emptyset\subseteq}
\def\caP{{\cal P}(N)}  
\def\caPM{{\cal P}(M)}
\def\caPp{{\cal P}(N^{\prime})}
\def\caPL{{\cal P}(L)}
\def\caPR{{\cal P}(R)}
\def\caE{{\cal E}(N)}  
\def\caEM{{\cal E}(M)}
\def\caER{{\cal E}(R)}
\def\calK{\lozenge}  
\def\caK{\lozenge(N)}
\def\caKS{\lozenge_{\cal S}(N)}
\def\caKl{\lozenge_{\ell}(N)}
\def\caKu{\lozenge_{u}(N)}
\def\caKo{\lozenge_{o}(N)}
\def\caKw{\lozenge^{*}(N)}
\def\caKRw{\lozenge^{*}(R)}
\def\caKLw{\lozenge^{*}(L)}
\def\caKMl{\lozenge_{\ell}(M)}
\def\caKRl{\lozenge_{\ell}(R)}
\def\caKLl{\lozenge_{\ell}(L)}
\def\caKM{\lozenge(M)}
\def\caL{{\cal L}(N)}
\def\caLM{{\cal L}(M)}
\def\caS{{\cal S}(N)}
\def\calS{{\cal S}}
\def\caF{{\cal F}(N)}
\def\calF{{\cal F}}
\def\caFS{{\cal F}_{\cal S}(N)}
\def\caSl{{\cal S}_{\ell}(N)}
\def\caSu{{\cal S}_{u}(N)}
\def\caSo{{\cal S}_{o}(N)}
\def\calD{{\cal D}}
\def\calI{{\cal I}}
\def\caU{{\cal U}(N)}
\def\mS{m_{\cal S}}
\def\bolpi{\mbox{\boldmath $\pi$}}
\def\smbolpi{\mbox{\footnotesize\boldmath $\pi$}}
\def\boliota{\mbox{\boldmath $\iota$}}
\def\bolsigma{\mbox{\boldmath $\sigma$}_{-1}}
\def\compo{\circ} 
\def\liftMN{T_{M\nearrow N}} 
\def\minoDE{T_{-D|E}} 
\def\maxminM{T_{\max\searrow M}}
\def\coars{T_{\sigma_{-1}}\!}
\def\contrS{T_{S\to s}}
\def\lowmod{T_{\ell\mbox{\rm\scriptsize -mod}}}
\def\uppmod{T_{u\mbox{\rm\scriptsize -mod}}}
\def\lowrepl{T_{\downarrow\downarrow}}
\def\upprepl{T_{\uparrow\uparrow}}
\def\res{r}
\def\ext{m}
\def\fac{\mbox{\sf F}}
\begin{document}
\addtolength{\baselineskip}{1mm}
\parskip 0mm

\title{Basic facts concerning extreme supermodular functions\thanks{This research has been
supported by the grant GA\v{C}R n.\ 16-12010S}}
\author{Milan Studen\'{y}\\[0.5ex]
the Institute of Information Theory and Automation of the CAS}
\date{\today} 
\maketitle

\begin{abstract}
Elementary facts and observations on the cone of supermodular set functions are recalled.
The manuscript deals with such operations with set functions which preserve supermodularity
and the emphasis is put on those such operations which even preserve extremality
(of a supermodular function). These involve a few {\em self-transformations\/} of the cone
of supermodular set functions. Moreover, {\em projections\/} to the (less-dimensional) linear space of
set functions for a {\em subset\/} of the variable set are discussed. Finally, several {\em extensions\/} to the
(more-dimensional) linear space of set functions for a {\em superset\/} of the variable set are shown
to be both preserving supermodularity and extremality.
\end{abstract}
\noindent \textit{\it Keywords}: supermodular function,\, standardizations,\, extreme
supermodular function,\\
\mbox{permutational} transformation, reflection, lifting, support,
minor, coarsening, contraction,\\
 modular extension, replication

\section{Introduction}
The source of motivation for this technical report is the problem of characterization of extreme
supermodular functions. These functions play very important role in describing
(probabilistic) conditional independence structures; see the open problem from \cite[Theme~6]{stu05}.
Nevertheless, supermodular functions and their mirror images, submodular functions, also appear in various branches
of discrete mathematics and have numerous applications in computer science and optimization
\cite{sha72,top98,ZCJ09,bac10}.
There are criteria to recognize whether a given supermodular function is extreme; one of them
was proposed already in the 1970's \cite{roswei74} and a new elegant alternative has recently
been proposed in \cite{SK16}.

In connection with the application of  extreme supermodular functions in testing conditional independence
implication by means of a computer a catalogue of all (types of) extreme supermodular functions over 5 variables has been
created \cite{stuboukoc00}. The paper \cite{KSTT12} contains several interesting results on (linear) operations
with supermodular functions which preserve extremality. The aim of this technical report is to gather
the results on operations with set functions preserving supermodularity and put the observations from \cite{KSTT12}
in this context. Specifically, the operations from \cite[\S\,4]{KSTT12} are interpreted as special cases of
more general supermodularity-preserving operations. This hopefully will make it possible, in future, to interpret
these operations in geometric terms in connection with game-theoretical concept of the {\em core polytope} of
a supermodular game \cite{sha72,SK16}.

The next four sections of the report contain elementary concepts. Section \ref{sec.self-transformations} is then devoted
to {\em self-transformations}, which are mappings from ${\dv R}^{\caP}$ to itself, where $N$ is a {\em variable set\/}
and $\caP$ its power set. Fundamental such operations are the {\em permutational transformation} and the
{\em reflection transformation}.
A special {\em lifting transformation\/} and a related concept of the {\em support\/} of a function
in ${\dv R}^{\caP}$ are discussed in Section \ref{sec.lifting}. This transformation is a basic
{\em extension operation}, that is, a mapping from ${\dv R}^{\caPM}$ to ${\dv R}^{\caP}$, where $M\subseteq N$.
On the other hand, Section~\ref{sec.projections} is devoted to various {\em projections}, that is,
mappings from ${\dv R}^{\caP}$ to ${\dv R}^{\caPM}$, where $M\subseteq N$.
The important ones are the {\em minor\/}  projections and the {\em coarsening transformation}.
These operations preserve supermodularity but not extremality.
The last two sections are devoted to the other extension operations
besides the lifting transformation. At least four different such operations are mentioned:
two kinds of so-called {\em modular extensions} are introduced and two kinds of so-called {\em replications}
are discussed. All of them are shown to preserve both supermodularity and extremality.

\section{Notation}

Let $N$ be a finite non-empty set of {\em variables}; $n := |N|\geq 2$ and $\caP := \{ A : A\subseteq N\}$.\\
Given $a\in N$, the symbol $a$ will also be used to denote the singleton $\{ a\}$.\\
We consider the space ${\dv R}^{\caP}$ of the dimension $2^{n}$ equipped with scalar product
$$
\langle m,u\rangle := \sum_{A\subseteq N} m(A)\cdot u(A)\qquad
\mbox{for ~$m,u\in {\dv R}^{\caP}$}.
$$
Given $A\subseteq N$, we introduce $\delta_{A}\in {\dv R}^{\caP}$ and $m^{\upA}\in {\dv R}^{\caP}$ by the formulas:
$$
\delta_{A}(S) =\left\{
\begin{array}{cl}
+1 & \mbox{if $S=A$,}\\
0 & \mbox{otherwise},
\end{array}
\right.
~~
m^{\upA}(S) =\left\{
\begin{array}{cl}
+1 & \mbox{if $A\subseteq S$,}\\
0 & \mbox{otherwise},
\end{array}
\right.
~~
\quad \mbox{for $S\subseteq N$.}
$$
The symbol $\caE$ will denote the class of {\em elementary} triplets $\langle a,b|C\rangle$, where
$a,b\in N$ are distinct and $C\subseteq N\setminus \{ a,b\}$. Every such triplet is assigned a vector
$u_{\langle a,b|C\rangle}\in {\dv Z}^{\caP}$, called an {\em elementary imset}, given by
$$
u_{\langle a,b|C\rangle}(S) =\left\{
\begin{array}{cl}
+1 & \mbox{if $S=C$ or $S= a\cup b\cup C$},\\
-1 & \mbox{if $S=a\cup C$ or $S=b\cup C$},\\
0 & \mbox{otherwise}.
\end{array}
\right.
$$
Sometimes, even more general notation will be used: given an ordered triplet $A,B,C$ of pairwise disjoint subsets of $N$, we put
$$
u_{\langle A,B|C\rangle}(S) := \delta_{A\cup B\cup C} +\delta_{C} -\delta_{A\cup C} -\delta_{B\cup C}\,,\quad
\mbox{where canceling terms is possible.}
$$
The symbol $\delta(\star\star)$, where $\star\star$ is a predicate (= statement),
will occasionally denote a zero-one function whose value is $+1$ if the statement $\star\star$ is valid and $0$ otherwise.

\section{Basic definitions}

\begin{defin}\rm
A function $m:\caP \mapsto {\dv R}$, that is, an element of ${\dv R}^{\caP}$, is {\em supermodular\/} if
$$
\forall\, A,B\subseteq N\quad
m(A)+m(B)\leq m(A\cup B)+ m(A\cap B)\,.
$$
A {\em modular} function is defined by equality, the {\em submodular} one by converse inequality.
The symbol $\caK$ will denote the class of supermodular functions over $N$, $\caL$ the class 
of modular functions over $N$.
\end{defin}

Supermodular functions are characterized by means of $n\cdot (n-1)\cdot 2^{n-3}$ inequalities that correspond
to elementary imsets:
$$
\forall\, m\in {\dv R}^{\caP}\qquad
m\in\caK ~\Leftrightarrow ~
[\,\, \forall\,\langle a,b|C\rangle\in\caE \quad \langle m,u_{\langle a,b|C\rangle} \rangle\geq 0 \,].
$$
In particular, $\caK$ is a (rational) polyhedral cone in ${\dv R}^{\caP}$.
In fact, the above inequalities are just the facet-defining inequalities for the cone $\caK$ \cite[Corollary\,11]{KVV10};
they imply $\langle m,u_{\langle A,B|C\rangle} \rangle\geq 0$ for pairwise disjoint sets $A,B,C\subseteq N$.
Analogously, the class of submodular
functions $-\caK$ is a polyhedral cone and $\caL =\caK\cap(-\caK)$ is a linear subspace of ${\dv R}^{\caP}$.

\begin{defin}\rm\label{def.CI-induced}
Every $m\in\caK$ is assigned a  table of scalar products, which is a (non-negative) function on $\caE$:
$$
S_{m}: \langle a,b|C\rangle\in \caE ~\mapsto~ \langle m,u_{\langle a,b|C\rangle}\rangle\,.
$$
The {\em induced independency model} is $\calI(m) := \{\langle a,b|C\rangle\in\caE \,:\,
\langle m,u_{\langle a,b|C\rangle} \rangle =0\,\}$; its complement
$\calD(m) := \{\langle a,b|C\rangle\in\caE \,:\,
\langle m,u_{\langle a,b|C\rangle} \rangle >0\,\}$ is the respective {\em dependency model}.
For $\langle a,b|C\rangle\in\caE$, we will write $a\ci b\,|\,C\,\,[m]$ to denote
$\langle m,u_{\langle a,b|C\rangle} \rangle =0$, that is, $\langle a,b|C\rangle\in\calI(m)$,
meaning $a$ is {\em conditionally independent\/} of $b$ given $C$ with respect to $m$.
The notation $A\ci B\,|\,C\,\,[m]$ for pairwise disjoint $A,B,C\subseteq N$ will have analogous meaning.

Functions $m^{1},m^{2}\in\caK$ are {\em quantitatively equivalent}, or strongly equivalent, if they
induce the same table of scalar products:
$$
m^{1}\approx m^{2} ~:=~ [\,\,\forall\,\langle a,b|C\rangle\in\caE \quad \langle m^{1},u_{\langle a,b|C\rangle}\rangle = \langle m^{2},u_{\langle a,b|C\rangle}\rangle\,].
$$
Functions $m^{1},m^{2}\in\caK$ are {\em qualitatively equivalent}, or model equivalent, if they
induce the same (in)dependency model:
$$
m^{1}\sim m^{2} ~:=~ [\,\,\forall\,\langle a,b|C\rangle\in\caE \quad
\langle m^{1},u_{\langle a,b|C\rangle}\rangle =0 ~\Leftrightarrow~ \langle m^{2},u_{\langle a,b|C\rangle}\rangle =0\,].
$$
\end{defin}
Of course, $m^{1}\approx m^{2} ~\Rightarrow~ m^{1}\sim m^{2}$ for any $m^{1},m^{2}\in\caK$.

\section{Quantitative equivalence and standardizations}\label{sec.standardizations}
Quantitative equivalence is easily characterized in terms of modular functions:
$$
\forall\, m^{1},m^{2}\in\caK\qquad
m^{1}\approx m^{2} ~\Leftrightarrow~ m^{1}-m^{2}\in\caL\,.
$$
Since $\caL$ has the dimension $n+1$ and one of its bases consists of the functions $m^{\upemp}$ and
$m^{\upi}$ for $i\in N$, a more specific characterization is as follows: given $m^{1},m^{2}\in\caK$,
$$
m^{1}\approx m^{2} ~\Leftrightarrow~
\left[\,\,\exists\, k\in {\dv R},\, \rho (i)\in {\dv R}~ \mbox{for $i\in N$} \qquad
 m^{1}-m^{2}=k\cdot m^{\upemp}+\sum_{i\in N} \rho (i)\cdot m^{\upi}\,\,\right]\,.
$$

A consistent way of the choice of a representative within (any) equivalence class of $\approx$ corresponds to fixing a complementary space $\caS$ to $\caL$ in ${\dv R}^{\caP}$, which is a linear subspace of ${\dv R}^{\caP}$ such that $\caS\cap\caL=\{0\}$ and $\caS +\caL ={\dv R}^{\caP}$. In this case
every $m\in\caK$ has a uniquely determined decomposition
\begin{eqnarray*}
m=\mS +l &\mbox{where}& \mS\in  \caKS :=\caK\cap\caS ~~\mbox{and}~~ l\in\caL\,,\\
&&\mbox{which situation will be denoted by $\caK =\caKS\oplus\caL$.}
\end{eqnarray*}
Then $\mS$ is the unique element of $\{r\in\caK : r\approx m\}$
in $\caKS$. In particular,
$$
\forall\, m^{1},m^{2}\in\caK \qquad m^{1}\approx m^{2} ~\Leftrightarrow~ \mS^{1}=\mS^{2}\,.
$$
Three ways of the representative choice, that is, of a {\em standardization}, seem to be suitable:
\begin{description}
\item[the {$\ell$-standardization},] named the {\em lower standardization}, is determined by
\begin{eqnarray*}
\caSl &:=& \{\, m\in {\dv R}^{\caP} :~ m(\emptyset)=0 ~~\&~~ \forall\,i\in N\quad m(i)=0\,\} ~~\mbox{and}~\\
\caKl &:=& \caK\cap\caSl\,.
\end{eqnarray*}
Given $m\in\caK$, the formula for the respective representative $m_{\ell}\in\caKl$ is
\begin{eqnarray}
\lefteqn{\hspace*{-1.3cm}m_{\ell}(T)=m(T)+ k_{\ell}+ \sum_{i\in T}\, \rho_{\ell}(i)~~\mbox{for $T\subseteq N$},}\label{eq.l-stan}\\
&\mbox{where}& k_{\ell} =-m(\emptyset) ~~\mbox{and}~~ \rho_{\ell}(i)= m(\emptyset)-m(i)\quad \mbox{for $i\in N$}\,.\nonumber
\end{eqnarray}
A simple basic observation is that every $\ell$-standardized supermodular function is
\mbox{non-decreasing}, that is, $S\subseteq T ~\Rightarrow~
m_{\ell}(S)\leq m_{\ell}(T)$, and, hence, non-negative.
\item[the {$u$-standardization},] named the {\em upper standardization}, is determined by
\begin{eqnarray*}
\caSu &:=& \{\, m\in {\dv R}^{\caP} :~ m(N)=0  ~~\&~~  \forall\,i\in N\quad m(N\setminus i)=0\,\},\\
\caKu &:=& \caK\cap\caSu\,.
\end{eqnarray*}
Given $m\in\caK$, the formula for the respective representative $m_{u}\in\caKu$ is
\begin{eqnarray}
\lefteqn{\hspace*{-1.8cm}m_{u}(T)=m(T)+ k_{u}+ \sum_{i\in T}\, \rho_{u}(i)~~\mbox{for $T\subseteq N$},}\label{eq.u-stan}\\
&\mbox{where}& k_{u} =(n-1)\cdot m(N)-\sum_{j\in N} m(N\setminus j) \nonumber\\
 &\mbox{and}& \rho_{u}(i)= m(N\setminus i)-m(N)\quad \mbox{for $i\in N$}\,.\nonumber
\end{eqnarray}
Again, one can easily observe that every $u$-standardized supermodular function is non-increasing, that is, $S\subseteq T ~\Rightarrow~
m_{u}(S)\geq m_{u}(T)$, and, hence, non-negative.
\item[the {$o$-standardization},] named the {\em orthogonal standardization}, is determined by
\begin{eqnarray*}
\caSo &:=& \{\, m\in {\dv R}^{\caP} :~ \sum_{S\subseteq N} m(S)=0  ~~\&~~  \forall\,i\in N\quad \sum_{S\subseteq N:\, i\in S} m(S)=0\,\},\\
\caKo &:=& \caK\cap\caSo\,.
\end{eqnarray*}
Note that the requirements are, in fact, $\langle m,m^{\upemp}\rangle=0$ and
$\langle m,m^{\upi}\rangle=0$ for $i\in N$, which means that $\caSo$ is the orthogonal complement to $\caL$.
The formula for the respective representative $m_{o}\in\caKo$, on basis of any $m\in\caK$, is
\begin{eqnarray}
\lefteqn{\hspace*{-0.5cm}m_{o}(T)=m(T)+ k_{o}+ \sum_{i\in T}\, \rho_{o}(i)~~\mbox{for $T\subseteq N$},}\label{eq.o-stan}\\
&\mbox{where}& k_{o} = \frac{1}{2^{n-1}}\cdot \sum_{S\subseteq N} |S|\cdot m(S) - \frac{n+1}{2^{n}}\cdot\sum_{S\subseteq N} m(S)\nonumber\\
&\mbox{and}& \rho_{o}(i)= \frac{1}{2^{n-1}}\cdot \left[\,\sum_{S\subseteq N} m(S)-2\cdot \sum_{S\subseteq N:\, i\in S} m(S) \,\right]\quad \mbox{for $i\in N$}\,.\nonumber
\end{eqnarray}
A notable fact is that in case of an integral (= integer-valued) $\ell$-standardized representative the respective $u$-standardized representative is also integral, but the respective \mbox{$o$-standardized} representative may be fractional. Moreover,
the $o$-standardized representative need not be non-negative.
\end{description}

\noindent {\em Remark}~ There are other options. In the context of
submodular functions and matroid theory the following special {\em polymatroidal\/}
standardization appears to be utilized: put
\begin{eqnarray*}
\calS_{p}(N) &:=& \{\, m\in {\dv R}^{\caP} :~ m(\emptyset)=0 ~~\&~~  \forall\,i\in N\quad m(N\setminus i)=m (N)\,\} ~~\mbox{and}~\\
\calK_{p}(N) &:=& \caK\cap\calS_{p}(N)\,,
\end{eqnarray*}
which, however, in the supermodular context, leads to non-increasing and non-positive representatives of equivalence
classes of of quantitative equivalence $\approx$.

\section{Qualitative equivalence and extremality}\label{sec.qual-extreme}

In this section, we assume the reader is familiar with some basic facts from polyhedral geometry, gathered
in Appendix, \S\,\ref{sec.face}.

\begin{defin}\rm
The symbol $\caF$ will denote the lattice of non-empty faces of\/\, $\caK$ ordered by inclusion $\subseteq$.
We will also call it the {\em face lattice\/} of the cone $\caK$.
For any $m\in\caK$, $F(m)$ will denote the smallest face containing $m$:
$$
F(m) := \bigcap\, \{ F :\ F\in\caF ~\& ~ m\in F\}\,.
$$
\end{defin}

\begin{obser}\rm\label{obs.face}
~~$\forall\,  m^{1},m^{2}\in\caK\qquad
F(m^{1})\subseteq F(m^{2}) ~\Leftrightarrow~ \calI (m^{1})\supseteq\calI (m^{2})$\,.
\end{obser}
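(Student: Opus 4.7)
The plan is to reduce both directions of the equivalence to a single structural identity: for every $m\in\caK$, the minimal face $F(m)$ coincides with
\[
G(m) \,:=\, \{\, r\in\caK \,:\, \calI(r)\supseteq\calI(m)\,\}
\,=\, \{\, r\in\caK \,:\, \forall\,\langle a,b|C\rangle\in\calI(m)\ \ \langle r, u_{\langle a,b|C\rangle}\rangle=0\,\}.
\]
Once this is established, the observation is immediate: if $F(m^{1})\subseteq F(m^{2})=G(m^{2})$, then $m^{1}\in G(m^{2})$ and hence $\calI(m^{1})\supseteq\calI(m^{2})$; conversely, $\calI(m^{1})\supseteq\calI(m^{2})$ says exactly that $m^{1}\in G(m^{2})=F(m^{2})$, so the smallest face $F(m^{1})$ containing $m^{1}$ must be a subset of $F(m^{2})$.

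For the identity $F(m)=G(m)$, I would argue two inclusions. The inclusion $F(m)\subseteq G(m)$ follows because each hyperplane $H_{\langle a,b|C\rangle}:=\{r\in{\dv R}^{\caP}:\langle r,u_{\langle a,b|C\rangle}\rangle=0\}$ is a supporting hyperplane of $\caK$ (its defining inequality is a valid one for $\caK$), so $\caK\cap H_{\langle a,b|C\rangle}$ is a face; intersecting these over $\langle a,b|C\rangle\in\calI(m)$ gives a face of $\caK$ which clearly contains $m$. Since $F(m)$ is the smallest face containing $m$, one gets $F(m)\subseteq G(m)$.

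For the reverse inclusion I would invoke the fact, recalled in the excerpt from \cite{KVV10}, that the inequalities $\langle m,u_{\langle a,b|C\rangle}\rangle\geq 0$ with $\langle a,b|C\rangle\in\caE$ are exactly the facet-defining inequalities of $\caK$. By a standard fact in polyhedral geometry (collected in the Appendix referred to as \S\,\ref{sec.face}), every non-empty face of $\caK$ is of the form $F_{\calJ}:=\{r\in\caK:\langle r,u_{\langle a,b|C\rangle}\rangle=0\ \mbox{for all}\ \langle a,b|C\rangle\in\calJ\}$ for some $\calJ\subseteq\caE$. If $m\in F_{\calJ}$, then $\calJ\subseteq\calI(m)$, hence $G(m)=F_{\calI(m)}\subseteq F_{\calJ}$; applying this to $F_{\calJ}=F(m)$ yields $G(m)\subseteq F(m)$.

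The only delicate point is the representation of arbitrary faces in the form $F_{\calJ}$, which is precisely where the facet-defining property of the elementary-imset inequalities is used; everything else is a routine bookkeeping of inclusions. I would therefore organise the proof by first stating the polyhedral fact cleanly, then proving $F(m)=G(m)$ as a lemma-style identity, and finally deducing the two implications of the observation in one short symmetric paragraph.
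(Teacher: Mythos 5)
Your proof is correct and rests on exactly the same two ingredients as the paper's own argument: the elementary-imset inequalities are facet-defining for $\caK$, and every non-empty face is the intersection of the facets containing it. You merely repackage this as the explicit identity $F(m)=\{\,r\in\caK:\ \calI(r)\supseteq\calI(m)\,\}$, whereas the paper argues directly that $F(m^{1})\subseteq F(m^{2})$ holds iff every facet containing $m^{2}$ also contains $m^{1}$ and then translates facets into elementary imsets; the mathematical content is the same.
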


\begin{proof}
Let $\calF_{*}(N)$ denote the class of facets of\/ $\caK$. As every face is the intersection of facets containing it, $\forall\, m\in\caK$ one has $F(m)=\bigcap_{F\in\caF :\, m\in F} F=\bigcap_{F\in\calF_{*}(N) :\, m\in F} F$.
Hence, $F(m^{1})\subseteq F(m^{2})$ iff
\begin{equation}
\forall\, F\in\calF_{*}(N)\qquad m^{2}\in F ~\Rightarrow~  m^{1}\in F\,;
\label{eq.face}
\end{equation}
the necessity of \eqref{eq.face} being derived by a contradiction.
However, facets of\/ $\caK$ correspond to elementary imsets; specifically, they have the form
$F=\{ m\in\caK :~ \langle m,u_{\langle a,b|C\rangle}\rangle =0\,\}$ for triplets $\langle a,b|C\rangle\in\caE$.
Therefore, \eqref{eq.face} is equivalent to the condition $\forall\,\langle a,b|C\rangle\in\caE$,
$\langle m^{2},u_{\langle a,b|C\rangle}\rangle =0 ~\Rightarrow~ \langle m^{1},u_{\langle a,b|C\rangle}\rangle =0$, which is exactly\,
$\calI (m^{1})\supseteq\calI (m^{2})$.
\end{proof}

\begin{cor}[characterization of qualitative equivalence]\rm\label{cor.face} ~~\\
~~$\forall\,  m^{1},m^{2}\in\caK\qquad m^{1}\sim m^{2}  ~\Leftrightarrow~
\calI (m^{1})=\calI (m^{2}) ~\Leftrightarrow~ F(m^{1})=F(m^{2})$.\\
Thus, the equivalence classes of $\sim$ are relative interiors of faces of $\caK$. In other words,
$$
\forall\,  m^{1},m^{2}\in\caK\qquad m^{1}\sim m^{2}  ~\Leftrightarrow~
\left[\, \exists\, F\in\caF \quad m^{1},m^{2}\in\inte (F)\,\right]\,.
$$
\end{cor}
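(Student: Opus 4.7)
My plan is to derive all three equivalences from Observation~\ref{obs.face} together with a standard fact from polyhedral geometry that is presumably recorded in the Appendix: for a polyhedral cone, $F(m)$ equals the unique face whose relative interior contains $m$, and consequently $m\in \inte(F)$ iff $F(m)=F$.

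First, the equivalence $m^{1}\sim m^{2} \Leftrightarrow \calI(m^{1})=\calI(m^{2})$ is almost immediate from Definition~\ref{def.CI-induced}: the defining biconditional of $\sim$ says that for every elementary triplet $\langle a,b|C\rangle$ the scalar product $\langle m^{1},u_{\langle a,b|C\rangle}\rangle$ vanishes iff $\langle m^{2},u_{\langle a,b|C\rangle}\rangle$ does, which is exactly the equality of the induced independency models. I would just unfold the definitions.

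Next, for $\calI(m^{1})=\calI(m^{2})\Leftrightarrow F(m^{1})=F(m^{2})$ I would apply Observation~\ref{obs.face} twice, swapping the roles of $m^{1}$ and $m^{2}$. The forward direction yields $F(m^{1})\subseteq F(m^{2})$ and $F(m^{2})\subseteq F(m^{1})$, hence equality; the reverse direction is obtained the same way by reading the observation in both directions.

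Finally, for the reformulation in terms of relative interiors, I would invoke the polyhedral-geometry fact that every point of a cone lies in the relative interior of exactly one face, namely the smallest face containing it, so $m\in\inte(F(m))$ and $m\in\inte(F)$ forces $F=F(m)$. Combining: if $F(m^{1})=F(m^{2})=:F$, then both $m^{1}$ and $m^{2}$ lie in $\inte(F)$; conversely, if there is some $F\in\caF$ with $m^{1},m^{2}\in\inte(F)$, then $F(m^{1})=F=F(m^{2})$. No step here looks like a real obstacle; the only thing to be careful about is to cite (or quickly recall) the characterization of the relative interior of a face as the set of points whose minimal containing face is that face. If the Appendix does not make this explicit, I would need to insert one sentence justifying it, e.g.\ via the observation that the relative interiors of the faces of $\caK$ partition $\caK$.
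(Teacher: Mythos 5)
Your proposal is correct and follows essentially the same route as the paper: the first equivalence is definitional, the second is Observation~\ref{obs.face} applied in both directions, and the relative-interior reformulation uses exactly the fact the paper cites, namely that $\inte(F)$ is the set of $m\in\caK$ with $F(m)=F$ (which, with the Appendix's definition of $\inte(F)$ as $F$ minus all proper sub-faces, needs no extra argument since $F(m)\subseteq F$ and $F(m)\not\subset F$ force $F(m)=F$).
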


\begin{proof}
Indeed, for $F\in\caF$, $\inte (F)$ is the set of $m\in\caK$ such that
$F(m)=F$.
\end{proof}

\medskip
\begin{defin}\rm\label{def.extreme}
We say that a supermodular function $m\in\caK$ is {\em extreme\/} if $F(m)$ is the atom of the
lattice $(\caF,\subseteq)$.
\end{defin}

In other words, $m$ is extreme if it belongs to the relative interior of an atomic face of $\caF$, that is, of a face of $\caF$ of the dimension $\dim(\caL)+1=n+2$.

\begin{obser}\rm\label{obs.isom-stan} ~~\\
Let $\caS$ be a complementary space to $\caL$ in ${\dv R}^{\caP}$ and
let $\caFS$ denote the lattice of non-empty faces of the cone $\caKS :=\caK\cap\caS$. The lattice $(\caFS ,\subseteq )$ is isomorhic to the lattice $(\caF ,\subseteq )$. The correspondence is as follows:
$$
\mbox{every}~ F\in\caF \quad \mbox{has the form $F=F_{\calS}+\caL$ where $F_{\calS}\in\caFS $}.
$$
The inverse relation is $F_{\calS}=F\cap\caS$.
\end{obser}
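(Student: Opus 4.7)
The plan is to exploit the fact, already recorded in the paper, that the facet-defining inequalities for $\caK$ are precisely $\langle m, u_{\langle a,b|C\rangle}\rangle \geq 0$ for $\langle a,b|C\rangle \in \caE$, together with the key observation that every modular $l \in \caL$ is orthogonal to each elementary imset, i.e.\ $\langle l, u_{\langle a,b|C\rangle}\rangle = 0$. This is immediate from applying the modularity equality to the pair $a\cup C$, $b\cup C$. Consequently $\caL$ is contained in the lineality space of $\caK$, and each facet of $\caK$ is invariant under translation by $\caL$.

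From there, I would parametrize faces uniformly. Since every face of a polyhedral cone is an intersection of its facets, every $F \in \caF$ has the form
\begin{equation*}
F_T = \{\, m \in \caK : \langle m, u_t\rangle = 0 ~\text{for all}~ t \in T\,\}\qquad \text{for some } T \subseteq \caE,
\end{equation*}
and likewise every non-empty face of the pointed cone $\caKS$ has the form $F_T^{\calS} := F_T \cap \caS$ for some $T$ (the inequalities $\langle m, u_t\rangle \geq 0$ still define $\caKS$ inside $\caS$). The mutual inverse claim then reduces to verifying the two identities $F_T^{\calS} = F_T \cap \caS$ (trivial) and $F_T = F_T^{\calS} + \caL$.

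For the inclusion $F_T^{\calS} + \caL \subseteq F_T$, if $m_{\calS} \in F_T^{\calS}$ and $l \in \caL$, then $m_{\calS} + l \in \caK$ by $m_{\calS}+l \approx m_{\calS}$, and $\langle m_{\calS}+l, u_t\rangle = 0$ by the orthogonality from Step 1. For the reverse inclusion, given $m \in F_T$ take its unique decomposition $m = m_{\calS} + l$ with $m_{\calS} \in \caKS$, $l \in \caL$ afforded by $\caK = \caKS \oplus \caL$; the same orthogonality gives $\langle m_{\calS}, u_t\rangle = \langle m, u_t\rangle = 0$, so $m_{\calS} \in F_T^{\calS}$. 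This also shows $F_T \cap \caS$ is non-empty whenever $F_T$ is, so the two assignments $F \mapsto F \cap \caS$ and $F_{\calS} \mapsto F_{\calS} + \caL$ indeed exchange non-empty faces.

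The bijection preserves inclusion in both directions, since intersecting with $\caS$ and Minkowski-adding $\caL$ are each monotone. I do not anticipate a genuine obstacle: the only subtle point is verifying that $F_{\calS} + \caL$ really is a face of $\caK$ (not merely a $\caL$-saturated subset of one), which is handled cleanly by writing $F_{\calS} = F_T^{\calS}$ in terms of active elementary imsets and appealing to the orthogonality $\langle \caL, u_t\rangle = 0$, so that the same index set $T$ cuts out the corresponding face of $\caK$.
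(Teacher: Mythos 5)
Your proof is correct and follows essentially the same route as the paper: both arguments rest on the facts that faces of $\caK$ and of $\caKS$ are cut out by the elementary-imset equalities, that $\caL$ is orthogonal to every $u_{\langle a,b|C\rangle}$, and on the decomposition $\caK=\caKS\oplus\caL$. The only cosmetic difference is that you keep the index set $T\subseteq\caE$ of active facets, whereas the paper replaces the intersection of facets by a single equality $\langle m,v\rangle=0$ with $v$ in the conic hull $\caU\subseteq\caL^{\perp}$ of the elementary imsets.
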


\begin{proof}
Note that both $\caK$ and $\caKS$ is defined by means of inequalities
of the type $0\leq\langle m,u_{\langle a,b|C\rangle}\rangle$
with $\langle a,b|C\rangle\in\caE$ for $m$ in the respective linear space, which is either
${\dv R}^{\caP}$ or $\caS$.
Thus, any facet of $\caK$, respectively of $\caKS$, is determined by such an inequality.
Because \mbox{every} face is the intersection of facets, every face of $\caK$, respectively of $\caKS$, is determined by an inequality of the form
$0\leq\langle m, v\rangle$, where $v$ belongs to the conic hull
$\{ u_{\langle a,b|C\rangle} :~ \langle a,b|C\rangle\in\caE\,\}$, denoted by\, $\caU$. Note that\, $\caU$\/ is a subset of\/ ${\caL}^{\perp}$, the orthogonal complement of\, $\caL$.

Hence, for any $F\in\caF$, there exists $v\in\caU$ with $F=\{ m\in\caK :~
\langle m,v\rangle =0\}$. Then $\caL\subseteq F$, and, because $\caK =\caKS\oplus\caL$, one has $F=F_{\calS}+\caL$ where
$F_{\calS} :=F\cap\caS$ has the form of a face of\/ $\caKS$, namely
$F_{\calS}=\{ m\in\caKS :~ \langle m,v\rangle =0\}$.

Conversely, $R\in\caFS$ is of the form $R=\{ m\in\caKS :~ \langle m,v\rangle =0\}$ for $v\in\caU$. Then $\caL\subseteq\{ m\in\caK :~ \langle m,v\rangle =0\}$ implies
$R+\caL\subseteq \{ m\in\caK :~ \langle m,v\rangle =0\}$. Since the converse inclusion follows from the decomposition ${\dv R}^{\caP}=\caS\oplus\caL$, one has
$R+\caL =\{ m\in\caK :~ \langle m,v\rangle =0\}\equiv F\in\caF$. The fact that the correspondence respects the inclusion relation $\subseteq$ is trivial.
\end{proof}

\begin{cor}\rm\label{cor.isom-stan} ~~\\
Let $\caS$ be an arbitrary complementary space to $\caL$ in ${\dv R}^{\caP}$. Then $F\subseteq {\dv R}^{\caP}$ is an atom of the lattice $(\caF ,\subseteq )$ iff it has the form $F=R+\caL$, where $R$ is an extreme ray of the (pointed) cone $\caKS =\caK\cap\caS$.
\end{cor}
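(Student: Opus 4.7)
The plan is to deduce this corollary directly from the preceding Observation \ref{obs.isom-stan}, which already gives a lattice isomorphism between $(\caFS,\subseteq)$ and $(\caF,\subseteq)$ via $F\mapsto F\cap\caS$ with inverse $R\mapsto R+\caL$. Because lattice isomorphisms preserve the atom relation, it suffices to identify the bottom elements of both lattices and to recognize what the atoms of $\caFS$ look like.

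First I would observe that the bottom element of $(\caF,\subseteq)$ is $\caL$ itself: $\caL$ is a face of $\caK$ (it is the lineality space, carved out by requiring equality in every defining inequality $\langle m,u_{\langle a,b|C\rangle}\rangle\geq 0$), and every non-empty face contains $\caL$ because every face has the form $\{m\in\caK:\langle m,v\rangle=0\}$ with $v\in\caU\subseteq\caL^{\perp}$, which vanishes on $\caL$. Under the isomorphism, this bottom corresponds to $\caL\cap\caS=\{0\}$, which is indeed the bottom of $(\caFS,\subseteq)$ since $\caKS$ is pointed: $\caKS\cap(-\caKS)=\caK\cap(-\caK)\cap\caS=\caL\cap\caS=\{0\}$.

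Next I would note that in a pointed polyhedral cone the atoms of the face lattice are precisely the extreme rays (one-dimensional faces). Therefore $R\in\caFS$ is an atom of $(\caFS,\subseteq)$ if and only if $R$ is an extreme ray of $\caKS$. Transporting this back through the isomorphism of Observation \ref{obs.isom-stan}: $F\in\caF$ is an atom iff $F\cap\caS$ is an atom of $\caFS$, equivalently iff $F=R+\caL$ for some extreme ray $R$ of $\caKS$, which is the claim.

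The only subtle step is the verification that atoms of $\caFS$ really are extreme rays; this rests on pointedness of $\caKS$, which in turn comes from $\caS\cap\caL=\{0\}$, so there is no genuine obstacle. Everything else is a routine transcription of the isomorphism from the previous observation.
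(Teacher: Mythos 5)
Your proposal is correct and follows essentially the same route as the paper: transport atoms through the lattice isomorphism of Observation \ref{obs.isom-stan} and use pointedness of $\caKS$ to identify its atoms with its extreme rays. The only difference is that you explicitly verify the pointedness $\caKS\cap(-\caKS)=\caL\cap\caS=\{0\}$ and identify the bottom elements of both lattices, details the paper leaves implicit.
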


\begin{proof}
~ $F$ is an atom of $\caF$ iff it has the form $F=R+\caL$, where $R$ is an atom
of $\caFS$. Since $\caKS$ is pointed, the atoms of $\caKS$ are the extreme rays of $\caKS$.
\end{proof}

In other words, $m\in\caK$ is extreme (in sense of Definition \ref{def.extreme}) iff its standardized representative $m_{\calS}$ belongs to the relative interior of an extreme ray of\/ $\caKS$. Specifically, the corresponding equivalence class of\, $\sim$ has the form
$$
\{ r\in\caK :~ r\sim m\} =\{ \alpha\cdot m_{\calS}+l :~ \alpha >0 ~\&~ l\in\caL\,\}\,.
$$
The characterization of extreme supermodular functions is, therefore, equivalent to the characterization of extreme rays of\/ $\caKS$.
In particular, the extremality of a supermodular function can be expressed in terms of any standardization,
this concept does not depend on the choice of a standardization.
\medskip

\noindent {\em Remark}\, The above observation implies that, for extreme supermodular functions,
quantitative and qualitative equivalences are very close. In fact, the following can be considered
as a cryptic equivalent definition of extremality: $m\in\caK$ is extreme iff $m\not\in\caL$ and,
for any $r\in\caK$, one has $m\sim r ~\Leftrightarrow~ [\,\exists\,\alpha>0~~ \alpha\cdot m\approx r\,]$.
Indeed, in case of non-extreme non-modular $m\in\caK\setminus\caL$ write
$\mS =\sum_{i} \alpha_{i}\cdot m^{i}$ where $\alpha_{i}>0$ and $m^{i}$ are at least two generators of
extreme rays in $\caKS$. Then choose one of such $m^{j}$, put $r:=\mS+m^{j}$ and, because
of $m^{j}\in F(m)$, observe that $m\sim r$ while there is no $\alpha>0$ with $\alpha\cdot\mS=r$.
\medskip

\noindent {\em Remark}\, Further important observation is that a necessary condition for the extremality
(of a supermodular function) is the existence of an {\em integral\/} (= integer-valued) representative
in the respective class (of qualitative equivalence). To observe that consider the lower standardization:
the cone $\caKl$ is pointed and every $0\neq m\in\caKl$ satisfies $m(N)>0$. The (bounded) polyhedron
$\{\, m\in\caKl\, :\ m(K)\leq 1\,\}$ is specified by inequalities with rational coefficients. Therefore,
it is a polytope with vertices whose all components are rational numbers. Its non-zero vertices are generators
of extreme rays of $\caKl$ and each of them can be multiplied by a positive integer to obtain a vector
whose all components are integers, that is, an element of ${\dv Z}^{\caP}$.

\section{Self-transformations preserving supermodularity}\label{sec.self-transformations}

In the subsequent text we deal with transformations which preserve supermodularity,
mainly the linear ones, and discuss whether they preserve the extremality (of a supermodular function).
We also ask which of them can be interpreted as transformations of  equivalence classes of\,
$\approx$. This particular section is devoted to transformations of ${\dv R}^{\caP}$ into itself.

\subsection{Permutational transformations}\label{ssec.permutation}

\begin{definition}\rm\label{def.permutation}
Every permutation $\pi : N\to N$ (= a bijective mapping of $N$ onto itself)
can be extended to a bijective mapping $\bolpi$ of $\caP$ onto itself by the relation
$$
\bolpi (S) := \{\, \pi (i)\,:\ i\in S\}\quad
\mbox{for any $S\subseteq N$.}
$$
This step allows one to define a {\em permutational transformation\/} $T_{\pi}: {\dv R}^{\caP}\to
{\dv R}^{\caP}$:
$$
T_{\pi}: ~~ m\in {\dv R}^{\caP}\mapsto\, m\compo\bolpi\in {\dv R}^{\caP},\quad
\mbox{where\quad $m\compo\bolpi\,(S):= m(\bolpi (S))$~~ for $S\subseteq N$.}
$$
\end{definition}

Clearly, the permutational transformation is invertible, its inverse is given
by the inverse permutation: $(T_{\pi})^{-1}= T_{\pi^{-1}}$.
As $T_{\pi}$ maps $\caL$ to $\caL$, $m^{1}\approx m^{1} ~\Rightarrow~ m^{1}\compo\bolpi \approx m^{2}\compo\bolpi$
and $T_{\pi}$ can be viewed as a mapping between equivalence classes of\, $\approx$.

\begin{obser}\rm\label{obs.permut} ~~\\
Let $\pi : N\to N$ be a permutation and $m\in {\dv R}^{\caP}$.
Then
\begin{itemize}
\item[(i)] One has $m\in\caK$ iff $T_{\pi}(m)\equiv m\compo\bolpi\in\caK$ and the independency model
induced by $m\compo\bolpi$ is determined as follows:
\begin{equation}
a\ci b\,|\,C\,\,[m\compo\bolpi] ~~\Leftrightarrow~~
\pi (a)\ci \pi (b)\,|\,\bolpi (C)\,\,[m]
\qquad \mbox{for any $\langle a,b|C\rangle\in\caE$.}
\label{eq.CI-permut}
\end{equation}
In particular, $T_{\pi}$ maps $\caK$ onto $\caK$.
\item[(ii)] The transformation $T_{\pi}$ assigns faces of $\caK$ to faces of $\caK$ while
the inclusion is preserved: $G\subseteq F$ iff\/ $T_{\pi}(G)\subseteq T_{\pi}(F)$
for any $G,F\subseteq {\dv R}^{\caP}$.\\[0.2ex]
In particular, $m\in\caK$ is extreme in $\caK$ iff $T_{\pi}(m)\equiv m\compo\bolpi$ is extreme in $\caK$.
\end{itemize}
\end{obser}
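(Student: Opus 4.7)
The plan is to exploit that $T_{\pi}$ is a linear isomorphism of ${\dv R}^{\caP}$ (with two-sided inverse $T_{\pi^{-1}}$) and to make explicit how it acts on the elementary imsets through the scalar product. Once this action is in hand, both parts of the observation fall out by reading off the characterizations of $\caK$ and of its faces in terms of elementary imsets recorded in Section 3.

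For part (i), I would first derive the basic identity
\[
\langle m\compo\bolpi,\, u_{\langle a,b|C\rangle}\rangle
\;=\;
\langle m,\, u_{\langle \pi(a),\pi(b)|\bolpi(C)\rangle}\rangle
\qquad \text{for all $m\in{\dv R}^{\caP}$ and $\langle a,b|C\rangle\in\caE$.}
\]
A change of summation variable $S\mapsto\bolpi(S)$ converts $\langle m\compo\bolpi,u\rangle=\sum_{S} m(\bolpi S)\,u(S)$ into $\langle m,\,u\compo\bolpi^{-1}\rangle$, and direct inspection, using that $\bolpi$ commutes with unions, gives $u_{\langle a,b|C\rangle}\compo\bolpi^{-1} = u_{\langle \pi(a),\pi(b)|\bolpi(C)\rangle}$. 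Since $\langle a,b|C\rangle\mapsto\langle\pi(a),\pi(b)|\bolpi(C)\rangle$ is a bijection of $\caE$ onto itself, the characterization of supermodularity by non-negativity of all elementary scalar products yields $m\in\caK \Leftrightarrow T_{\pi}(m)\in\caK$, while the same identity read with "$=0$" instead of "$\geq 0$" gives the translation rule \eqref{eq.CI-permut} for $\calI(T_{\pi}(m))$. The "onto" claim is then immediate because $T_{\pi^{-1}}$ is the inverse of $T_{\pi}$ and also maps $\caK$ into $\caK$.

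For part (ii), I would appeal to the general principle that a linear automorphism of ${\dv R}^{\caP}$ that fixes $\caK$ setwise is automatically an automorphism of the face lattice. Concretely, the argument in the proof of Observation \ref{obs.face} shows that every $F\in\caF$ has the form $F=\{m\in\caK : \langle m,v\rangle=0\}$ for some $v\in\caU$. Extending the key identity of step one linearly in $v$ gives $T_{\pi}(F) = \{m\in\caK : \langle m,\,v\compo\bolpi^{-1}\rangle =0\}$, with $v\compo\bolpi^{-1}$ again in $\caU$; hence $T_{\pi}(F)\in\caF$. Inclusion is preserved trivially because $T_{\pi}$ is a bijection of ${\dv R}^{\caP}$. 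Commuting $T_{\pi}$ with $m\mapsto F(m)$ then yields $F(T_{\pi}m) = T_{\pi}(F(m))$, so $F(m)$ is an atom of $\caF$ iff $F(T_{\pi}m)$ is, which is exactly the extremality claim.

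The only genuine work lies in verifying the identity of step one, i.e.\ the bookkeeping of how elementary imsets transform under the lifting of $\pi$ to $\caP$; after that, both parts reduce to formal consequences of $T_{\pi}$ being a linear bijection that respects the defining inequalities of $\caK$.
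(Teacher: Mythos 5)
Your proof is correct and takes essentially the same route as the paper: the same key identity $\langle m\compo\bolpi,\, u_{\langle a,b|C\rangle}\rangle = \langle m,\, u_{\langle \pi(a),\pi(b)|\smbolpi(C)\rangle}\rangle$ (you obtain it by re-indexing the scalar product instead of expanding the four terms), combined with the bijection $\langle a,b|C\rangle\mapsto\langle\pi(a),\pi(b)|\bolpi(C)\rangle$ of $\caE$, gives (i), and the fact that $T_{\pi}$ carries faces of $\caK$ to faces (hence atoms to atoms) gives (ii), where the paper works with facets and their intersections while you use a single supporting vector $v\in\caU$ per face. Two harmless slips that do not affect the argument: the transformed face is cut out by $v\compo\bolpi$ rather than $v\compo\bolpi^{-1}$ (either vector lies in $\caU$), and the representation of an arbitrary face of $\caK$ by a single $v\in\caU$ is justified in the proof of Observation \ref{obs.isom-stan}, not in that of Observation \ref{obs.face}.
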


\begin{proof}
(i): since $T_{\pi}$ is invertible and its inversion is a mapping of the same type, namely $T_{\pi^{-1}}$, it is enough to show
$m\in\caK ~\Rightarrow~ m\compo\bolpi\in\caK$. Given $\langle a,b|C\rangle\in\caE$, one has
\begin{eqnarray}
\lefteqn{\langle m\compo\bolpi, u_{\langle a,b|C\rangle}\rangle =
m\compo\bolpi\,(a\cup b\cup C)+ m\compo\bolpi\,(C) -m\compo\bolpi\,(a\cup C) -m\compo\bolpi\,(b\cup C)} \nonumber\\
&=& m(\pi (a)\cup\pi (b)\cup\bolpi(C)) +m(\bolpi(C))
-m(\pi (a)\cup\bolpi(C)) -m(\pi (b)\cup\bolpi(C)) \nonumber\\
&=& \langle m, u_{\langle \pi (a),\pi (b)|\smbolpi (C)\rangle}\rangle\,,
\label{eq.permu-equal}
\end{eqnarray}
and the latter expression is non-negative because
$\langle \pi (a),\pi (b)|\bolpi (C)\rangle\in\caE$ and $m\in\caK$.
The formula \eqref{eq.permu-equal} also implies
\eqref{eq.CI-permut} because $i\ci j\,|\,K\,\,[\tilde{m}]$ means
$\langle \tilde{m}, u_{\langle i,j|K\rangle}\rangle=0$.\\[0.4ex]
(ii): the first step is to show that facets of $\caK$ are transformed by $T_{\pi}$ to facets
of $\caK$. The facets have the form $\{ m\in\caK \,:\ \langle m, u_{\langle i,j|K\rangle}\rangle =0\,\}$ for $\langle i,j|K\rangle\in\caE$. Realize that \eqref{eq.permu-equal} can be re-written
in the form $\langle m, u_{\langle i,j|K\rangle}\rangle=
\langle m\compo\bolpi, u_{\langle \pi^{-1}(i),\pi^{-1}(j)|\smbolpi^{-1}(K)\rangle}\rangle$ and obtain
\begin{eqnarray*}
\lefteqn{\hspace*{-0.9cm}T_{\pi}(\{ m\in\caK :\ \langle m, u_{\langle i,j|K\rangle}\rangle =0\}) =
}\\
&=&\{ m\compo\bolpi\in\caK :\ \langle m\compo\bolpi, u_{\langle \pi^{-1}(i),\pi^{-1}(j)|\smbolpi^{-1}(K)\rangle}\rangle  =0\}\\
&=& \{ r\in\caK :\ \langle r, u_{\langle \pi^{-1}(i),\pi^{-1}(j)|\smbolpi^{-1}(K)\rangle}\rangle  =0\}\,.
\end{eqnarray*}
Since $\langle i,j|K\rangle~\mapsto~\langle \pi^{-1}(i),\pi^{-1}(j)|\smbolpi^{-1}(K)\rangle$
is bijection of $\caE$ onto $\caE$, $T_{\pi}$ transforms facets of $\caK$ to themselves. Because every face of $\caK$ is the intersection of facets containing it and
$T_{\pi}(\bigcap_{\gamma} F_{\gamma})=\bigcap_{\gamma} T_{\pi}(S_{\gamma})$, faces of
$\caK$ come to faces of $\caK$. Since $T_{\pi}$ is bijective, the inclusion (of faces) is preserved.
Thus, the correspondence of faces of $\caK$ determined by $T_{\pi}$ is an auto-isomorphism of the face lattice\/ $\caF$.
The second claim in (ii) is an easy consequence: atoms of $\caF$ must be transformed to atoms of $\caF$.
\end{proof}

\noindent {\em Remark}\, Permutational transformations respect three main standardizations from \S\,\ref{sec.standardizations}:
$$
\forall\, m\in\caK\qquad
m_{\ell}\compo\bolpi =(m\compo\bolpi)_{\ell}, \quad
m_{u}\compo\bolpi =(m\compo\bolpi)_{u}, \quad
m_{o}\compo\bolpi =(m\compo\bolpi)_{o} \,.
$$
Indeed, it is easy to see that any complementary space $\caS$ among $\caSl$, $\caSu$ and $\caSo$ satisfies the following condition:
$$
\forall\, r\in {\dv R}^{\caP}\qquad r\in\caS ~~\Rightarrow ~~ r\compo\bolpi\in\caS\,.
$$
To show this implies $m_{\calS}\compo\bolpi=(m\compo\bolpi)_{\calS}$ for any
$m\in\caK$ consider a decomposition $m=m_{\calS}+l$ with $l\in\caL$,
using linearity of $T_{\pi}$ write $m\compo\bolpi=m_{\calS}\compo\bolpi+l\compo\bolpi$ and realize
$l\compo\bolpi\in\caL$. The uniqueness of the decomposition of $m\compo\bolpi$ implies
what is desired.

\subsection{Reflection transformation}\label{ssec.reflection}

\begin{definition}\rm\label{def.reflection}
We consider a reflection mapping $\boliota :\caP\to\caP$
defined by
$$
\boliota (S) := N\setminus S\qquad \mbox{for~ $S\subseteq N$,}
$$
which is a bijective mapping of\/ $\caP$ onto itself.\\[0.2ex]
This allows one to define the {\em reflection transformation\/}
$T_{\iota}: {\dv R}^{\caP}\to {\dv R}^{\caP}$:
\begin{eqnarray*}
\lefteqn{\hspace*{-1cm}T_{\iota}: ~~ m\in {\dv R}^{\caP}\mapsto\, m\compo\boliota\in {\dv R}^{\caP},}\\
&&\mbox{where\quad $m\compo\boliota\, (S):= m(\boliota (S))\equiv m(N\setminus S)$~~ for $S\subseteq N$.}
\end{eqnarray*}
Given $m\in {\dv R}^{\caP}$, its {\em reflection} is the function $T_{\iota}(m)\equiv m\compo\boliota$.
\end{definition}

Evidently, the reflection transformation is invertible, its inverse is itself: $(T_{\iota})^{-1}= T_{\iota}$.
Since $T_{\iota}$ maps $\caL$ to $\caL$, $m^{1}\approx m^{1}$ implies $m^{1}\compo\boliota \approx m^{2}\compo\boliota$
and the reflection can be interpreted as a transformation of equivalence classes of\, $\approx$.

\begin{obser}\rm\label{obs.reflect} ~~ 
Assume $m\in {\dv R}^{\caP}$. Then
\begin{itemize}
\item[(i)] One has $m\in\caK$ iff $T_{\iota}(m)\equiv m\compo\boliota\in\caK$
and the independency model induced by $m\compo\boliota$ is determined as follows:
\begin{equation}
a\ci b\,|\,C\,\,[m\compo\boliota] ~~\Leftrightarrow~~
a\ci b\,\,|\,\,N\setminus (a\cup b\cup C)\,\,[m]
\qquad \mbox{for any $\langle a,b|C\rangle\in\caE$.}
\label{eq.CI-reflect}
\end{equation}
In particular, $T_{\iota}$ maps $\caK$ onto $\caK$.
\item[(ii)] The transformation $T_{\iota}$ assigns faces of $\caK$ to
faces of $\caK$ while the inclusion is preserved: $G\subseteq F$ iff\/ $T_{\iota}(G)\subseteq T_{\iota}(F)$
for any $G,F\subseteq {\dv R}^{\caP}$.\\[0.2ex]
In particular, $m\in\caK$ is extreme in $\caK$ iff $T_{\iota}(m)\equiv m\compo\boliota$ is extreme in $\caK$.
\end{itemize}
\end{obser}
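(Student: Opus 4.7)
The proof will closely parallel that of Observation \ref{obs.permut}, with the key algebraic step being an identity that expresses scalar products against elementary imsets after reflection as scalar products against another elementary imset before reflection.

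\textbf{Step 1 (key identity).} For an elementary triplet $\langle a,b|C\rangle\in\caE$, set $D:=N\setminus (a\cup b\cup C)$; then $D$ is disjoint from $\{a,b\}\cup C$, and by direct computation
\begin{eqnarray*}
\boliota(a\cup b\cup C) &=& D, \qquad \boliota(C) \;=\; D\cup a\cup b,\\
\boliota(a\cup C) &=& D\cup b, \qquad \boliota(b\cup C) \;=\; D\cup a.
\end{eqnarray*}
Substituting these into the definition of $\langle m\compo\boliota, u_{\langle a,b|C\rangle}\rangle$ and using that $\langle a,b|D\rangle\in\caE$ yields
$$
\langle m\compo\boliota, u_{\langle a,b|C\rangle}\rangle
\;=\; \langle m, u_{\langle a,b|\,N\setminus(a\cup b\cup C)\rangle}\rangle\,.
$$
This is the analogue of equation \eqref{eq.permu-equal} for the reflection.

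\textbf{Step 2 (proof of (i)).} Since $T_{\iota}$ is its own inverse, it suffices to argue one direction. The map $\langle a,b|C\rangle \mapsto \langle a,b|\,N\setminus(a\cup b\cup C)\rangle$ is an involutive bijection of $\caE$ onto itself. Therefore the identity from Step 1 gives $\langle m\compo\boliota, u_{\langle a,b|C\rangle}\rangle\geq 0$ for all $\langle a,b|C\rangle\in\caE$ whenever $m\in\caK$, which by the facet description of $\caK$ yields $m\compo\boliota\in\caK$. The same identity, read as an equality of zeros, proves \eqref{eq.CI-reflect}.

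\textbf{Step 3 (proof of (ii)).} I would copy the argument from Observation \ref{obs.permut}(ii) almost verbatim. Each facet of $\caK$ has the form $\{m\in\caK : \langle m,u_{\langle a,b|C\rangle}\rangle=0\}$; using Step 1 and the fact that $T_{\iota}$ is linear and bijective one checks that the image of such a facet under $T_{\iota}$ is the facet associated with the triplet $\langle a,b|\,N\setminus(a\cup b\cup C)\rangle$. Because the triplet map above is a bijection of $\caE$, $T_{\iota}$ permutes the facets of $\caK$. Since every face is an intersection of facets and $T_{\iota}(\bigcap_{\gamma} F_{\gamma})=\bigcap_{\gamma} T_{\iota}(F_{\gamma})$, it follows that $T_{\iota}$ induces an auto-isomorphism of the face lattice $\caF$ preserving inclusion; in particular, atoms go to atoms, giving the extremality claim.

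The only real work is Step 1, and that is a short set-theoretic computation rather than a conceptual obstacle; after it is in place, the face-lattice reasoning in Step 3 is a direct transcription of the permutational argument, simply replacing the triplet relabelling $\langle i,j|K\rangle\mapsto \langle \pi^{-1}(i),\pi^{-1}(j)|\bolpi^{-1}(K)\rangle$ by $\langle a,b|C\rangle\mapsto \langle a,b|\,N\setminus(a\cup b\cup C)\rangle$.
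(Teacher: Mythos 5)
Your proposal is correct and follows essentially the same route as the paper: the identity $\langle m\compo\boliota, u_{\langle a,b|C\rangle}\rangle=\langle m, u_{\langle a,b|N\setminus(a\cup b\cup C)\rangle}\rangle$ is exactly the paper's equation \eqref{eq.reflect-equal}, and the facet-permutation argument for (ii) matches the paper's as well. No gaps.
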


Note that the last fact from Observation \ref{obs.reflect}(ii) was noticed
in \cite[\S\,5.1.3]{stuboukoc00}, \cite[Lemma\,4.1]{KSTT12}.

\begin{proof}
(i): since $T_{\iota}$ is self-invertible, it is enough to show
$m\in\caK ~\Rightarrow~ m\compo\boliota\in\caK$. Given
$\langle a,b|C\rangle\in\caE$ and $D:= N\setminus (a\cup b\cup C)$, one has
\begin{eqnarray}
\langle m\compo\boliota, u_{\langle a,b|C\rangle}\rangle &=&
m\compo\boliota\, (a\cup b\cup C)+ m\compo\boliota\, (C) -m\compo\boliota\, (a\cup C)
-m\compo\boliota\, (b\cup C) \nonumber\\
&=& m(D) +m(a\cup b\cup D) -m(b\cup D) -m(a\cup D) \nonumber\\
&=& \langle m, u_{\langle a,b|D\rangle}\rangle =
\langle m, u_{\langle a,b|N\setminus (a\cup b\cup C)\rangle}\rangle\,,
\label{eq.reflect-equal}
\end{eqnarray}
and the latter expression is non-negative because
$\langle a,b|D\rangle\in\caE$ and $m\in\caK$.
The formula \eqref{eq.reflect-equal} also implies
\eqref{eq.CI-reflect} because $i\ci j\,|\,K\,\,[\tilde{m}]$ means
$\langle \tilde{m}, u_{\langle i,j|K\rangle}\rangle=0$.\\[0.4ex]
(ii): we first show that facets of $\caK$, having the form
$\{ m\in\caK \,:\ \langle m, u_{\langle i,j|K\rangle}\rangle =0\,\}$
for $\langle i,j|K\rangle\in\caE$, are transformed by $T_{\iota}$ to facets of $\caK$.
Realize that \eqref{eq.reflect-equal} also says $\langle m, u_{\langle i,j|K\rangle}\rangle =
\langle m\compo\boliota, u_{\langle i,j|N\setminus (i\cup j\cup K)}\rangle$ and obtain
\begin{eqnarray*}
\lefteqn{\hspace*{-3.4cm}T_{\iota}(\{ m\in\caK :\ \langle m, u_{\langle i,j|K\rangle}\rangle =0\}) =
\{ m\compo\boliota\in\caK :\ \langle m\compo\boliota, u_{\langle i,j|N\setminus (i\cup j\cup K)\rangle}\rangle  =0\}}\\
&=& \{ r\in\caK :\ \langle r, u_{\langle i,j|N\setminus (i\cup j\cup K)\rangle}\rangle  =0\}\,.
\end{eqnarray*}
Since $\langle i,j|K\rangle~\mapsto~\langle i,j\,|\,N\setminus (i\cup j\cup K)\rangle$
is bijection of $\caE$ onto $\caE$, $T_{\iota}$ transforms facets of $\caK$ to themselves.
Because every face of $\caK$ is the intersection of facets containing it and
$T_{\iota}$ is bijective, faces of $\caK$ come to faces of $\caK$ while their inclusion
is preserved. Hence, the correspondence of faces of $\caK$ given by $T_{\iota}$ is an auto-isomorphism
of the face lattice\/ $\caF$ and the second claim in (ii) is evident.
\end{proof}

\noindent {\em Remark}\, The reflection transformation behaves to the standardizations from \S\,\ref{sec.standardizations} as follows:
$$
\forall\, m\in\caK\qquad
m_{\ell}\compo\boliota =(m\compo\boliota)_{u}, \quad
m_{u}\compo\boliota =(m\compo\boliota)_{\ell}, \quad
m_{o}\compo\boliota =(m\compo\boliota)_{o} \,.
$$
Indeed, it is easy to see that
$$
\forall\, r\in {\dv R}^{\caP}\qquad r\in\caSl ~\Leftrightarrow ~ r\compo\boliota\in\caSu, \quad r\in\caSo ~\Leftrightarrow ~ r\compo\boliota\in\caSo \,.
$$
To show the former equivalence implies $m_{\ell}\compo\boliota =(m\compo\boliota)_{u}$ for any $m\in\caK$ write
$m=m_{\ell}+l$ with $l\in\caL$ and by linearity of $T_{\iota}$ get
$m\compo\boliota =m_{\ell}\compo\boliota +l\compo\boliota$.
Since $m_{\ell}\compo\boliota\in\caSu$ and $l\compo\boliota\in\caL$,
the uniqueness of the decomposition of $m\compo\boliota$ with respect to $\caSu$
implies what is needed. The proof of the other formulas is analogous.

\subsection{Monotonizations}

There are transformations of $\caK$ into itself which are not injective
but ascribe monotone (supermodular) functions to supermodular functions.

\subsubsection{Lower and upper standardizations as linear monotonizations}
In fact, the $\ell$-standardization from \S\,\ref{sec.standardizations}
can be viewed as a linear transformation from $\caK$ to $\caK$
which ascribes a non-decreasing function $m_{\ell}$ to any $m\in\caK$.
Analogously, the $u$-standardization is a linear transformation from $\caK$ to $\caK$ which ascribes a non-increasing function $m_{u}$ to $m\in\caK$.

It follows from arguments in \S\,\ref{sec.qual-extreme} that these two transformations preserve extremality, that is,
they ascribe extreme supermodular functions to extreme ones. However, since they assign quantitatively equivalent
functions to supermodular functions they are not useful if one is interested in generating new (= different)
types of extreme supermodular functions.

\subsubsection{Non-linear maximum-based monotonizations}
There are also non-linear transformations of\/ $\caK$ into itself which
ascribe monotone functions to functions from $\caK$; they are defined through
{\em maximization\/} over the class of subsets, respectively of supersets.
Specifically, given $m\in\caK$, let us put
\begin{eqnarray*}
m^{\max\subseteq}(S) &:=&
\max\,\{\, m(T)\,:\ T\subseteq S\}\qquad
\mbox{for $S\subseteq N$,~~ and }\\
m^{\max\supseteq}(S) &:=&
\max\,\{\, m(T)\,:\ T\supseteq S\}\qquad
\mbox{for $S\subseteq N$.}
\end{eqnarray*}
Analogues of these transformations in the context of submodular functions have been pinpointed
in \cite[Prop.\,19,\,20]{bac10}, where mimimization over subsets/supersets is applied instead.

\begin{obser}\rm\label{obs.monotone} ~~\\
If $m\in\caK$ then
$m^{\max\subseteq}$ is non-decreasing function in $\caK$ and
$m^{\max\supseteq}$ is non-increasing function in $\caK$ .
\end{obser}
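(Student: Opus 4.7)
The plan is to handle monotonicity and supermodularity separately, with the same short argument working for both transformations.

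For monotonicity, I note that if $S \subseteq S'$, then the collection $\{T : T\subseteq S\}$ is contained in $\{T : T\subseteq S'\}$, so the maximum can only grow; this gives that $m^{\max\subseteq}$ is non-decreasing. Symmetrically, $\{T : T\supseteq S\} \supseteq \{T : T\supseteq S'\}$ when $S \subseteq S'$, so $m^{\max\supseteq}$ is non-increasing. These are both one-liners.

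For supermodularity of $m^{\max\subseteq}$, given $A,B\subseteq N$, I would pick witnesses $T_{A}\subseteq A$ and $T_{B}\subseteq B$ with $m(T_{A})=m^{\max\subseteq}(A)$ and $m(T_{B})=m^{\max\subseteq}(B)$. The key observation is that $T_{A}\cup T_{B}\subseteq A\cup B$ and $T_{A}\cap T_{B}\subseteq A\cap B$ (the latter follows from $T_{A}\subseteq A$ and $T_{B}\subseteq B$). Applying supermodularity of $m$ to the pair $T_{A},T_{B}$,
\[
m^{\max\subseteq}(A)+m^{\max\subseteq}(B)=m(T_{A})+m(T_{B})\leq m(T_{A}\cup T_{B})+m(T_{A}\cap T_{B}),
\]
and each term on the right is dominated by $m^{\max\subseteq}(A\cup B)$ and $m^{\max\subseteq}(A\cap B)$ respectively. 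The proof for $m^{\max\supseteq}$ is the mirror image: choose $T_{A}\supseteq A$, $T_{B}\supseteq B$ attaining the maxima, note $T_{A}\cup T_{B}\supseteq A\cup B$ and $T_{A}\cap T_{B}\supseteq A\cap B$, and conclude by the same chain of inequalities.

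There is no serious obstacle; the only thing to notice (so that one does not get confused by the direction of the set inclusion) is that in both cases the witnesses $T_{A},T_{B}$ automatically satisfy the correct containment relations with $A\cup B$ and $A\cap B$, which is exactly what makes supermodularity of $m$ transfer to the monotonized version.
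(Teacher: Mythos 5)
Your proposal is correct and is essentially the paper's own argument: pick maximizing witnesses $A'\subseteq A$, $B'\subseteq B$, apply supermodularity of $m$ to them, and use $A'\cup B'\subseteq A\cup B$, $A'\cap B'\subseteq A\cap B$ to dominate the right-hand side by the monotonized function. The only (harmless) difference is that you also spell out the one-line monotonicity argument, which the paper leaves implicit.
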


\begin{proof}
Given $A,B\subseteq N$, there exist $A^{\prime}\subseteq A$ and
$B^{\prime}\subseteq B$ such that $m^{\max\subseteq}(A)=m(A^{\prime})$ and
$m^{\max\subseteq}(B)=m(B^{\prime})$. Thus, we write using supermodularity of $m$:
\begin{eqnarray*}
m^{\max\subseteq}(A)+ m^{\max\subseteq}(B) &=& m(A^{\prime})+m(B^{\prime})
~\leq~ m (A^{\prime}\cup B^{\prime}) + m (A^{\prime}\cap B^{\prime})\\
&\leq& \max\,\{\, m(T) : T\subseteq A\cup B\,\} +
\max\,\{\, m(T) : T\subseteq A\cap B\,\}\\
&=&
m^{\max\subseteq}(A\cup B) + m^{\max\subseteq}(A\cap B)\,,
\end{eqnarray*}
because $A^{\prime}\cup B^{\prime}\subseteq A\cup B$ and
$A^{\prime}\cap B^{\prime}\subseteq A\cap B$.
The proof for $m^{\max\supseteq}$ is analogous.
\end{proof}

The above-defined maximum-based monotonizations can be viewed as
mutually dual; specifically, they are reflections of each other in the sense
$$
(m\compo\boliota)^{\max\subseteq}= m^{\max\supseteq}\compo\boliota
~~\mbox{and}~~ (m\compo\boliota)^{\max\supseteq}= m^{\max\subseteq}\compo\boliota
\qquad \mbox{for every $m\in\caK$.}
$$
However, they do not preserve extremality of a supermodular
function, although they may generate distinct types of extreme functions.

\begin{example}\label{exa.monot}\rm
Consider $N=\{ a,b,c\}$ and put
$$
m_{0} ~:=~ 2\cdot\delta_{\{ a,b,c\}} +\delta_{\{ a,b\}} +\delta_{\{ a,c\}} +\delta_{\{ b,c\}},
$$
which an extreme non-decreasing function in $\caKl$.
Quantitatively equivalent function $m_{1}:=m_{0}-\frac{1}{2}\cdot m^{a\subseteq}
-\frac{1}{2}\cdot m^{b\subseteq}\approx m_{0}$ is not non-decreasing.
The monotonization of $m_{1}$ (the maximization over subsets) gives a non-extreme non-decreasing supermodular function
$$
m_{1}^{\max\subseteq} = \delta_{\{ a,b,c\}} +\frac{1}{2}\cdot\delta_{\{ a,c\}}
+\frac{1}{2}\cdot\delta_{\{ b,c\}} =
\frac{1}{2}\cdot(\delta_{\{ a,b,c\}} +\delta_{\{ a,c\}}) +\frac{1}{2}\cdot(\delta_{\{ a,b,c\}} +\delta_{\{ b,c\}}).
$$
On the other hand, putting $m_{2} := m_{0}-m^{c\subseteq}$ results in
$m_{2}^{\max\subseteq} =\delta_{\{ a,b,c\}} + \delta_{\{ a,b\}}$, which is an extreme function in $\caKl$,
but of another permutation type than the original $m_{2}\approx m_{0}$.
To complete the picture note that putting $m_{3} := m_{0}-\frac{2}{3}\cdot m^{a\subseteq} -\frac{2}{3}\cdot m^{b\subseteq} -\frac{2}{3}\cdot m^{c\subseteq}$ gives
$m_{3}^{\max\subseteq}\equiv 0$, which is a modular function.
\end{example}
\smallskip

The above example shows that $m\approx r \not\Rightarrow m^{\max\subseteq}\approx  r^{\max\subseteq}$;
thus, the monotonization cannot be interpreted as a mapping between equivalence classes of\, $\approx$.
\medskip

\noindent {\em Remark}\,
The observation that the maximum-based monotonization can ``generate"
a different extreme type from a given extreme type (of supermodular functions)
leads to the following idea. Let us introduce a binary relation $\rightsquigarrow$ between
$\sim$-equivalence classes:
$$
\mbox{for $m,r\in\caK$,}\qquad m\rightsquigarrow r ~~\mbox{if}\quad
[\,\exists\, m_{*}\approx m ~~\mbox{such that}~~ m_{*}^{\max\subseteq}\sim r\,].
$$
The question is whether this relation, respectively its transitive closure, offers a sensible complexity
comparison between distinct extreme types of supermodular functions. For example, in case
$N=\{ a,b,c\}$ one has $m_{0}\rightsquigarrow \delta_{N}$ for $m_{0}$ from Example
\ref{exa.monot} while $\neg [\delta_{N}\rightsquigarrow m_{0}]$; the same holds
for $m^{\{a,b\}\subseteq}$ in place of $\delta_{N}$.


\subsection{Outer composition and multiplication}
An example of a transformation preserving supermodularity in the $\ell$-standardized
frame is the outer composition with certain convex functions. An example of a binary operation
on $\caKl$ is pointwise multiplication. These facts follow from the next observation,
mentioned in more general context in \cite[Lemma\,\,2.6.4]{top98}.

\begin{lem}\rm \label{lem.outer-compo}
Let $m,r\in\caKl$ and $g:[0,\infty)\times[0,\infty)\to {\dv R}$ be a real function of two
variables, whose one-variable sections are non-decreasing and convex on $[0,\infty )$ and which satisfies
\begin{equation}
\forall\, x\leq x^{\prime},\ y\leq y^{\prime}\qquad
 g(x^{\prime},y) + g(x,y^{\prime})\leq g(x^{\prime},y^{\prime}) + g(x,y)\,.
\label{eq.outer-compo}
\end{equation}
Then the composed function $S\subseteq N\mapsto g(m(S),r(S))$ is supermodular on $\caP$.\\
In particular, if $h:[0,\infty)\rightarrow {\dv R}$ is a non-decreasing convex function with $h(\emptyset )=0$
then $m\in\caKl ~\Rightarrow~ (h\compo m)\in\caKl$.
\end{lem}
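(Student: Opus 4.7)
The plan is to fix $A, B \subseteq N$ and verify the single supermodularity inequality
$$
g(m(A), r(A)) + g(m(B), r(B)) \leq g(m(A\cup B), r(A\cup B)) + g(m(A\cap B), r(A\cap B)).
$$
Writing $m_{A}, m_{B}, m_{\cup}, m_{\cap}$ and $r_{A}, r_{B}, r_{\cup}, r_{\cap}$ for the eight relevant values, and using that $m, r \in \caKl$ are non-decreasing, non-negative, and supermodular, one has $m_{\cap} \leq m_{A}, m_{B} \leq m_{\cup}$ together with $m_{A} + m_{B} \leq m_{\cup} + m_{\cap}$, and analogously for $r$.

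First I would dispose of the ``disaligned'' case. WLOG $m_{A} \leq m_{B}$. If $r_{B} < r_{A}$, a single application of \eqref{eq.outer-compo} to $(x, x', y, y') = (m_{A}, m_{B}, r_{B}, r_{A})$ gives
$$
g(m_{A}, r_{A}) + g(m_{B}, r_{B}) \leq g(m_{A}, r_{B}) + g(m_{B}, r_{A}),
$$
and the new pair of points $(m_{A}, r_{B}), (m_{B}, r_{A})$ is coordinatewise ordered and still satisfies all the order and sum constraints with respect to $(m_{\cap}, r_{\cap})$ and $(m_{\cup}, r_{\cup})$. So it suffices to treat the aligned case $m_{\cap} \leq m_{A} \leq m_{B} \leq m_{\cup}$, $r_{\cap} \leq r_{A} \leq r_{B} \leq r_{\cup}$.

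In the aligned case, set $\beta := m_{A} - m_{\cap}$, $\gamma := m_{B} - m_{\cap}$, $\beta' := r_{A} - r_{\cap}$, $\gamma' := r_{B} - r_{\cap}$ (all non-negative). Since the sections of $g$ are non-decreasing and $m_{\cup} \geq m_{\cap} + \beta + \gamma$, $r_{\cup} \geq r_{\cap} + \beta' + \gamma'$, it suffices to prove
$$
g(m_{\cap} + \beta, r_{\cap} + \beta') + g(m_{\cap} + \gamma, r_{\cap} + \gamma') \leq g(m_{\cap} + \beta + \gamma, r_{\cap} + \beta' + \gamma') + g(m_{\cap}, r_{\cap}),
$$
which is exactly $\phi(\beta, \beta') \geq \phi(0, 0)$ for $\phi(s, t) := g(m_{\cap} + s + \gamma, r_{\cap} + t + \gamma') - g(m_{\cap} + s, r_{\cap} + t)$. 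The main obstacle, and the one place where all three hypotheses on $g$ are used simultaneously, is to show that $\phi$ is non-decreasing in each variable on $[0, \infty)^{2}$. For $s \leq s'$, I would chain two inequalities: first, convexity of $x \mapsto g(x, r_{\cap} + t)$ allows translating the interval $[m_{\cap} + s, m_{\cap} + s']$ rightward by $\gamma \geq 0$ without decreasing the $x$-increment (convex functions have non-decreasing finite differences); second, the ``increasing differences'' reformulation of \eqref{eq.outer-compo} allows raising the second argument from $r_{\cap} + t$ to $r_{\cap} + t + \gamma'$ without decreasing that $x$-increment. Monotonicity of $\phi$ in $t$ is entirely symmetric, and together they deliver $\phi(\beta, \beta') \geq \phi(0, 0)$.

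For the ``In particular'' clause, take $g(x, y) := h(x)$: the $y$-sections are constant and non-decreasing, the $x$-sections coincide with the non-decreasing convex function $h$, and \eqref{eq.outer-compo} holds with equality, so the main statement applied with any $r \in \caKl$ yields supermodularity of $h \circ m$. Finally, the assumption $h(0) = 0$ together with $m(\emptyset) = m(i) = 0$ for every $i \in N$ (since $m \in \caKl$) forces $(h\circ m)(\emptyset) = (h\circ m)(i) = 0$, so $h \circ m \in \caKl$.
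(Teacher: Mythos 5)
Your argument is correct, but it is organized differently from the paper's proof, so a brief comparison is in order. The paper fixes the intermediate values $\bar{m}=m(A\cup B)+m(A\cap B)-m(A)$ and $\bar{r}=r(A\cup B)+r(A\cap B)-r(A)$ and then sums six hand-picked inequalities -- two instances of \eqref{eq.outer-compo}, two convexity (non-decreasing increments) inequalities and two monotonicity inequalities -- whose terms telescope to exactly the supermodularity inequality for $S\mapsto g(m(S),r(S))$. You instead isolate a standalone statement about $g$ alone: anchoring at the base point $(m(A\cap B),r(A\cap B))$ and introducing the increment function $\phi(s,t)=g(m(A\cap B)+s+\gamma,\,r(A\cap B)+t+\gamma')-g(m(A\cap B)+s,\,r(A\cap B)+t)$, you show $\phi$ is non-decreasing in each variable (convexity of a section to shift the increment, then \eqref{eq.outer-compo} to raise the other coordinate), which yields the superadditivity-type inequality
$g(u+\beta,v+\beta')+g(u+\gamma,v+\gamma')\leq g(u+\beta+\gamma,v+\beta'+\gamma')+g(u,v)$ for all non-negative $u,v,\beta,\gamma,\beta',\gamma'$; supermodularity and monotonicity of $m,r$ together with monotonicity of the sections of $g$ then absorb the slack $m(A\cup B)\geq m(A)+m(B)-m(A\cap B)$ at the very end. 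The same three hypotheses on $g$ are used, but your decomposition is more modular: the core inequality concerns $g$ only and is proved once at generic points, whereas the paper verifies the inequality directly for the eight specific values, which is shorter but less transparent about where each hypothesis enters. One remark: your preliminary ``disaligned'' reduction (the swap of $r(A)$ and $r(B)$ via \eqref{eq.outer-compo}) is harmless but superfluous -- your $\phi$-monotonicity argument never uses the coordinatewise alignment of the two points, only non-negativity of $\beta,\gamma,\beta',\gamma'$, so the aligned-case computation already covers the general case. The treatment of the ``in particular'' clause ($g(x,y):=h(x)$, with the $\ell$-standardization checked from $h(0)=0$) matches the paper's.
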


\begin{proof}
Consider $A,B\subseteq N$ and the goal is to show
$$
0\leq g(m(A\cup B),r(A\cup B))  -g(m(A),r(A))
- g(m(B),r(B)) +g(m(A\cap B),r(A\cap B)).
$$
Let us denote $\bar{m}:= m(A\cup B)+m(A\cap B)-m(A)$ and $\bar{r}:= r(A\cup B)+r(A\cap B)-r(A)$;
since $m$ and $r$ are both supermodular and non-decreasing one has $m(B)\leq \bar{m}\leq m(A\cup B)$ and
$r(B)\leq \bar{r}\leq r(A\cup B)$.
The goal is reached by summing the following six inequalities:
\begin{eqnarray*}
\mbox{(a)}~~~0&\leq& g(m(A\cup B),r(A\cup B))  -g(m(A\cup B),\bar{r})
- g(m(A),r(A\cup B)) +g(m(A),\bar{r}),\\
\mbox{(b)}~~~0&\leq& g(m(A\cup B),\bar{r})  -g(m(A\cup B),r(A\cap B))
-g(\bar{m},\bar{r}) +g(\bar{m},r(A\cap B)),\\
\mbox{(c)}~~~0&\leq& g(m(A),r(A\cup B))  -g(m(A),\bar{r})
- g(m(A),r(A)) +g(m(A),r(A\cap B)),\\
\mbox{(d)}~~~0&\leq& g(m(A\cup B),r(A\cap B)) -g(\bar{m},r(A\cap B))\\
&& \hspace*{1cm}-\,g(m(A),r(A\cap B)) +g(m(A\cap B),r(A\cap B)),\\
\mbox{(e)}~~~0&\leq& g(\bar{m},\bar{r})  -g(\bar{m},r(B)),\\
\mbox{(f)}~~~0&\leq& g(\bar{m},r(B)) -g(m(B),r(B)).
\end{eqnarray*}
Indeed, most of the terms cancel after summing;
specifically, $\mp g(m(A\cup B),\bar{r})$ in (a) and (b),
$\mp g(m(A),r(A\cup B)) \pm g(m(A),\bar{r})$ in (a) and (c),
$\mp g(m(A\cup B),r(A\cap B)) \pm g(\bar{m},r(A\cap B))$ in (b) and (d),
$\mp g(\bar{m},\bar{r})$ in (b) and (e),
$\pm g(m(A),r(A\cap B))$ in (c) and (d), and
$\mp g(\bar{m},r(B))$ in (e) and (f). It remains to observe the validity
of those inequalities (a)-(f).

The inequality (a) is \eqref{eq.outer-compo}  with $x=m(A)$, $x^{\prime}=m(A\cup B)$,
$y=\bar{r}$ and $y^{\prime}=r(A\cup B)$; note that $x\leq x^{\prime}$ and
$y\leq y^{\prime}$ because $m$ is non-decreasing with respect to inclusion.
The inequality (b) is \eqref{eq.outer-compo} with $x=\bar{m}$, $x^{\prime}=m(A\cup B)$,
$y=r(A\cap B)$ and $y^{\prime}=\bar{r}$; again $x\leq x^{\prime}$ and
$y\leq y^{\prime}$ for $r$ is non-decreasing.
The inequality (c) follows from the convexity of the function
$y\in [0,\infty )\mapsto \hat{g}(y):= g(m(A),y)$. Specifically, the convexity
of $\hat{g}$ means that it has non-decreasing increments, which can
be formulated in a symmetric way as follows:
$$
\forall\, y\in [0,\infty),~\Delta\geq 0, ~\delta\geq 0,\qquad
0\leq \hat{g}(y+\Delta +\delta)- \hat{g}(y+\Delta)-\hat{g}(y+\delta)+\hat{g}(y)\,.
$$
Thus, putting $y=r(A\cap B)$, $\Delta =r(A\cup B)-r(A)$ and $\delta = r(A)-r(A\cap B)$
results in (c). The inequality (d) follows from
the convexity of the function $x\in [0,\infty )\mapsto \tilde{g}(x):= g(x,r(A\cap B))$;
where, analogously, $x=m(A\cap B)$, $\Delta =m(A\cup B)-m(A)$, $\delta =m(A)-m(A\cap B)$.
Finally, (e) follows from the assumption that the function $y\in [0,\infty)\mapsto g(\bar{m},y)$
is non-decreasing and (f) from the fact that $x\in [0,\infty)\mapsto g(x,r(B))$ is non-decreasing.\\[0.3ex]
The second statement (about the function $h$ on $[0,\infty )$) follows easily from the
main one by putting $g(x,y):=h(x)$ for $x,y\geq 0$ and $r:=m$.
\end{proof}

\begin{cor}\rm\label{cor.multiplic}
Given $m^{1},m^{2}\in\caKl$, their pointwise multiple
$$
m^{3}(S) := m^{1}(S)\cdot m^{2}(S)\qquad \mbox{for $S\subseteq N$,}
$$
is also in $\caKl$. Analogously, if $m^{1},m^{2}\in\caKu$ then $m^{3}\in\caKu$.
\end{cor}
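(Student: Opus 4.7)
The plan is to obtain the $\ell$-part as a direct instance of Lemma~\ref{lem.outer-compo} with the bilinear choice $g(x,y):=x\cdot y$ on $[0,\infty)\times [0,\infty)$, and then to derive the $u$-part by transporting the $\ell$-statement through the reflection transformation $T_{\iota}$ of \S\,\ref{ssec.reflection}.

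For the $\ell$-case I would first check that $g(x,y)=x\cdot y$ meets the three hypotheses of Lemma~\ref{lem.outer-compo}. Each one-variable section $x\mapsto x\cdot y_{0}$ with $y_{0}\geq 0$ is affine non-decreasing on $[0,\infty)$, hence in particular non-decreasing and convex; the same holds for the $y$-section. The inequality \eqref{eq.outer-compo} collapses to $0\leq (x^{\prime}-x)\cdot(y^{\prime}-y)$, which is immediate for $x\leq x^{\prime}$ and $y\leq y^{\prime}$. The basic observation recorded in \S\,\ref{sec.standardizations} gives that any element of $\caKl$ is non-negative, so $m^{1}(S), m^{2}(S)\in [0,\infty)$ for every $S\subseteq N$; Lemma~\ref{lem.outer-compo} then yields supermodularity of $m^{3}:~ S\mapsto m^{1}(S)\cdot m^{2}(S)$. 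Since $m^{1}$ and $m^{2}$ both vanish on $\emptyset$ and on singletons, so does $m^{3}$, hence $m^{3}\in\caKl$.

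For the $u$-case I would invoke Observation~\ref{obs.reflect} together with the equivalence $r\in\caSl\Leftrightarrow r\compo\boliota\in\caSu$ recorded in the remark after it; combined with $m\in\caK\Leftrightarrow m\compo\boliota\in\caK$ this gives $m\in\caKu\Leftrightarrow m\compo\boliota\in\caKl$. Given $m^{1},m^{2}\in\caKu$, both reflections $m^{1}\compo\boliota$ and $m^{2}\compo\boliota$ lie in $\caKl$, whence by the case just established their pointwise product lies in $\caKl$. The pointwise identity $(m^{1}\cdot m^{2})\compo\boliota=(m^{1}\compo\boliota)\cdot(m^{2}\compo\boliota)$ then shows $m^{3}\compo\boliota\in\caKl$, and a final application of the equivalence returns $m^{3}\in\caKu$. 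There is no genuinely hard step here; the only point meriting attention is the need to stay within a standardized subcone so that the arguments of $g$ remain in $[0,\infty)$, which is precisely the role played by the non-negativity built into the $\ell$- and $u$-standardizations.
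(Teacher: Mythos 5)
Your proposal is correct and follows essentially the same route as the paper: the $\ell$-case is the instance $g(x,y)=x\cdot y$ of Lemma~\ref{lem.outer-compo} (with $m=m^{1}$, $r=m^{2}$), and the $u$-case is obtained by the reflection correspondence between $\caKu$ and $\caKl$ via Observation~\ref{obs.reflect}(i) together with $(m^{1}\cdot m^{2})\compo\boliota=(m^{1}\compo\boliota)\cdot(m^{2}\compo\boliota)$. Your explicit verification of the hypotheses of the lemma and of the vanishing of $m^{3}$ on $\emptyset$ and singletons just spells out details the paper leaves implicit.
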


\begin{proof}
Use Lemma \ref{lem.outer-compo}, where $m=m^{1}$, $r=m^{2}$ and
$g(x,y)= x\cdot y$ for $x,y\geq 0$. The statement about $u$-standardized functions
then follows from Observation \ref{obs.reflect}(i) and the reflection correspondence between $\caKu$ and $\caKl$:
$m^{i}\in\caKu ~\Rightarrow~ m^{i}\compo\boliota\in\caKl$, $i=1,2$, and
one has $m^{3}\compo\boliota =(m^{1}\compo\boliota)\cdot (m^{2}\compo\boliota)\in\caKl
~\Rightarrow~ m^{3}\in\caKu$.
\end{proof}
\smallskip

However, neither the multiplication nor the outer composition with a convex function
preserves extremality of a supermodular function. Consider $N=\{ a,b,c\}$ and $m_{0}$ from
Example \ref{exa.monot}. The multiplication of $m_{0}$ with itself results in
$$
\hat{m}_{0} = 4\cdot\delta_{\{ a,b,c\}} +\delta_{\{ a,b\}} +\delta_{\{ a,c\}} +\delta_{\{ b,c\}} =
2\cdot\delta_{\{ a,b,c\}} + (2\cdot\delta_{\{ a,b,c\}} +\delta_{\{ a,b\}} +\delta_{\{ a,c\}} +\delta_{\{ b,c\}}),
$$
which is not extreme in $\caK$. Of course, $\hat{m}_{0}$ is the outer composition of $m_{0}$
with $h(x):= x^{2}$.

\section{Lifting and support}\label{sec.lifting}

In this section we deal with a simple linear mapping to a higher-dimensional space
which preserves supermodularity and even extremality of a supermodular function.
Then we introduce a related characteristic of equivalence classes of supermodular
functions.

\subsection{Lifting transformation}\label{ssec.lift-tran}

\begin{defin}\rm\label{def.lifting}
Given $M\subseteq N$, the {\em lifting transformation\/} (from $M$ to $N$) is
a mapping $\liftMN : {\dv R}^{\caPM}\to {\dv R}^{\caP}$ defined as follows:
\begin{eqnarray}
\lefteqn{\hspace*{-7mm}\liftMN :\quad \res\in {\dv R}^{\caPM}~\mapsto ~\liftMN(\res)\equiv \ext~\in {\dv R}^{\caP},}\nonumber \\
&& \mbox{where~~ $[\liftMN (\res )](S)=\ext(S) := \res(S\cap M)$ ~~ for $S\subseteq N$.}
\label{eq.lift-def}
\end{eqnarray}
Given $\res\in {\dv R}^{\caPM}$ the function $\ext\equiv\liftMN (\res)$ is called the {\em lifting\/} of $\res$
(to $N$). A function $\ext\in {\dv R}^{\caP}$ will be called a {\em lifting from $M$} if
$\res\in {\dv R}^{\caPM}$ exists with $\liftMN (\res ) =\ext$.
\end{defin}

Clearly, if $\ext$ is a lifting of $\res\in {\dv R}^{\caPM}$ then $\res$ is the restriction of $\ext$ to $\caPM$.
In particular, $\liftMN$ is an injective mapping from ${\dv R}^{\caPM}$ to ${\dv R}^{\caP}$.
Because $\liftMN$ maps linearly $\caLM$ to $\caL$, $r^{1}\approx r^{2}$ implies
$\liftMN (r^{1})\approx \liftMN (r^{2})$ and the lifting can be interpreted as a transformation
of equivalence classes of\, $\approx$.

\begin{obser}\rm\label{obs.lift-basic} ~~
Assume $M\subseteq N$ and $\res\in {\dv R}^{\caPM}$.
\begin{itemize}
\item[(i)] Then $\res\in\caKM$ iff $\liftMN (\res)\in\caK$
and the independency model induced by $\liftMN (\res)$ is determined as follows: for any $\langle a,b|C\rangle\in\caE$,
\begin{equation}
a\ci b\,|\,C\,\,[\,\liftMN (\res)\,] ~~\Leftrightarrow~~ \{\, a\not\in M ~~\mbox{or}~~ b\not\in M ~~\mbox{or}~~
a\ci b\,|\,C\cap M\,\,[\res]\,\,\}\,.
\label{eq.lift-basic}
\end{equation}
In particular, $\liftMN$ maps $\caKM$ into $\caK$.
\item[(ii)] The lifting transformation commutes with basic standardizations:
given $\res\in\caKM$,\\
$(\liftMN (\res))_{\ell}=\liftMN (\res_{\ell})$, $(\liftMN (\res))_{u}=\liftMN (\res_{u})$,
$(\liftMN (\res))_{o}=\liftMN (\res_{o})$.
\end{itemize}
\end{obser}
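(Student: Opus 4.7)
The plan is to prove each claim directly from the definition $\ext(S)=\res(S\cap M)$ and the standardization formulas already available.

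For part (i), I would compute $\langle\ext,u_{\langle a,b|C\rangle}\rangle$ for an arbitrary $\langle a,b|C\rangle\in\caE$ by plugging in $\ext(S)=\res(S\cap M)$ and splitting into cases according to whether $a,b$ lie in $M$. When $a,b\in M$, the four intersections $(a\cup b\cup C)\cap M,\,(a\cup C)\cap M,\,(b\cup C)\cap M,\,C\cap M$ are exactly $a\cup b\cup (C\cap M),\,a\cup (C\cap M),\,b\cup (C\cap M),\,C\cap M$, so
\[
\langle\ext,u_{\langle a,b|C\rangle}\rangle = \langle\res,u_{\langle a,b|C\cap M\rangle}\rangle,
\]
with $\langle a,b|C\cap M\rangle\in\caEM$. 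When $a\notin M$ (the case $b\notin M$ is symmetric), $(a\cup b\cup C)\cap M=(b\cup C)\cap M$ and $(a\cup C)\cap M=C\cap M$, and the four-term sum collapses to $0$. Combining the two cases gives both supermodularity equivalence and the characterization \eqref{eq.lift-basic} of the induced independency model.

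For part (ii), the strategy is to show that the constants $k$ and the coefficients $\rho(i)$ defined by the formulas in Section~\ref{sec.standardizations} for $\ext$ match those for $\res$ on $M$ and vanish outside $M$ in the $\ell$- and $u$-cases. In the $\ell$-case, $\ext(\emptyset)=\res(\emptyset)$, and for $i\in N$ one has $\ext(i)=\res(i\cap M)$, which equals $\res(i)$ if $i\in M$ and $\res(\emptyset)$ otherwise, so $\rho_{\ell}(i)$ coincides with the analogous coefficient for $\res$ when $i\in M$ and is $0$ otherwise. Therefore
\[
(\liftMN(\res))_{\ell}(T) = \res(T\cap M)+k_{\ell}+\sum_{i\in T\cap M}\rho_{\ell}(i) = \res_{\ell}(T\cap M) = \liftMN(\res_{\ell})(T).
\]
The $u$-case is entirely analogous, using $\ext(N)=\res(M)$ and $\ext(N\setminus i)=\res(M\setminus i)$ for $i\in M$, respectively $\res(M)$ for $i\notin M$; a short count confirms that the $k_u$ constant for $\ext$ with respect to $N$ equals the $k_u$ constant for $\res$ with respect to $M$.

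The main obstacle is the $o$-standardization, where the sums $\sum_{S\subseteq N}\ext(S)$ and $\sum_{S\subseteq N}|S|\cdot\ext(S)$ must be reorganized by the change of variable $T=S\cap M$, $U=S\setminus M$, producing factors $2^{n-|M|}$ and $(n-|M|)\cdot 2^{n-|M|-1}$. An analogous decomposition applies to $\sum_{S\ni i}\ext(S)$, distinguishing $i\in M$ from $i\notin M$. Careful bookkeeping then shows that the coefficients $k_{o}$ and $\rho_{o}(i)$ computed for $\ext$ over $N$ reduce to those computed for $\res$ over $M$ when $i\in M$, and vanish when $i\notin M$; the identity $(\liftMN(\res))_{o}=\liftMN(\res_{o})$ follows by the same restriction-to-$T\cap M$ argument as in the $\ell$-case. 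Alternatively, one can bypass the calculation by observing that $\liftMN$ maps $\caLM$ into $\caL$ and that $\liftMN(\caSl(M))\subseteq\caSl$, $\liftMN(\caSu(M))\subseteq\caSu$, $\liftMN(\caSo(M))\subseteq\caSo$ (the last again requiring the counting above), whence uniqueness of the decomposition $\ext=\ext_{\calS}+l$ forces $\ext_{\calS}=\liftMN(\res_{\calS})$.
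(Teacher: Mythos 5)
Your proof is correct. Part (i) is essentially the paper's argument: the same evaluation of $\langle \liftMN(\res),u_{\langle a,b|C\rangle}\rangle$, which the paper writes compactly as $\delta(\,(a\cup b)\subseteq M\,)\cdot\langle\res,u_{\langle a,b|C\cap M\rangle}\rangle$ and you unfold into the two cases $a,b\in M$ and $a\notin M$; both versions yield the equivalence (the converse direction being immediate because $C\cap M=C$ for triplets in $\caEM$) and the model formula \eqref{eq.lift-basic}.

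For part (ii) your primary route differs in character from the paper's. The paper argues abstractly: any linear map sending $\caLM$ into $\caL$ and a complementary space $\calS(M)$ into $\caS$ commutes with the corresponding standardization, by uniqueness of the decomposition $\ext=\ext_{\calS}+l$; the only thing then needed is $\liftMN(\calS_{\ell}(M))\subseteq\caSl$, $\liftMN(\calS_{u}(M))\subseteq\caSu$, $\liftMN(\calS_{o}(M))\subseteq\caSo$, and this verification is explicitly left to the reader. You instead compute the coefficients $k$ and $\rho(i)$ of \eqref{eq.l-stan}, \eqref{eq.u-stan}, \eqref{eq.o-stan} for $\ext=\liftMN(\res)$ directly and show they agree with those for $\res$ on $M$ and vanish for $i\in N\setminus M$; your bookkeeping for the $o$-case is right (the factors $2^{n-|M|}$ and $(n-|M|)\cdot 2^{n-|M|-1}$ do make $k_o$ and $\rho_o(i)$, $i\in M$, reduce to the $M$-versions, and $\rho_o(i)=0$ for $i\notin M$). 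Your closing ``alternatively'' remark is precisely the paper's proof. The trade-off: your computation supplies concretely the detail the paper omits, while the paper's decomposition argument is shorter and applies verbatim to any standardization-compatible linear map (it is reused elsewhere, e.g.\ for $T_{\pi}$ and $T_{\iota}$).
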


\begin{proof}
(i): given $\langle a,b|C\rangle\in\caE$, $\res\in {\dv R}^{\caPM}$ and $\ext=\liftMN(\res)$, one has
\begin{eqnarray}
\lefteqn{\hspace*{-1cm}\langle\, \liftMN (\res), u_{\langle a,b|C\rangle}\rangle =\langle \ext, u_{\langle a,b|C\rangle}\rangle =
\ext(a\cup b\cup C)+ \ext(C) -\ext(a\cup C)-\ext(b\cup C)} \nonumber\\
&=& \res((a\cup b\cup C)\cap M) +\res(C\cap M) -\res((a\cup C)\cap M) -\res((b\cup C)\cap M) \nonumber\\
&=&  \delta (\,(a\cup b)\subseteq M\,)\cdot \langle \res, u_{\langle a,b\,|\, C\cap M\rangle}\rangle
\,,
\label{eq.lift-equal}
\end{eqnarray}
and, if $\res\in\caKM$ then the latter expression is non-negative as $\langle a,b\,|\,C\cap M\rangle\in\caEM$
in case $(a\cup b)\subseteq M$; \eqref{eq.lift-equal} also implies
\eqref{eq.lift-basic} because $i\ci j\,|\,K\,\,[\tilde{m}]$ means
$\langle \tilde{m}, u_{\langle i,j|K\rangle}\rangle=0$.\\[0.4ex]
(ii):
a basic observation is this: if a linear transformation $T$ maps $\caLM$ into $\caL$ and
a complementary space $\calS (M)$ into $\caS$ (see \S\,\ref{sec.standardizations}) then, given $\res\in\caKM$,
$\res=\res_{\calS}+l$ with $\res_{\calS}\in\calS (M)$, $l\in\caLM$ one has $T(\res)=T(\res_{\calS})+T(l)$ with
$T(\res_{\calS})\in\caS$, $T(l)\in\caL$
implying that $T(\res)_{\calS}=T(\res_{\calS})$ for any $\res\in\caKM$. We leave the reader to verify that
$\liftMN$ maps $\calS_{\ell}(M)$ to $\caSl$, $\calS_{u}(M)$ to $\caSu$ and $\calS_{o}(M)$ to $\caSo$.
\end{proof}

\noindent {\em Remark}\, However, not every standardization commutes with the lifting transformation.
Consider the following {\em weird standardization}, given by the complementary space
$$
\calS_{w}(N) := \{\, m\in {\dv R}^{\caP} :~ m(\emptyset)=0 ~~\&~~  \forall\,i\in N\quad m(i)=-m (N)\,\}\quad
\mbox{to~ $\caL$.}
$$
Put $N=\{ a,b,c\}$, $M=\{ a,b\}$ and $r=\delta_{\{ a,b\}}\in\caKM$. One has
$r_{w}=\frac{1}{3}\cdot\delta_{\{ a,b\}} -\frac{1}{3}\cdot\delta_{a}-\frac{1}{3}\cdot\delta_{b}$
while $\liftMN (r)=m^{\{ a,b\}\subseteq}=\delta_{\{ a,b,c\}}+\delta_{\{ a,b\}}$ with
$$
m^{\{ a,b\}\subseteq}_{w} ~=~
\frac{1}{4}\cdot\delta_{\{ a,b,c\}} +\frac{1}{2}\cdot\delta_{\{ a,b\}}
-\frac{1}{2}\cdot\delta_{\{ a,c\}} -\frac{1}{2}\cdot\delta_{\{ b,c\}}
-\frac{1}{4}\cdot\delta_{a} -\frac{1}{4}\cdot\delta_{b} -\frac{1}{4}\cdot\delta_{c}\,,
$$
which differs from the lifting of $r_{w}$ to $N$.
\medskip

The lifting transformation preserves extremality of a supermodular function. However,
one cannot use a direct argument to show that because faces are not mapped to faces.
Nevertheless, considering suitable standardization, for example the $\ell$-standardization,
allows one to overcome this technical obstacle.
The following auxiliary observation is used.

\begin{lem}\rm\label{lem.lift}
If $M\subseteq N$ then
\begin{eqnarray*}
\fac (M) &:=& \{\, \ext\in\caK\, :\ \ext_{\ell}(S)=\ext_{\ell} (S\cap M) ~~\mbox{for any $S\subseteq N$}\,\}\\
&=& \{\, \ext\in\caK\, :\ \langle \ext, u_{\langle i,j|C\rangle}\rangle=0 ~~\mbox{for $i\in N\setminus M$, $j\in N\setminus i$, $C\subseteq N\setminus (i\cup j)$}\,\}
\end{eqnarray*}
is a face of $\caK$. The face lattice of $\caKM$ is isomorphic to the face lattice of $\fac(M)$:
a~face $F$ of $\caKM$ is assigned the least face of $\caK$ containing $\liftMN (F)$ of the following form
$$
F\in \calF (M)  ~\longmapsto~ \{\, \liftMN (\res)+\sum_{i\in N\setminus M} \alpha_{i}\cdot m^{\upi}\,\,:\
\res\in F ~\&~ \alpha_{i}\in {\dv R}~~ \mbox{for $i\in N\setminus M$}\,\}\subseteq \fac(M).
$$
\end{lem}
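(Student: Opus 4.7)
The plan is to proceed in three stages: show that the two descriptions of $\fac(M)$ coincide and form a face of $\caK$; identify the assigned set with $\liftMN(F)+\caL$ and verify it is the smallest face of $\caK$ containing $\liftMN(F)$; establish that the induced correspondence is a lattice isomorphism onto the face lattice of $\fac(M)$. The second description of $\fac(M)$ is visibly a face, being the intersection of facets $\{\ext\in\caK :\ \langle \ext, u_{\langle i,j|C\rangle}\rangle = 0\}$ over $i\in N\setminus M$. For the inclusion from the first description to the second, I would note that $\ext - \ext_\ell \in \caL$ is orthogonal to every elementary imset, and that for $i \in N\setminus M$ the sets $\{i,j\}\cup C$ and $\{j\}\cup C$ share the same $M$-trace, as do $\{i\}\cup C$ and $C$; thus the lifting identity $\ext_\ell(S) = \ext_\ell(S\cap M)$ makes $\langle \ext_\ell, u_{\langle i,j|C\rangle}\rangle$ telescope to $0$. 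For the converse, fix $i \in N\setminus M$ and set $f_i(T) := \ext_\ell(T\cup i) - \ext_\ell(T)$ for $T \subseteq N\setminus i$; the imset equalities give $f_i(\{j\}\cup C) = f_i(C)$ for every $j \in N\setminus i$ and $C \subseteq N\setminus\{i,j\}$, so $f_i$ is constant, and $f_i(\emptyset) = \ext_\ell(i) = 0$ (as $\ext_\ell \in \caKl$) forces $f_i \equiv 0$; iterating over $i \in N\setminus M$ recovers $\ext_\ell(S) = \ext_\ell(S\cap M)$.

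Next, given $F \in \calF(M)$, Observation~\ref{obs.isom-stan} applied to $M$ shows $F \supseteq \caLM$, so $\liftMN(F)$ already contains $\liftMN(\caLM) = \mbox{span}\{m^{\emptyset\subseteq}, m^{i\subseteq} : i \in M\}$; adjoining the missing $m^{i\subseteq}$ for $i \in N\setminus M$ completes this to $\liftMN(F)+\caL$, which is exactly the set displayed in the statement. Writing $F = \{\res \in \caKM :\ \langle \res, u_{\langle i,j|C'\rangle}\rangle = 0 \text{ for } \langle i,j|C'\rangle \in I_F\}$ for a suitable $I_F \subseteq \caEM$ and setting
$$
\tilde I_F := \{\langle i,j|C\rangle\in\caE :\ i\in N\setminus M\} \cup \{\langle i,j|C\rangle\in\caE :\ i,j\in M,\ \langle i,j|C\cap M\rangle \in I_F\},
$$
formula \eqref{eq.lift-equal} together with $u_{\langle i,j|C\rangle}\perp\caL$ gives $\liftMN(F)+\caL = \{m\in\caK :\ \langle m,u\rangle=0 \text{ for } u\in\tilde I_F\}$, an intersection of facets and therefore a face of $\caK$ contained in $\fac(M)$. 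Since every face of $\caK$ contains $\caL$ by Observation~\ref{obs.isom-stan}, this is automatically the smallest face containing $\liftMN(F)$.

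Finally, the map $\Phi : F\mapsto \liftMN(F)+\caL$ is inclusion-preserving, and restriction to $\caPM$ inverts it: for any $m = \liftMN(\res)+l \in \Phi(F)$ one has $l|_{\caPM}\in\caLM\subseteq F$, hence $m|_{\caPM} = \res + l|_{\caPM} \in F$, so $F = \{m|_{\caPM} :\ m\in \Phi(F)\}$, yielding both injectivity and reverse-monotonicity. Surjectivity onto the face lattice of $\fac(M)$ reuses the computation above: any face $G\subseteq\fac(M)$ has the property that, for $m\in G$, $\langle m, u_{\langle i,j|C\rangle}\rangle$ with $i,j\in M$ depends only on $C\cap M$ (because $m_\ell = \liftMN(\res_\ell)$ for some $\res_\ell$), so $G$ is cut out by a set of the form $\tilde I_F$ and therefore coincides with $\Phi(F)$ for the associated $F\in\calF(M)$. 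I expect the main technical obstacle to be the ``$f_i$ constant'' argument in the first stage; the rest is careful bookkeeping with the facet description of faces and formula \eqref{eq.lift-equal}.
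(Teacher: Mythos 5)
Your proposal is correct. The first stage is essentially the paper's argument in a slightly streamlined form: where the paper runs two inductions (first showing $\ext(i\cup C)=\ext(C)$ for $C\subseteq M$, then extending to $\ext(T\cup C)=\ext(C)$ for $T\subseteq N\setminus M$), you show at once that the difference function $f_i(T)=\ext_\ell(T\cup i)-\ext_\ell(T)$ is constant on all of ${\cal P}(N\setminus i)$ and vanishes because $\ext_\ell(i)=\ext_\ell(\emptyset)=0$, after which stripping the elements of $S\setminus M$ one by one is immediate. The isomorphism part, however, follows a genuinely different route. The paper restricts to the $\ell$-standardized frame, observes that $\liftMN$ maps $\lozenge_\ell(M)$ bijectively onto $\fac_\ell(M)=\fac(M)\cap\caSl$, and then composes two applications of Observation~\ref{obs.isom-stan} (for $M$ and for $N$) to transport the face-lattice isomorphism; the explicit formula for the assigned face is read off at the end. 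You instead stay in the unstandardized setting and work directly with facet descriptions: you realize the assigned set as $\liftMN(F)+\caL$ cut out by the family $\tilde I_F$ of elementary imsets, get minimality from the fact that every face of $\caK$ contains $\caL$ and is closed under adding it, invert the map by restriction to $\caPM$, and get surjectivity by converting an arbitrary facet description of a subface of $\fac(M)$ into one of the form $\tilde I_F$ (using that, on $\fac(M)$, the value $\langle m,u_{\langle i,j|C\rangle}\rangle$ with $i,j\in M$ depends only on $C\cap M$). This buys an explicit $\caE$-indexed description of the image faces and makes the inverse map concrete, at the cost of more bookkeeping; the paper's route is shorter because it reuses Observation~\ref{obs.isom-stan} wholesale. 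Two points you use implicitly and should state: the inclusion $\{m\in\caK:\langle m,u\rangle=0,\ u\in\tilde I_F\}\subseteq\liftMN(F)+\caL$ needs your stage-one equivalence (the class of triplets with $i\in N\setminus M$ forces $m_\ell$ to be a lifting) before \eqref{eq.lift-equal} can transfer the remaining equalities to the restriction; and the identification of the face lattice of the cone $\fac(M)$ with the faces of $\caK$ contained in $\fac(M)$ is the standard ``face of a face'' fact, which deserves a sentence (the paper hides the analogous step inside its appeal to Observation~\ref{obs.isom-stan}). Neither is a real gap.
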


\begin{proof}
Because $\ext\approx \ext_{\ell}$ for any $\ext\in {\dv R}^{\caP}$, to
verify the equality of two expressions for $\fac(M)$ it is enough to show, for $\ext=\ext_{\ell}\in\caKl$, that
\begin{eqnarray*}
\lefteqn{\hspace*{-1cm}[\, \ext(S)=\ext(S\cap M) ~\mbox{for any $S\subseteq N$}\,]}\\
&\Leftrightarrow&
[\,\langle \ext, u_{\langle i,j|C\rangle}\rangle=0 ~\mbox{for $i\in N\setminus M$, $j\in N\setminus i$, $C\subseteq N\setminus (i\cup j)$}\,].
\end{eqnarray*}
The necessity of the latter condition is evident as $\ext(i\cup j\cup C)=\ext((j\cup C)\cap M)=\ext(j\cup C)$ and
$\ext(i\cup C)=\ext(C\cap M)=\ext(C)$ for $i\in N\setminus M$.
To verify the sufficiency, we first show by induction on $|C|$ that
$\ext(i\cup C)=\ext(C)$ for $i\in N\setminus M$ and $C\subseteq M$.
Indeed, the assumption $\ext$ is $\ell$-standardized implies that in case $|C|=0$; if $|C|>0$ choose
$j\in C$ and write
$$
0 =\langle \ext, u_{\langle i,j|C\setminus j\rangle}\rangle =
\ext(i\cup C) -\ext(C) \underbrace{-\ext(i\cup (C\setminus j))+ \ext(C\setminus j)}_{0}= \ext(i\cup C) -\ext(C)
$$
by the induction premise. Second, we show by induction on $|T|$ that $\ext(T\cup C)=\ext(C)$ for
$T\subseteq N\setminus M$ and $C\subseteq M$. Indeed, we know that in case $|T|\leq 1$; if $|T|>1$ then
choose distinct $i,j\in T$, denote $T^{\prime}=T\setminus (i\cup j)$ and write
\begin{eqnarray*}
\lefteqn{0 =\langle \ext, u_{\langle i,j|T^{\prime}\cup C\rangle}\rangle}\\
 &=&
\ext(T\cup C) \underbrace{-\ext(i\cup T^{\prime}\cup C) -\ext(j\cup T^{\prime}\cup C)+ \ext(T^{\prime}\cup C)}_{-2\cdot \ext(C)+\ext(C)}= \ext(T\cup C)-\ext(C)
\end{eqnarray*}
by the induction premise. This concludes the proof of sufficiency. The latter expression for $\fac(M)$ implies that it is a face of $\caK$.

To observe that the face lattices of $\caKM$ and $\fac (M)$ are isomorphic we introduce
$$
\fac_{\ell}(M) := \fac(M)\cap\caSl =
\{ \ext\in \caKl\,: \ \ext_{\ell}(S)=\ext_{\ell} (S\cap M) ~~\mbox{for any $S\subseteq N$}\,\},
$$
which is, by the above alternative description of $\fac (M)$, a face of $\caKl$.
The proof of isomorphy is based on Observation \ref{obs.isom-stan} and follows the following line:
$$
\caKM ~\longleftrightarrow~ \caKMl
~\longleftrightarrow~ \fac_{\ell}(M) ~\longleftrightarrow~  \fac (M)\,.
$$
Firstly, Observation \ref{obs.isom-stan} applied to $N=M$ and $\ell$-standardization says
that $\caKM$ and $\caKMl$ have isomorphic face lattices. Secondly, it is easy to see that
$$
\fac_{\ell}(M) = \{\, \liftMN (\res)\,:\ \res\in\caKMl\,\},
$$
that is, $\liftMN$ maps bijectively $\caKMl$ onto $\fac_{\ell}(M)$. Therefore, $\liftMN$
defines an isomorphism of face latices of $\caKMl$ and $\fac_{\ell}(M)$.
Thirdly, Observation \ref{obs.isom-stan} applied to $N$ and $\caS =\caSl$ says that
the face $\fac(M)$ of $\caK$ corresponds to $\fac_{\ell}(M)$ and
allows one to deduce that the face lattice of $\fac_{\ell}(M)$ and
that of $\fac(M)$ are isomorphic. Putting these three facts together implies
what is desired.

It remains to realize that the face ascribed to $F\in\calF (M)$
has just the above described form. This also follows from Observation \ref{obs.isom-stan}:
\begin{eqnarray*}
\lefteqn{\hspace*{-2.5cm}F\in \calF (M),\, F\subseteq\caKM  ~\mapsto~ F\cap\caKMl ~\mapsto~ \{ \liftMN (\res)\,:\ \res\in F\cap\caKMl\}}\\
&\mapsto& \{ \liftMN (\res)+l\,:\ \res\in F\cap\caKMl ~\&~ l\in\caL\,\} \\
&& ~= ~\{ \liftMN (\res)+l\,:\ \res\in F ~\&~ l\in\caL\,\}\,,
\end{eqnarray*}
which is clearly the least face of $\caK$ containing $\liftMN (F)$.
The fact $\caLM\subseteq F$ leads to a further simplification of the latter expression.
\end{proof}

\begin{cor}\rm\label{cor.lift-exteme}
Assume $M\subseteq N$ and $\ext\in\caK$ be a lifting of $\res\in\caKM$ to $N$.\\
Then $\ext$ is extreme in $\caK$ iff $\res$ is extreme in $\caKM$.
\end{cor}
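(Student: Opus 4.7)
The plan is to invoke Lemma \ref{lem.lift}, which supplies an order-preserving bijection $\Phi$ from the face lattice $\calF(M)$ of $\caKM$ onto the face lattice of $\fac(M)$, viewed as a sub-lattice of $\caF$. The goal will be to show $\Phi(F(\res))=F(\ext)$; the corollary then follows because $\res$ is extreme iff $F(\res)$ is an atom of $\calF(M)$, and analogously for $\ext$.

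First I would verify $\ext\in\fac(M)$, so that the minimality of $F(\ext)$ yields $F(\ext)\subseteq\fac(M)$. Using the equivalent description of $\fac(M)$ from Lemma \ref{lem.lift} together with Observation \ref{obs.lift-basic}(ii) ($\ext_{\ell}=\liftMN(\res_{\ell})$), one has $\ext_{\ell}(S)=\res_{\ell}(S\cap M)=\ext_{\ell}(S\cap M)$ for every $S\subseteq N$. Setting $G:=\Phi^{-1}(F(\ext))\in\calF(M)$, I would then prove $G=F(\res)$ by two inclusions. For $F(\res)\subseteq G$: by the explicit form of $\Phi$, $\ext$ can be written as $\liftMN(r)+\sum_{i\in N\setminus M}\alpha_{i}\cdot m^{\upi}$ with $r\in G$; restricting this identity to subsets of $M$, and noting that $m^{\upi}$ vanishes on $\caPM$ when $i\in N\setminus M$, forces $\res=r\in G$. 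For the reverse, $\Phi(F(\res))$ already contains $\liftMN(\res)=\ext$, so the minimality of $F(\ext)$ gives $F(\ext)\subseteq\Phi(F(\res))$, whence $G\subseteq F(\res)$ by applying $\Phi^{-1}$.

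To conclude, I would observe that $\caL\subseteq\fac(M)$: every $l\in\caL$ has $l_{\ell}\equiv 0$ and thus trivially satisfies the defining condition of $\fac(M)$. Hence $\caL$ is the minimum of both $(\caF,\subseteq)$ and the face lattice of $\fac(M)$; moreover, any face of $\caF$ contained in a sub-lattice element is itself a sub-lattice element, so the atoms of this sub-lattice are precisely the atoms of $\caF$ that lie in $\fac(M)$. The isomorphism $\Phi$ therefore transports atoms of $\calF(M)$ bijectively to such atoms of $\caF$, and the identity $\Phi(F(\res))=F(\ext)$ yields the desired equivalence. The main technical obstacle is the identification $\Phi(F(\res))=F(\ext)$: both inclusions rely essentially on the explicit form of $\Phi$ from Lemma \ref{lem.lift} and on the observation that the extra generators $m^{\upi}$ with $i\in N\setminus M$ vanish on $\caPM$, so that restriction of a linear combination recovers the $\caKM$-summand cleanly.
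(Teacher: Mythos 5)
Your proof is correct and takes essentially the same route as the paper: both rest on the lattice isomorphism of Lemma \ref{lem.lift} together with the observation that the atoms of the face lattice of $\fac(M)$ are exactly the atoms of $\caF$ contained in $\fac(M)$. The only difference is that you spell out the identification $\Phi(F(\res))=F(\ext)$ (via $\ext\in\fac(M)$ and the explicit form of the correspondence), a step the paper's one-line proof leaves implicit.
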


\begin{proof}
This follows from the second claim in Lemma \ref{lem.lift} because atomic faces of $\fac(M)$ are just the
atomic faces of $\caK$ belonging to $\fac(M)$.
\end{proof}

An analogous observation for $\ell$-standarized functions only was derived in \cite[Lemma~4.3]{KSTT12}.

\subsection{Support}
The lifting transformation allows one to introduce a set characteristic for any
$m\in\caK$. Its definition is not immediate: actually, the ``right" concept
appears to be a characteristic of an equivalence class of quantitative equivalence $\approx$.
That concept even occurs to be a characteristic of an equivalence class of qualitative
equivalence $\sim$.

\begin{obser}\label{obs.carrier}\rm  ~~
Assume $m\in {\dv R}^{\caP}$.
\begin{itemize}
\item[(i)]
There exists the least set $M\subseteq N$ such that
$m$ is a lifting from $M$, that is,
\begin{equation}
m (S) = m (S\cap M)\qquad \mbox{for every $S\subseteq N$}.
\label{eq.carrier}
\end{equation}
Let's call this least set $M\subseteq N$ the {\em carrier\/} of $m$ and denote by $\carr (m)$.
\item[(ii)] Given $m\in\caK$, one has
$$
\carr (m_{\ell})\subseteq \carr (m),\qquad
\carr (m_{u})\subseteq \carr (m),\qquad
\carr (m_{o})\subseteq \carr (m).
$$
In particular, there exists the least carrier within $\{\, r\in\caK\, :\, r\approx m\}$, namely
the set $\carr (m_{\ell})$, which coincides with $\carr (m_{u})$ and with $\carr (m_{o})$.
\end{itemize}
\end{obser}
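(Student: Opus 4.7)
The plan for part (i) is a straightforward closure-under-intersection argument. Consider the family
$\calI_{m} := \{\, M \subseteq N \,:\ m(S) = m(S \cap M) \mbox{ for every } S \subseteq N \,\}$,
which is non-empty because $N \in \calI_{m}$. I claim $\calI_{m}$ is closed under pairwise intersection: given $M_{1},M_{2} \in \calI_{m}$, apply the identity for $M_{1}$ with $S \cap M_{2}$ in place of $S$ to get $m(S \cap M_{2}) = m((S \cap M_{2}) \cap M_{1}) = m(S \cap M_{1} \cap M_{2})$, and combine with $m(S) = m(S \cap M_{2})$ to obtain $M_{1} \cap M_{2} \in \calI_{m}$. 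Since $\calI_{m}$ is finite, $\bigcap \calI_{m} \in \calI_{m}$ and this intersection is the desired least carrier $\carr (m)$.

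For part (ii), the core step is to show that each standardization preserves the carrier inclusion, namely $\carr (m_{*}) \subseteq \carr (m)$ for $* \in \{\ell, u, o\}$. Put $M := \carr (m)$. The key technical point is that the singleton coefficients $\rho_{*}(i)$ appearing in the formulas \eqref{eq.l-stan}, \eqref{eq.u-stan}, \eqref{eq.o-stan} vanish whenever $i \in N \setminus M$: for the $\ell$-case, $\rho_{\ell}(i) = m(\emptyset) - m(i) = m(\emptyset) - m(i \cap M) = 0$; for the $u$-case, $\rho_{u}(i) = m(N \setminus i) - m(N) = m((N \setminus i) \cap M) - m(M) = 0$; for the $o$-case, first rewrite $\rho_{o}(i) = \frac{1}{2^{n-1}}\bigl[\sum_{S \not\ni i} m(S) - \sum_{S \ni i} m(S)\bigr]$, then pair each $S \ni i$ with $S \setminus i$ and use the identity $m(S) = m(S \cap M) = m((S \setminus i) \cap M) = m(S \setminus i)$, which holds precisely because $i \notin M$, making the two partial sums cancel. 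Once the vanishing is in hand, for any $T \subseteq N$ we compute $m_{*}(T) - m_{*}(T \cap M) = [m(T) - m(T \cap M)] + \sum_{i \in T \setminus M} \rho_{*}(i) = 0$, so $m_{*}$ is a lifting from $M$ and hence $\carr (m_{*}) \subseteq M = \carr (m)$.

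To conclude that $\carr (m_{\ell})$, $\carr (m_{u})$, $\carr (m_{o})$ coincide and form the minimum carrier within the $\approx$-class of $m$, recall that $r \approx m$ forces $r_{\ell} = m_{\ell}$, $r_{u} = m_{u}$, and $r_{o} = m_{o}$ by uniqueness of standardized representatives. Hence, applying the inclusion of the previous paragraph to any $r \approx m$ gives $\carr (m_{\ell}) = \carr (r_{\ell}) \subseteq \carr (r)$, so $\carr (m_{\ell})$ (which itself lies in the class) is the minimum; taking $r := m_{u}$ and $r := m_{o}$ and then running the same argument symmetrically with $u$- or $o$-standardization in the role of $\ell$ forces the three carriers to coincide. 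The only mildly delicate point I anticipate is the $o$-standardization case in the middle paragraph, where one must reorganize the double sum so that the cancellation valid for $i \notin M$ becomes transparent; the other two standardizations are direct.
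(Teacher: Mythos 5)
Your proof is correct. Part (i) is essentially the paper's own argument: closure of the family of admissible sets $M$ under pairwise intersection, plus non-emptiness and finiteness (the paper additionally notes closure under supersets, which you rightly omit as unnecessary). In part (ii) you take a genuinely different route for the key inclusion $\carr(m_{*})\subseteq\carr(m)$. The paper sets $M=\carr(m)$, views $m$ as the lifting $\liftMN(\res)$ of its restriction $\res$ to $\caPM$, and invokes Observation \ref{obs.lift-basic}(ii) (the lifting transformation commutes with the $\ell$-, $u$- and $o$-standardizations), so that $m_{\ell}=\liftMN(\res_{\ell})$ is by construction a lifting from $M$; you instead verify the inclusion by hand from the explicit formulas \eqref{eq.l-stan}, \eqref{eq.u-stan}, \eqref{eq.o-stan}, showing that the singleton coefficients $\rho_{\ell}(i)$, $\rho_{u}(i)$, $\rho_{o}(i)$ vanish for $i\notin\carr(m)$ (your pairing $S\leftrightarrow S\setminus i$ in the $o$-case is the right manipulation, and the constant $k_{*}$ cancels in the difference $m_{*}(T)-m_{*}(T\cap M)$). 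Your computation is self-contained, does not rely on the lifting machinery, and incidentally makes it transparent that supermodularity plays no role in the inclusion itself; the paper's argument is shorter, reuses a lemma already established, and extends automatically to any standardization whose complementary space is mapped into itself by the lifting, without redoing coefficient computations. The concluding step — uniqueness of the standardized representative within a $\approx$-class, hence $\carr(m_{\ell})=\carr(r_{\ell})\subseteq\carr(r)$ for every $r\approx m$, and the symmetric argument forcing the three carriers to coincide — matches the paper's reasoning.
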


\begin{proof}
(i): if both $M_{1}\subseteq N$ and $M_{2}\subseteq N$ satisfy \eqref{eq.carrier}, then
their intersection does so:
$$
m (S) = m(S\cap M_{1}) = m((S\cap M_{1})\cap M_{2}) = m(S\cap (M_{1}\cap M_{2}))
\quad \mbox{for every $S\subseteq N$,}
$$
where, first, \eqref{eq.carrier} for $M_{1}$ is applied and then \eqref{eq.carrier} for $M_{2}$ with $S\cap M_{1}$
in place of $S$ is used. Similarly, one can easily observe that if $m$ satisfies \eqref{eq.carrier} with
$M_{1}$ and $M_{1}\subseteq M_{2}\subseteq N$ then $m$ satisfies \eqref{eq.carrier} with $M_{2}$.
In particular, the (non-empty) class of $M\subseteq N$ such that \eqref{eq.carrier} holds for $M$
is closed under intersection and supersets; therefore, it has the least element.\\[0.4ex]
(ii): put $M =\carr (m)$ and realize that $m$ is the lifting from $M$ of its restriction $r$ to $\caPM$.
By Observation \ref{obs.lift-basic}(ii), $m_{\ell}$ is the lifting from $M$ of $r_{\ell}$.
In particular, $\carr (m_{\ell})\subseteq M$ and, because $m_{\ell}\approx m$, $\carr (m_{\ell})$ is the
least carrier within $\{ r\in\caK\, :\ r\approx m_{\ell}\}$. Analogous arguments can be applied in the
cases of $u$-standardization and $o$-standardization.
\end{proof}

The term ``carrier" we use to name our auxiliary concept was taken over from \cite{roswei74}.
As one can expect, the carrier is not an invariant of an equivalence class of $\approx$:
take $N=\{ a,b,c\}$, $m_{1}=m^{\{a,b\}\subseteq}$ and $m_{2}=m_{1}+m^{c\subseteq}$ in which case
$m_{1}\approx m_{2}$ but $\carr (m_{1})=\{a,b\}\neq \{ a,b,c\}=\carr (m_{2})$.
Thus, the substantial concept seems to be the following one.

\begin{defin}[support]\rm ~\\
Given $m\in\caK$,  the least carrier within the equivalence class $\{\, r\in\caK\, :\, r\approx m\}$
will be called the {\em support\/} of $m$ and denoted by $\supp (m)$.
\end{defin}

Thus, Observation \ref{obs.carrier}(ii) claims, in fact, that the support exists
for any $m\in\caK$ and, moreover, $\supp (m) =\carr (m_{\ell})
=\carr (m_{u})=\carr (m_{o})$. A deeper fact is as follows.

\begin{obser}\label{obs.support}\rm  ~~
Given $m^{1},m^{2}\in\caK$, one has
$$
m^{1}\sim m^{2} \quad\Rightarrow\quad \supp (m^{1})=\supp (m^{2})\,.
$$
\end{obser}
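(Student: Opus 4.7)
The plan is to show that $\supp(m)$ depends only on the induced independency model $\calI(m)$; then Corollary \ref{cor.face}, which says $m^{1}\sim m^{2} \Leftrightarrow \calI(m^{1})=\calI(m^{2})$, immediately yields the claim.

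By Observation \ref{obs.carrier}(ii) one has $\supp(m)=\carr(m_{\ell})$, so it suffices to describe $\carr(m_{\ell})$ in terms of $\calI(m)$. For this I would invoke Lemma \ref{lem.lift}: the face $\fac(M)$ consists of all $m\in\caK$ satisfying $m_{\ell}(S)=m_{\ell}(S\cap M)$ for every $S\subseteq N$, and, equivalently, of all $m$ with $\langle m, u_{\langle i,j|C\rangle}\rangle=0$ for every $i\in N\setminus M$, $j\in N\setminus i$, and $C\subseteq N\setminus (i\cup j)$. Hence the condition $\carr(m_{\ell})\subseteq M$ is equivalent to membership in $\calI(m)$ of every elementary triplet $\langle i,j|C\rangle\in\caE$ with $i\in N\setminus M$.

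Exploiting the symmetry $u_{\langle i,j|C\rangle}=u_{\langle j,i|C\rangle}$, the admissible choices of $M$ are precisely those containing $\{a,b\}$ for every $\langle a,b|C\rangle\in\calD(m)$. Intersecting over all such $M$ yields the explicit formula
$$
\supp(m) ~=~ \carr(m_{\ell}) ~=~ \bigcup\,\{\,\{a,b\}\,:\ \exists\, C\subseteq N\setminus(a\cup b),\ \langle a,b|C\rangle\in\calD(m)\,\},
$$
where $\calD(m)=\caE\setminus\calI(m)$. The right-hand side manifestly depends only on $\calI(m)$. Therefore, if $m^{1}\sim m^{2}$ then $\calI(m^{1})=\calI(m^{2})$ by Corollary \ref{cor.face}, and $\supp(m^{1})=\supp(m^{2})$ follows. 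I do not anticipate any substantive obstacle: the argument is a direct consequence of Lemma \ref{lem.lift} together with the independency-model characterization of qualitative equivalence.
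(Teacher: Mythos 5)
Your proof is correct, but it follows a genuinely different route than the paper's. The paper argues geometrically: via Corollary \ref{cor.face} it takes the face $F\in\caF$ with $m^{1},m^{2}\in\inte (F)$, picks a set-minimal carrier $M$ among $\{\carr (r):r\in\inte (F)\}$, and shows $F\subseteq\fac (M)$ (since $F\cap\fac (M)$ is a face meeting $\inte (F)$, it cannot be a proper sub-face of $F$), whence $r\mapsto\carr (r_{\ell})=\supp (r)$ is constant on $\inte (F)$. You instead combine the two descriptions of $\fac (M)$ in Lemma \ref{lem.lift} with Observation \ref{obs.carrier}(i) (the sets $M$ satisfying $m_{\ell}(S)=m_{\ell}(S\cap M)$ for all $S$ are exactly the supersets of $\carr (m_{\ell})$) to get: $\carr (m_{\ell})\subseteq M$ iff every $\langle i,j|C\rangle\in\caE$ with $i\notin M$ lies in $\calI (m)$; by the symmetry of $u_{\langle i,j|C\rangle}$ in its first two arguments this gives the closed formula $\supp (m)=\bigcup\,\{\{a,b\}:\ \exists\, C,\ \langle a,b|C\rangle\in\calD (m)\}$, which visibly depends on $\calI (m)$ alone, so $m^{1}\sim m^{2}$ (equality of induced models) immediately gives equal supports. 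Your argument is more elementary in that it dispenses with the relative-interior/face-minimality reasoning (Corollary \ref{cor.face} is needed only in its definitional content), and it buys a sharper by-product, namely an explicit combinatorial description of the support as the set of variables occurring in the first two components of some dependency triplet; the paper's argument, on the other hand, stays at the level of the face lattice and records that the carrier of the $\ell$-standardization is uniform over the whole relative interior of a face, without producing the explicit formula.
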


\begin{proof}
Given $m^{1},m^{2}\in\caK$ with $m^{1}\sim m^{2}$, Corollary \ref{cor.face} says
there exists a face $F\in\caF$ of $\caK$ such that $m^{1},m^{2}\in\inte (F)$. Of course,
this face $F$ is uniquely determined by either $m^{1}$ or $m^{2}$. Another note is that $\inte (F)$ is
closed under quantitative equivalence, because $m\approx r$ implies $m\sim r$ and one can
apply Corollary \ref{cor.face} to $m$ and $r$.

Take the above face $F\in\caF$ and consider a set function $m\in\inte (F) \mapsto \carr (m)$
on its interior. This function must have some minimizer with respect to set inclusion;
let $M$ be the corresponding set-minimal carrier, that is,
\begin{equation}
M=\carr (m) ~~\mbox{ for some $m\in\inte (F)$} \quad\&\quad
\forall\, r\in\inte (F) \quad \neg[\,\carr (r)\subset M\,]\,.
\label{eq.supp1}
\end{equation}
We are going to show
\begin{equation}
\forall\, r\in F \qquad \carr (r_{\ell})\subseteq M\,.
\label{eq.supp2}
\end{equation}
Indeed, since $\inte (F)$ is closed under $\approx$-equivalence and $m\approx m_{\ell}$, Observation \ref{obs.carrier}(ii) and \eqref{eq.supp1} allows us to deduce $M=\carr (m_{\ell})$. Thus, $m_{\ell}$ is a lifting from $M$ and $m$ belongs to the special face $\fac (M)$ of $\caK$ introduced in Lemma \ref{lem.lift}.
In particular,
$$
m\in\inte (F)\cap\fac (M)\subseteq \underbrace{F\cap\fac (M)}_{F^{\prime}}.
$$
The sub-face $F^{\prime}$ of $F$ cannot be a proper sub-face of $F$, because otherwise, by the definition
of $\inte (F)$ a contradictory conclusion $m\not\in\inte (F)$ is derived. Hence, $F^{\prime}=F$, which
is another way of saying $F\subseteq \fac (M)$. That implies, for every $r\in F$ and $S\subseteq N$, that
$r_{\ell}(S)=r_{\ell}(S\cap M)$. By the definition of the carrier, $\carr (r_{\ell})\subseteq M$ and
\eqref{eq.supp2} is verified.

Now, \eqref{eq.supp2} combined with \eqref{eq.supp1} implies $\carr (r_{\ell})=M$ for any $r\in\inte (F)$.
In particular, the set-function $r\in\inte (F) \mapsto \carr (r_{\ell})\equiv\supp (r)$ is constant
on $\inte (F)$, taking the value $M$. Thus, one has
$\supp (m^{1})=M=\supp (m^{2})$ for original $m^{1},m^{2}\in\caK$.
\end{proof}

\section{Projections}\label{sec.projections}
In this section we discuss supermodularity-preserving transformations
from ${\dv R}^{\caP}$ to a lower-dimensional space ${\dv R}^{\caPM}$ where $|M|\leq |N|$.

\subsection{Linear projections: minors and their mean}\label{ssec.minors}
A natural linear mapping from ${\dv R}^{\caP}$ to ${\dv R}^{\caPM}$, $M\subseteq N$, is the {\em restriction\/}
of $m\in{\dv R}^{\caP}$ to $\caPM$. It can be viewed as a complementary operation to the lifting because
its application to $\liftMN (r)$ returns back $r\in {\dv R}^{\caPM}$.
However, the restriction is not the only such linear mapping.
One can alternatively ascribe to $m\in{\dv R}^{\caP}$ the function $r\in {\dv R}^{\caPM}$ given by
\begin{equation}
r(S) := m(S\cup (N\setminus M))\qquad \mbox{for $S\subseteq M$,}
\label{eq.def-conditioning}
\end{equation}
which defines a different projection of ${\dv R}^{\caP}$ to ${\dv R}^{\caPM}$. Both these projections can be interpreted
as special cases of a more general linear projection.

\begin{defin}\rm\label{def.minor}
Assume $M\subseteq N$ and $N\setminus M=D\cup E$ with $D\cap E=\emptyset$, that is,
the sets $D$, $E$ and $M$ form a partitioning of $N$.
The {\em minor projection with deleting of $D$ and extracting of $E$} is
a mapping $\minoDE : {\dv R}^{\caP}\to {\dv R}^{\caPM}$ defined as follows:
\begin{eqnarray}
\lefteqn{\minoDE :\quad m\in {\dv R}^{\caP}~\mapsto ~\minoDE (m)\equiv r\in {\dv R}^{\caPM},}\nonumber \\
&& \mbox{where~~ $[\minoDE (m)](S)=r(S) := m(S\cup E)$ ~~ for $S\subseteq M$.}
\label{eq.minor-def}
\end{eqnarray}
Given $m\in {\dv R}^{\caP}$ the function $r\equiv\minoDE (m)$ is called the {\em minor\/} of $m$
obtained by deleting of $D$ and extracting of $E$.
A function $r\in {\dv R}^{\caPM}$, $M\subseteq N$, will be called a {\em minor} of $m\in {\dv R}^{\caP}$
if a partitioning of $N\setminus M$ into sets $D$ and $E$ exists such that $\minoDE (m) =r$.
\end{defin}

The terminology is motivated by matroid theory, where these operations
(with rank functions of matroids) are named analogously.
Because $\minoDE$ maps linearly $\caL$ to $\caLM$, $m^{1}\approx m^{2}$ implies
$\minoDE (m^{1})\approx \minoDE (m^{2})$; thus, $\minoDE$ can be interpreted as a transformation
of equivalence classes of\, $\approx$. Minor projections preserve supermodularity.

\begin{obser}\label{obs.minor}\rm  ~~
Assume $M\subseteq N$ and $N\setminus M=D\cup E$ with $D\cap E=\emptyset$.
If $m\in\caK$ then $\minoDE (m)\in\caKM$ and the independency model induced by $ \minoDE (m)$ is as follows:
\begin{equation}
a\ci b\,|\,C\,\,[\,\minoDE (m)\,] ~~\Leftrightarrow~~
a\ci b\,|\,C\cup E\,\,[m]\qquad \mbox{for any $\langle a,b|C\rangle\in\caEM$.}
\label{eq.minor-indep}
\end{equation}
In particular, $\minoDE$ maps linearly $\caK$ into $\caKM$.
\end{obser}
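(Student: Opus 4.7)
The plan is to mimic the computations done for the lifting and reflection transformations (Observations~\ref{obs.lift-basic} and \ref{obs.reflect}), namely to express the scalar product $\langle \minoDE(m), u_{\langle a,b|C\rangle}\rangle$ for an elementary triplet $\langle a,b|C\rangle\in\caEM$ in terms of a scalar product of $m$ against an elementary imset over $N$. Both the supermodularity claim and the characterization \eqref{eq.minor-indep} of the induced independency model will then drop out of the same identity.

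Concretely, let $r := \minoDE(m)$, so $r(S) = m(S\cup E)$ for $S\subseteq M$. Given $\langle a,b|C\rangle\in\caEM$, I would expand
\begin{eqnarray*}
\langle r, u_{\langle a,b|C\rangle}\rangle
&=& r(a\cup b\cup C) + r(C) - r(a\cup C) - r(b\cup C)\\
&=& m(a\cup b\cup C\cup E) + m(C\cup E) - m(a\cup C\cup E) - m(b\cup C\cup E)\\
&=& \langle m, u_{\langle a,b\,|\,C\cup E\rangle}\rangle,
\end{eqnarray*}
where I use that $a,b\in M$ while $E\subseteq N\setminus M$, so $\{a,b\}$, $C$ and $E$ are pairwise disjoint and the combined conditioning set $C\cup E$ is a subset of $N\setminus\{a,b\}$. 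In particular, $\langle a,b\,|\,C\cup E\rangle\in\caE$ is a genuine elementary triplet over $N$.

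From this identity the proof essentially writes itself. Since $m\in\caK$, the right-hand side is non-negative, and the characterization of supermodularity by the elementary inequalities (recalled right after the definition of $\caK$) yields $r\in\caKM$; linearity of $\minoDE$ is immediate from \eqref{eq.minor-def}. The second assertion follows by specializing to the equality case: $a\ci b\,|\,C\,\,[r]$ is by Definition~\ref{def.CI-induced} exactly $\langle r, u_{\langle a,b|C\rangle}\rangle = 0$, which by the displayed computation is equivalent to $\langle m, u_{\langle a,b|C\cup E\rangle}\rangle = 0$, i.e.\ $a\ci b\,|\,C\cup E\,\,[m]$.

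There is no real obstacle here: the only thing worth checking carefully is the disjointness condition ensuring $\langle a,b\,|\,C\cup E\rangle\in\caE$, which uses $a,b\in M$ and $E\cap M=\emptyset$. Everything else is a pattern identical to the proofs already given for $T_\pi$, $T_\iota$ and $\liftMN$.
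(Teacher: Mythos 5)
Your proposal is correct and follows essentially the same route as the paper's own proof: expand $\langle \minoDE(m), u_{\langle a,b|C\rangle}\rangle$, use $r(S)=m(S\cup E)$ to identify it with $\langle m, u_{\langle a,b|C\cup E\rangle}\rangle$, and read off both supermodularity and \eqref{eq.minor-indep} from that single identity. Your extra remark on the disjointness guaranteeing $\langle a,b|C\cup E\rangle\in\caE$ is exactly the point the paper invokes implicitly.
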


\begin{proof}
Assume $m\in\caK$ and put $r:= \minoDE (m)$. Given $\langle a,b|C\rangle\in\caEM$
\begin{eqnarray}
\lefteqn{\hspace*{-1cm}\langle\, \minoDE (m), u_{\langle a,b|C\rangle}\rangle = \langle r, u_{\langle a,b|C\rangle}\rangle =
r(a\cup b\cup C)+ r(C) -r(a\cup C) -r(b\cup C)} \nonumber\\
&=& m(a\cup b\cup C\cup E)+ m(C\cup E) -m(a\cup C\cup E) -m(b\cup C\cup E) \nonumber\\
&=& \langle m, u_{\langle a,b|C\cup E\rangle}\rangle\geq 0,
\label{eq.minor-equal}
\end{eqnarray}
because $\langle a,b|C\cup E\rangle\in\caE$ and $m\in\caK$.
The formula \eqref{eq.minor-equal} also implies
\eqref{eq.minor-indep} because $i\ci j\,|\,K\,\,[\tilde{m}]$ means
$\langle \tilde{m}, u_{\langle i,j|K\rangle}\rangle=0$.
\end{proof}

The above discussed restriction to $\caPM$ can be interpreted as the
{\em minor projection with deleting of $N\setminus M$}, that is, $D=N\setminus M$, $E=\emptyset$
(no extraction part); a simplified notation can be $T_{-D}$ then.
In the context of the corresponding operation with (induced) independency models,
given by \eqref{eq.minor-indep} with $E=\emptyset$, the name {\em marginalization} is common.

Similarly, the mapping \eqref{eq.def-conditioning} is called
the {\em minor projection with extracting of $N\setminus M$}, that is, $D=\emptyset$, $E=N\setminus M$
(no deletion part); shorter simpler notation could be $T_{|E}$.
Some books on matroid theory use the term ``contraction" instead of ``extraction", but
we found that misleading because ``contraction"  much better fits another operation which discussed later in \S\,\ref{ssec.contraction}.
In the context of the operation with independency models,
given by \eqref{eq.minor-indep} with $E=N\setminus M$, the name {\em conditioning}
has often been used.

It is easy to see that any minor projection is a linear transformation complementary to the lifting
transformation in the sense that $\minoDE\compo\liftMN$ is identity, formally
$$
\forall\, M\subseteq N, D\cup E=N\setminus M, D\cap E=\emptyset\qquad
\minoDE \,(\,\liftMN (r)\,) = r ~~\mbox{for any $r\in {\dv R}^{\caPM}$.}
$$
However, the minor projections do not preserve extremality of a supermodular function.

\begin{example}\label{exa.minors}\rm
Consider $N=\{ a,b,c,d\}$, $M=\{ a,b,c\}$ and put
$$
m_{*} ~:=~ 2\cdot\delta_{\{ a,b,c,d\}} +\delta_{\{ a,b,c\}} +\delta_{\{ a,b,d\}} +\delta_{\{ a,c,d\}},
$$
which an extreme non-decreasing function in $\caKl$. The extraction of $E=\{ d\}$ gives
$$
T_{|d}(m_{*}) = 2\cdot\delta_{\{ a,b,c\}} +\delta_{\{ a,b\}} +\delta_{\{ a,c\}} =
(\delta_{\{ a,b,c\}} +\delta_{\{ a,b\}}) \,+\, (\delta_{\{ a,b,c\}} +\delta_{\{ a,c\}})\,,
$$
which is not an extreme supermodular function. Another example is
$$
r_{*} \,:=\, 3\cdot\delta_{\{ a,b,c,d\}} +2\cdot\delta_{\{ a,b,c\}} +2\cdot\delta_{\{ a,b,d\}} +2\cdot\delta_{\{ a,c,d\}}
+\delta_{\{ b,c,d\}} +\delta_{\{ a,b\}} +\delta_{\{ a,c\}}  +\delta_{\{ a,d\}},
$$
which is also an extreme supermodular function in $\caKl$. The deletion of $D=\{ d\}$
gives the same result $T_{-d}(r_{*})
=(\delta_{\{ a,b,c\}} +\delta_{\{ a,b\}}) \,+\, (\delta_{\{ a,b,c\}} +\delta_{\{ a,c\}})$.
\end{example}

\noindent {\em Remark}\, Another linear projection from ${\dv R}^{\caP}$ to ${\dv R}^{\caPM}$, $M\subseteq N$, is as follows: assign to $m\in {\dv R}^{\caP}$ the function $r\in {\dv R}^{\caPM}$ given by the formula
\begin{equation}
r(S) \,:=\, \frac{2^{|M|}}{2^{|N|}}\cdot
\sum_{L\subseteq N\setminus M} m(S\cup L) \qquad \mbox{for any $S\subseteq M$}.
\label{eq.mean-minor}
\end{equation}
This linear mapping can be interpreted as the {\em mean-minor projection} because
\eqref{eq.mean-minor} can be re-written as
$$
r(S) = \frac{1}{2^{|N\setminus M|}}\cdot \sum_{D,E\subseteq N\setminus M,\, D\cap E=\emptyset}
[\minoDE (m)](S)\qquad \mbox{for $S\subseteq M$}.
$$
Being a convex combination of (all) minor projections, this mean-minor projection also
maps linearly $\caK$ to $\caKM$ and is complementary to the lifting transformation.
The independency model induced by $r$ given by \eqref{eq.mean-minor} is determined as follows:
$$
a\ci b\,|\,C\,\,[\,r\,] ~~\Leftrightarrow~~
\left\{\,\forall\, L\subseteq N\setminus M\quad  a\ci b\,|\,C\cup L\,\,[m]\,\right\}
\qquad \mbox{for any $\langle a,b|C\rangle\in\caEM$.}
$$
As one can expect, the mean-minor projection does not preserve extremality; one
can follow Example \ref{exa.minors} and observe that $m_{*}=2\cdot\delta_{\{ a,b,c,d\}} +\delta_{\{ a,b,c\}} +\delta_{\{ a,b,d\}} +\delta_{\{ a,c,d\}}$ has the projection
$$
\frac{1}{2}\cdot(\delta_{\{ a,b,c\}} +\delta_{\{ a,b\}}) \,+\, \frac{1}{2}\cdot(\delta_{\{ a,b,c\}} +\delta_{\{ a,c\}}) \,+\, \frac{1}{2}\cdot\delta_{\{ a,b,c\}} \quad \mbox{to $M=\{a ,b,c\}$}\,.
$$
\smallskip

An additional comment on the linear projections is that the {\em deletion} $T_{-N\setminus M}$
preserves $\ell$-standardization, the {\em extraction} $T_{|N\setminus M}$ preserves $u$-standardization
and the {\em mean-minor} projection preserves $o$-standardization; but they do not preserve the
other standardizations. Thus, it seems these three linear projections ``correspond" to the basic standardizations.

\subsection{Coarsening and contraction}\label{ssec.contraction}
Besides minor projections, there is another linear supermodularity-preserving mapping
to a lower-dimensional space.
To illustrate the idea consider a set $S\subseteq N$ of variables and imagine
we wish to ``contract" the set $S$ into a single variable $s\in S$. The following concept
of coarsening generalizes the idea of the contraction operation.

\begin{defin}\rm\label{def.contraction}
Let $N,M$ be sets of variables and $\sigma :N\to M$ a mapping onto $M$.
It induces a pre-image mapping $\bolsigma :\caPM\to\caP$.
The {\em coarsening transformation} determined by $\sigma$ is
a mapping $\coars : {\dv R}^{\caP}\to {\dv R}^{\caPM}$ defined as follows:
\begin{equation}
\coars: m\in {\dv R}^{\caP}\mapsto\, m\compo\bolsigma\in {\dv R}^{\caPM},~
\mbox{where}~ m\compo \bolsigma (T) := m(\sigma_{-1}(T)) ~~\mbox{for $T\subseteq M$.}
\label{eq.contract-def}
\end{equation}
A special case of this operation is as follows: given
$s\in S\subseteq N$ and $N^{\prime}:= (N\setminus S)\cup s\subseteq N$
the {\em contraction\/} (of $S$ to $s$) is the mapping
$\contrS : {\dv R}^{\caP}\to {\dv R}^{\caPp}$ defined as the coarsening
transformation determined the mapping $\sigma: N\to N^{\prime}$ given by
$$
\sigma (t) =\left\{
\begin{array}{lll}
s && \mbox{if~ $t\in S$},\\
t && \mbox{if~ $t\in N\setminus S$},
\end{array}
\right.
\qquad \mbox{ for $t\in N$.}
$$
\end{defin}
\smallskip

In particular, given $m\in {\dv R}^{\caP}$, $s\in S\subseteq N$ and $r:= \contrS (m)$, one has
$$
r(L)=m(L) ~~\mbox{and}~~ r(L\cup s)=m(L\cup S)\qquad
\mbox{for $L\subseteq N\setminus S$},
$$
which operation clearly differs from the minor operation, either that with deleting of \mbox{$S\setminus s$} and
that with extracting of $S\setminus s$. Because $\coars$ maps linearly $\caL$ to $\caLM$, $m^{1}\approx m^{2}$ implies
$\coars (m^{1})\approx \coars (m^{2})$ and the coarsening can be interpreted as a transformation
of equivalence classes of\, $\approx$; in particular, the same holds for the contraction.

Another comment is that the permutation transformation
$T_{\pi}$ from \S\,\ref{ssec.permutation} can be viewed as a special case of the coarsening
transformation $\coars$, with $\sigma$ being the inverse of $\pi$.
The coarsening also preserves supermodularity.

\begin{obser}\label{obs.coarsening}\rm  ~
Assume that $\sigma :N\to M$ a mapping onto $M$.
If $m\in\caK$ then $\coars (m)\in\caKM$ and the independency model induced by $\coars (m)$ is
as follows:
\begin{equation}
a\ci b\,|\,C\,\,[\,\coars (m)\,] ~\Leftrightarrow~
\sigma_{-1}(a)\ci \sigma_{-1}(b)\,|\,\sigma_{-1}(C)\,\,[m]\qquad \mbox{for any $\langle a,b|C\rangle\in\caEM$.}
\label{eq.coars-indep}
\end{equation}
In particular, given $s\in S\subseteq N$,
$\contrS$ maps $\caK$ to $\calK(N^{\prime})$ with $N^{\prime}:= (N\setminus S)\cup s\subseteq N$.
\end{obser}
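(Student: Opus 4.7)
The plan is to imitate the pattern used in Observation \ref{obs.lift-basic} and Observation \ref{obs.minor}: compute the scalar product $\langle \coars(m), u_{\langle a,b|C\rangle}\rangle$ directly from the definition and rewrite it as a scalar product of $m$ against a generalized imset on $N$.

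Fix $\langle a,b|C\rangle\in\caEM$ and set $A:=\sigma_{-1}(a)$, $B:=\sigma_{-1}(b)$, $C^{\prime}:=\sigma_{-1}(C)$. The preimage operation $\bolsigma$ commutes with unions, that is, $\sigma_{-1}(X\cup Y)=\sigma_{-1}(X)\cup\sigma_{-1}(Y)$, and maps disjoint sets to disjoint sets. Since $a,b,C$ are pairwise disjoint in $M$, the sets $A,B,C^{\prime}$ are pairwise disjoint subsets of $N$; since $\sigma$ is onto $M$, both $A$ and $B$ are nonempty. Then by \eqref{eq.contract-def},
\begin{eqnarray*}
\langle \coars(m), u_{\langle a,b|C\rangle}\rangle
&=& m(\sigma_{-1}(a\cup b\cup C)) + m(\sigma_{-1}(C)) - m(\sigma_{-1}(a\cup C)) - m(\sigma_{-1}(b\cup C)) \\
&=& m(A\cup B\cup C^{\prime}) + m(C^{\prime}) - m(A\cup C^{\prime}) - m(B\cup C^{\prime}) \\
&=& \langle m, u_{\langle A,B|C^{\prime}\rangle}\rangle\,.
\end{eqnarray*}

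Because $m\in\caK$ and $A,B,C^{\prime}$ are pairwise disjoint, the inequality $\langle m, u_{\langle A,B|C^{\prime}\rangle}\rangle \geq 0$ is among the consequences of the elementary-imset inequalities, as already noted in Section~3. Applying this for every $\langle a,b|C\rangle\in\caEM$ and using the elementary-imset characterization of supermodularity (stated in Section~3, now relative to $M$) yields $\coars(m)\in\caKM$. The equivalence \eqref{eq.coars-indep} then follows at once from the displayed identity, since $\langle \coars(m), u_{\langle a,b|C\rangle}\rangle=0$ iff $\langle m, u_{\langle A,B|C^{\prime}\rangle}\rangle=0$, which is precisely $\sigma_{-1}(a)\ci \sigma_{-1}(b)\,|\,\sigma_{-1}(C)\,[m]$ in the generalized sense introduced in Section~2. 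The statement about $\contrS$ is the specialization of this to the mapping $\sigma:N\to N^{\prime}$ defined in Definition \ref{def.contraction}.

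There is no genuine obstacle: the proof is of the same flavor as the two preceding ones. The only points that require a brief verification are that preimages preserve unions and disjointness (so that the generalized imset $u_{\langle A,B|C^{\prime}\rangle}$ is well defined) and that one must invoke the generalized version of the elementary-imset inequality for pairwise disjoint triples rather than just elementary ones, since $|A|$ and $|B|$ may exceed one.
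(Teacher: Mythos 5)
Your proof is correct and follows essentially the same route as the paper: the same direct computation rewriting $\langle \coars(m), u_{\langle a,b|C\rangle}\rangle$ as $\langle m, u_{\langle \sigma_{-1}(a),\sigma_{-1}(b)|\sigma_{-1}(C)\rangle}\rangle$, followed by the generalized imset inequality for pairwise disjoint triples. The remarks about preimages preserving unions and disjointness are exactly the (implicit) justification the paper relies on, so nothing is missing.
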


\begin{proof}
Assume $m\in\caK$ and put $r:= \coars (m)$. Given $\langle a,b|C\rangle\in\caEM$
\begin{eqnarray}
\lefteqn{\hspace*{-1cm}\langle\, \coars (m)\,,\, u_{\langle a,b|C\rangle}\rangle = \langle r, u_{\langle a,b|C\rangle}\rangle =
r(a\cup b\cup C)+ r(C) -r(a\cup C) -r(b\cup C)} \nonumber\\
&=& m(\sigma_{-1}(a\cup b\cup C))+ m(\sigma_{-1}(C)) -m(\sigma_{-1}(a\cup C))
-m(\sigma_{-1}(b\cup C)) \nonumber\\
&=& m(\,\sigma_{-1}(a)\cup \sigma_{-1}(b)\cup \sigma_{-1}(C)\,)+ m(\,\sigma_{-1}(C)\,) -m(\,\sigma_{-1}(a)\cup \sigma_{-1}(C)\,) \nonumber\\
&& -\,\,m(\,\sigma_{-1}(b)\cup \sigma_{-1}(C)\,) ~=~ \langle m, u_{\langle \sigma_{-1}(a),\sigma_{-1}(b)\,|\,\sigma_{-1}(C)\rangle}\rangle\geq 0,
\label{eq.coars-equal}
\end{eqnarray}
because $\langle \sigma_{-1}(a),\sigma_{-1}(b)\,|\,\sigma_{-1}(C)\rangle$ is a triplet of
pairwise disjoint subsets of $N$ and $m\in\caK$.
The formula \eqref{eq.coars-equal} also implies
\eqref{eq.coars-indep} because $i\ci j\,|\,K\,\,[\tilde{m}]$ means
$\langle \tilde{m}, u_{\langle i,j|K\rangle}\rangle=0$.
\end{proof}

Thus, the contraction transformation $\contrS$, where $s\in S\subseteq N$,
can be interpreted as a supermodularity-preserving projection
and also viewed as a complementary transformation to $\liftMN$ where $M=(N\setminus S)\cup s$.
Nevertheless, it does not preserve
the extremality of a supermodular function:
follow Example \ref{exa.minors} with
$m_{*}=2\cdot\delta_{\{ a,b,c,d\}} +\delta_{\{ a,b,c\}} +\delta_{\{ a,b,d\}} +\delta_{\{ a,c,d\}}$,
take $S=\{ c,d\}$, $s=c$, and observe that
$\contrS(m_{*})=(\delta_{\{ a,b,c\}} +\delta_{\{ a,c\}}) +\delta_{\{ a,b,c\}}$,
which is not extreme.

\subsection{Non-linear maximum-based projection}
There is a non-linear supermodularity-preserving mapping from ${\dv R}^{\caP}$ to ${\dv R}^{\caPM}$, $M\subseteq N$.

\begin{defin}\rm\label{def.max-minor}
Given $M\subseteq N$, the {\em max-minor projection\/} is a mapping\\
$\maxminM : {\dv R}^{\caP}\to {\dv R}^{\caPM}$ defined as follows:
\begin{eqnarray}
\lefteqn{\maxminM :\quad m\in {\dv R}^{\caP}~\mapsto ~\maxminM (m)\equiv r\in {\dv R}^{\caPM},}\nonumber \\
&& \mbox{where~ $[\maxminM (m)](S)=r(S) := \max\, \{\, m(S\cup L)\, :L\subseteq N\setminus M\}$ ~ for $S\subseteq M$.~~~}
\label{eq.maxminor-def}
\end{eqnarray}
Given $m\in {\dv R}^{\caP}$ the function $r\equiv\maxminM (m)$ 
will be called the {\em max-minor\/} of $m$ for $M$.
\end{defin}

The motivation for the terminology is clear: one can re-write \eqref{eq.maxminor-def} as follows:
$$
[\maxminM (m)](S) \equiv r(S) = \max_{D,E\subseteq N\setminus M,\, D\cap E=\emptyset} ~~
[\minoDE (m)](S)\qquad \mbox{for $S\subseteq M$},
$$
that is, $r$ is a point-wise maximum on $\caPM$ of all minors of $m$.
It also follows from this interpretation that the max-minor projection can be viewed
as complementary to the lifting $\liftMN$. Clearly, $\maxminM$ is not
a linear mapping, but it preserves supermodularity.

\begin{obser}\label{obs.max-minor}\rm  ~~
If $M\subseteq N$ and $m\in\caK$ then $\maxminM (m)\in\caKM$.
\end{obser}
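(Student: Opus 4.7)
The plan is to mimic the proof of Observation \ref{obs.monotone}: pick optimizers $L_A, L_B \subseteq N\setminus M$ at which the maxima defining $r(A)$ and $r(B)$ are attained, then apply supermodularity of $m$ to the enlarged pair $(A\cup L_A, B\cup L_B)$ in $\caP$, and finally bound the resulting terms by the corresponding maxima defining $r(A\cup B)$ and $r(A\cap B)$.

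More concretely, fix $A,B\subseteq M$ and write $r:=\maxminM(m)$. Because $\caP$ is finite, choose $L_A,L_B\subseteq N\setminus M$ with $r(A)=m(A\cup L_A)$ and $r(B)=m(B\cup L_B)$. The key disjointness observation is that $A,B\subseteq M$ while $L_A,L_B\subseteq N\setminus M$, so
\begin{eqnarray*}
(A\cup L_A)\cup (B\cup L_B) &=& (A\cup B)\cup (L_A\cup L_B),\\
(A\cup L_A)\cap (B\cup L_B) &=& (A\cap B)\cup (L_A\cap L_B),
\end{eqnarray*}
since the ``cross'' intersections $A\cap L_B$ and $L_A\cap B$ are empty.

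Now apply the supermodular inequality for $m\in\caK$ to the sets $A\cup L_A$ and $B\cup L_B$ and substitute the identities above:
$$
m(A\cup L_A)+m(B\cup L_B)\;\leq\; m((A\cup B)\cup (L_A\cup L_B))+m((A\cap B)\cup (L_A\cap L_B)).
$$
Since $L_A\cup L_B$ and $L_A\cap L_B$ both lie in $N\setminus M$, each term on the right is dominated by the corresponding maximum, that is, by $r(A\cup B)$ and $r(A\cap B)$ respectively. Combining gives
$$
r(A)+r(B)=m(A\cup L_A)+m(B\cup L_B)\;\leq\; r(A\cup B)+r(A\cap B),
$$
which is exactly supermodularity of $r$ over $M$.

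I do not anticipate a genuine obstacle: the only subtle point is verifying the union/intersection identities that separate the $M$-part from the $(N\setminus M)$-part, and this rests on the disjointness $M\cap (N\setminus M)=\emptyset$. Everything else is a direct imitation of the maximum-over-subsets monotonization argument already used in Observation \ref{obs.monotone}.
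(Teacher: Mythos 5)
Your proposal is correct and follows essentially the same argument as the paper: choose optimizers $A',B'\subseteq N\setminus M$ for $r(A)$ and $r(B)$, apply supermodularity of $m$ to $A\cup A'$ and $B\cup B'$, rewrite the union and intersection using the disjointness of $M$ and $N\setminus M$, and bound the resulting terms by $r(A\cup B)$ and $r(A\cap B)$. No gap.
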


\begin{proof}
Denote $r:=\maxminM (m)$.
Given $A,B\subseteq M$, there exist $A^{\prime},B^{\prime}\subseteq N\setminus M$ such that
$r(A)=m(A\cup A^{\prime})$ and $r(B)=m(B\cup B^{\prime})$. Thus, we write using supermodularity of $m$,
\begin{eqnarray*}
r(A)+ r(B) &=& m(A\cup A^{\prime})+m(B\cup B^{\prime})\\
&\leq& m (\,(A\cup A^{\prime})\cup (B\cup B^{\prime})\,) + m (\,(A\cup A^{\prime})\cap (B\cup B^{\prime})\,)\\
&=& m (\,(A\cup B)\cup (A^{\prime}\cup B^{\prime})\,) + m (\,(A\cap B)\cup (A^{\prime}\cap B^{\prime})\,)\\
&\leq&
\max_{L\subseteq N\setminus M}\, m(\,(A\cup B)\cup L\,)  ~~+~
\max_{L\subseteq N\setminus M}\, m(\,(A\cap B)\cup L\,)\\
&=&
r(A\cup B) + r(A\cap B)\,,
\end{eqnarray*}
because $A^{\prime}\cup B^{\prime}\subseteq N\setminus M$ and
$A^{\prime}\cap B^{\prime}\subseteq N\setminus M$.
\end{proof}

The above fact has been mentioned in more general context in \cite[Theorem\, 2.7.6]{top98}.
A minimum-based analogue of the discussed transformation
has been applied in the context of submodular functions: it has been named ``partial minimum" in \cite{bac10}
and utilized in the definition of the concept of ``expressibility" in \cite{ZCJ09}.
\smallskip

The max-minor projection does not preserve quantitative equivalence and cannot be interpreted
as a transformation between $\approx$-equivalence classes. Follow Example \ref{exa.monot} with $N=\{ a,b,c\}$, $M=\{ a,b\}$ and
$m_{0}=2\cdot\delta_{\{ a,b,c\}} +\delta_{\{ a,b\}} +\delta_{\{ a,c\}} +\delta_{\{ b,c\}}$.
Then $m_{0}\approx m_{0}-m^{c\subseteq}$ despite
$\maxminM (m_{0})=2\cdot\delta_{\{ a,b\}}+\delta_{a}+\delta_{b}\not\approx \delta_{\{ a,b\}}=
\maxminM (m_{0}-m^{c\subseteq})$.
Neither the max-minor projection preserves extremality of a supermodular function.
Follow Example~\ref{exa.minors} with $N=\{ a,b,c,d\}$,
$m_{*}=2\cdot\delta_{\{ a,b,c,d\}} +\delta_{\{ a,b,c\}} +\delta_{\{ a,b,d\}} +\delta_{\{ a,c,d\}}$
and observe that its max-minor projection $2\cdot\delta_{\{ a,b,c\}} +\delta_{\{ a,b\}} +\delta_{\{ a,c\}}$
to $M=\{ a,b,c\}$ is not extreme.

\section{Product-based composition and modular extensions}\label{sec.modul-ext}
The topic of this section is a special product-based binary operation with set functions which
preserves both supermodularity and extremality. Two linear transformations, interpreted as modular extensions
of set functions, can be derived from that binary operation.

\subsection{Product-based composition}

Consider the cone $\caKw$ of non-negative non-decreasing supermodular functions, which is a pointed polyhedral cone.
The next lemma is a slight generalization
of \cite[Proposition\,4.3]{KSTT12}.

\begin{lem}\rm\label{lem.product}
Let $N$ decompose into non-empty sets $R$ and $L$,
$r\in {\dv R}^{\caPR}$ and $l\in{\dv R}^{\caPL}$.
\begin{itemize}
\item[(i)] If both $r\in\caKRw$ and $l\in\caKLw$ 
then the function $m\in {\dv R}^{\caP}$ defined by
\begin{equation}
m(S) ~:=~ r(S\cap R)\cdot l(S\cap L)\qquad
\mbox{for any $S\subseteq N$,}
\label{eq.def-product}
\end{equation}
belongs to the cone $\caKw$. Conversely, if $m\in\caKw$ is non-zero and \eqref{eq.def-product}
holds with non-negative $r\in {\dv R}^{\caPR}$ and $l\in{\dv R}^{\caPL}$ then $r\in\caKRw$ and $l\in\caKLw$.
\item[(ii)] The function $m$ given by \eqref{eq.def-product} generates an extreme ray of $\caKw$
iff $r$ generates an extreme ray of $\caKRw$ and $l$ generates an the extreme ray of $\caKLw$.
\end{itemize}
\end{lem}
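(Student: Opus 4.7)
Part (i), forward direction: the natural move is to lift both factors to $\caP$. Set $\tilde{r} := \liftMN(r)$ (with $M=R$) and define $\tilde{l}$ analogously from $L$; by Observation~\ref{obs.lift-basic}(i) both are supermodular on $\caP$, and non-negativity and monotonicity transfer trivially because $S\subseteq T$ forces $S\cap R\subseteq T\cap R$. The product formula \eqref{eq.def-product} then reads $m(S)=\tilde{r}(S)\cdot\tilde{l}(S)$, and the supermodularity of such a pointwise product is precisely Lemma~\ref{lem.outer-compo} applied with $g(x,y)=xy$: the one-variable sections are linear and non-decreasing on $[0,\infty)$, and the four-point condition \eqref{eq.outer-compo} amounts to $(x'-x)(y'-y)\ge 0$. (The formulation of Lemma~\ref{lem.outer-compo} uses $\ell$-standardized inputs, but its proof only invokes non-negativity and monotonicity, which $\tilde{r},\tilde{l}$ possess.) Non-negativity and monotonicity of $m$ are immediate, so $m\in\caKw$. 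For the converse, pick $A_0\subseteq R$ and $B_0\subseteq L$ with $r(A_0),l(B_0)>0$, possible since $m\neq 0$ and $r,l\ge 0$. Then $l(B)=m(A_0\cup B)/r(A_0)$ for every $B\subseteq L$, and non-negativity, monotonicity, and supermodularity of $l$ are read off from the corresponding properties of $m$ on sets of the form $A_0\cup B$; the role of $r$ is symmetric.

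Part (ii), ``if'' direction: assume $r,l$ generate extreme rays and $m=m^1+m^2$ with $m^i\in\caKw$. The plan is to slice $m^i$ in the $L$-direction. For each $B\subseteq L$ define $r^i_B(A):=m^i(A\cup B)$ for $A\subseteq R$; a short check (union and intersection of $A\cup B$ and $A'\cup B$ leave the $L$-part fixed at $B$, so supermodularity of $m^i$ restricts to supermodularity of $r^i_B$) shows $r^i_B\in\caKRw$, and $r^1_B+r^2_B=l(B)\cdot r$. If $l(B)>0$, extremality of $r$ gives $r^i_B=\alpha_i(B)\,r$ with $\alpha_i(B)\ge 0$ summing to $l(B)$; if $l(B)=0$ both $r^i_B$ vanish and we set $\alpha_i(B):=0$. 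Choosing any $A_0\subseteq R$ with $r(A_0)>0$ (e.g.\ $A_0=R$), the identity $\alpha_i(B)=m^i(A_0\cup B)/r(A_0)$ realizes $\alpha_i$ as a rescaled slice of $m^i$, from which $\alpha_i\in\caKLw$ follows by the same check. Extremality of $l$ applied to $\alpha_1+\alpha_2=l$ yields $\alpha_i=\beta_i\,l$ with $\beta_i\ge 0$, hence $m^i(S)=\alpha_i(S\cap L)\,r(S\cap R)=\beta_i\,m(S)$, which is the required extremality of $m$.

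For the converse direction of (ii), any decomposition $r=r^1+r^2$ in $\caKRw$ produces, via part (i), a decomposition $m=m^1+m^2$ in $\caKw$ by setting $m^i(S):=r^i(S\cap R)\,l(S\cap L)$; extremality of $m$ forces $m^i=\gamma_i m$, and evaluating at $S=A\cup B$ for a fixed $B$ with $l(B)>0$ yields $r^i=\gamma_i r$. The argument for $l$ is symmetric. The main obstacle is the slicing construction in the ``if'' direction, in particular verifying that $r^i_B$ and $\alpha_i$ land in the correct cones $\caKRw$ and $\caKLw$ and cleanly dispatching the degenerate case $l(B)=0$; once one commits to slicing along $L$ and exploiting an $A_0$ with $r(A_0)>0$ (which is guaranteed by $A_0=R$ because $r$ is non-zero, non-negative, and non-decreasing), the remainder is routine bookkeeping.
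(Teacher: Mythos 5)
Your proof is correct, and its core — part (ii) — is essentially the paper's own argument: you slice the decomposition $m=m^{1}+m^{2}$ along one factor, apply extremality of that factor slice-wise to obtain a coefficient function, verify that this coefficient function lies in the cone over the other factor set, and then invoke extremality of the second factor. You merely swap the roles of $R$ and $L$ (the paper fixes $A\subseteq R$ and uses extremality of $l$ first, then of $r$), and you certify membership of the coefficient function $\alpha_{i}$ in $\caKLw$ directly, as a $\frac{1}{r(A_{0})}$-multiple of a minor of $m^{i}$ (Observation \ref{obs.minor}), whereas the paper routes this through the converse statement of part (i); both treatments of the degenerate slices ($l(B)=0$, resp.\ $r(A)=0$) coincide. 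The genuine divergence is in part (i), forward direction: the paper verifies $\langle m,u_{\langle a,b|C\rangle}\rangle\geq 0$ by a direct three-case computation (mixed pair versus both variables in $R$ or both in $L$), while you lift $r$ and $l$ to $\caP$ via $\liftMN$ and invoke Lemma \ref{lem.outer-compo} with $g(x,y)=xy$ (in the spirit of Corollary \ref{cor.multiplic}). This works, but note that Lemma \ref{lem.outer-compo} is stated for functions in $\caKl$, and the lifted functions are not $\ell$-standardized; you correctly flag that its proof uses only non-negativity, monotonicity and supermodularity, so strictly speaking you are reusing the proof rather than the statement — a mild extension worth making explicit if this were written out. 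Your route buys brevity and re-use of existing machinery; the paper's direct computation is self-contained and also yields the factorized expressions $[\,r(a\cup(C\cap R))-r(C\cap R)\,]\cdot[\,l(b\cup(C\cap L))-l(C\cap L)\,]$ and $l(C\cap L)\cdot\langle r,u_{\langle a,b|C\cap R\rangle}\rangle$ that the paper later exploits when describing induced independency models. Two trivial points you leave implicit but which are fine: $m\neq 0$ in the ``if'' direction of (ii) (since $r(R)>0$ and $l(L)>0$), and the existence of $B$ with $l(B)>0$ in the converse direction of (ii) (since $m\neq 0$).
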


\begin{proof}
(i): in the non-trivial case of non-zero $r$ and $l$ one has $r(R)>0$, $l(L)>0$ and
the observation that $m$ is non-negative and non-decreasing is easy. Consider $\langle a,b|C\rangle\in\caE$
and distinguish three cases of its relation to $R$ and $L$. If $a\in R$ and $b\in L$ then write
\begin{eqnarray*}
\lefteqn{\hspace*{-5mm}\langle m,u_{\langle a,b|C\rangle}\rangle =
m(a\cup b\cup C) -m(a\cup C)-m(b\cup C)+m(C)}\\
&=&
r(a\cup  (C\cap R))\cdot l(b\cup (C\cap L)) - r(a\cup  (C\cap R))\cdot l(C\cap L)\\
&& -\,r(C\cap R)\cdot l(b\cup (C\cap L)) +r( C\cap R)\cdot l(C\cap L)\\
&=& [\, r(a\cup  (C\cap R)) - r(C\cap R)\,]\cdot [\, l(b\cup  (C\cap L)) - l(C\cap L)\,]\geq 0\,,
\end{eqnarray*}
because $r$ and $l$ are non-decreasing. If $a,b\in R$ then write
\begin{eqnarray*}
\lefteqn{\hspace*{-5mm}\langle m,u_{\langle a,b|C\rangle}\rangle =
m(a\cup b\cup C) -m(a\cup C)-m(b\cup C)+m(C)}\\
&=&
r(a\cup b\cup (C\cap R))\cdot l(C\cap L) - r(a\cup  (C\cap R))\cdot l(C\cap L)\\
&& -\,r(b\cup (C\cap R))\cdot l(C\cap L) +r( C\cap R)\cdot l(C\cap L)\\
&=& l(C\cap L)\cdot\langle r,u_{\langle a,b|C\cap R\rangle}\rangle \geq 0\quad
\mbox{since $l$ is non-negative and $r$ supermodular.}
\end{eqnarray*}
The case $a,b\in L$ is analogous, just interchange $R$ and $L$.

As concerns the converse claim in (i) note that $m(N)>0$ implies $r(R)>0$ and $l(L)>0$.
Because the function $A\subseteq R\mapsto m(A\cup L) \stackrel{\eqref{eq.def-product}}{=} r(A)\cdot l(L)$ is
non-decreasing with respect to inclusion, the same holds for $r$. Given $\langle a,b|C\rangle\in\caER$
observe
$$
l(L)\cdot\langle r,u_{\langle a,b|C\rangle}\rangle \stackrel{\eqref{eq.def-product}}{=} \langle m,u_{\langle a,b|C\cup L\rangle}\rangle\geq 0\qquad
\mbox{which implies that $r$ is supermodular.}
$$
The arguments for $l$ are analogous.
\\[0.4ex]
(ii): note that generators of extreme rays must be non-zero and $m\in\caKw$ generates
an extreme ray if $m=m^{1}+m^{2}$ for $m^{1},m^{2}\in\caKw$ implies that $m^{1}=\alpha\cdot m$ and
$m^{2}=\beta\cdot m$ with $\alpha ,\beta\geq 0$. Thus, the necessity of $r$ being extreme follows
from (i) because $r=r^{1}+r^{2}$ for $r^{1},r^{2}\in\caKRw$ implies $m=m^{1}+m^{2}$, where
$m^{i}$ is obtained from $r^{i}$ and $l$ by \eqref{eq.def-product}, $i=1,2$.
Thus, $m^{1}=\alpha\cdot m$ with $\alpha\geq 0$ implies $r^{1}=\alpha\cdot r$ for $l(L)>0$
and similarly with $r^{2}$. The necessity of
$l$  generating an extreme ray of $\caKLw$ is shown analogously.

As concerns the sufficiency assume that $m$ given by \eqref{eq.def-product} is written
$m=m^{1}+m^{2}$ with non-zero $m^{1},m^{2}\in\caKw$. First, we show that non-negative
$r^{1},r^{2}\in {\dv R}^{\caPR}$ exist such that
\begin{equation}
r^{1}+r^{2}=r ~~\&~~ m^{i}(S)=r^{i}(S\cap R)\cdot l(S\cap L),\quad \mbox{for $S\subseteq N$, $i=1,2$.}
\label{eq.proof-product}
\end{equation}
Given $A\subseteq R$ with $r(A)=0$ we put $r^{1}(A)=r^{2}(A)=0$, which works in the case
of any set $S\subseteq N$ with $S\cap R=A$ because then $m(S)=0$ and $m^{1}(S)=m^{2}(S)=0$.
Given $A\subseteq R$ with $r(A)>0$ consider the functions $B\subseteq L \mapsto m^{1}(A\cup B)$.
This non-negative and non-decreasing function is the minor of $m^{1}$ with deleting
of $R\setminus A$ and extracting of $A$, and, therefore, supermodular, by Observation \ref{obs.minor}.
Thus, its $\frac{1}{r(A)}$-multiple belongs to $\caKLw$.
Of course, the same conclusion holds in case of $B\subseteq L \mapsto m^{2}(A\cup B)$.
Now, realize that
\begin{eqnarray*}
\lefteqn{\hspace*{-3cm}\forall\, B\subseteq L\qquad r(A)\cdot l(B) \stackrel{\eqref{eq.def-product}}{=} m(A\cup B) =m^{1}(A\cup B)+m^{2}(A\cup B)}\\
&\Rightarrow& l(B)=\underbrace{\frac{1}{r(A)}\cdot m^{1}(A\cup B)}_{\in\caKLw} +\underbrace{\frac{1}{r(A)}\cdot m^{2}(A\cup B)}_{\in\caKLw}\,,
\end{eqnarray*}
and the assumption that $l$ generates an extreme ray of $\caKLw$ implies the existence of constants
$r^{1}(A),r^{2}(A)\geq 0$ such that $m^{i}(A\cup B)=r^{i}(A)\cdot l(B)$ for $B\subseteq L$, $i=1,2$. Then
$$
r(A)\cdot l(L) \stackrel{\eqref{eq.def-product}}{=} m(A\cup L) =m^{1}(A\cup L)+m^{2}(A\cup L)=
r^{1}(A)\cdot l(L)+ r^{2}(A)\cdot l(L)
$$
implies $r(A)=r^{1}(A)+r^{2}(A)$ because $l(L)>0$. Thus, this choice of $r^{1}(A)$ and $r^{2}(A)$,
ensures that \eqref{eq.proof-product} holds for $S\subseteq N$ with $S\cap R=A$.

The condition \eqref{eq.proof-product} says that non-zero $m^{1}$ is obtained from non-negative $r^{1}$ and $l$ by  \eqref{eq.def-product}.
Hence, by the second observation in (i), $r^{1}\in\caKRw$. Analogously,  $r^{2}\in\caKRw$ and
the assumption that $r$ generates an extreme ray of $\caKRw$ implies that $r^{1}=\alpha\cdot r$ and
$r^{2}=\beta\cdot r$ with $\alpha,\beta\geq 0$. Hence, \eqref{eq.proof-product} and \eqref{eq.def-product}
imply the desired conclusion about $m$, $m^{1}$ and $m^{2}$.
\end{proof}

To apply Lemma \ref{lem.product} realize what are the generators of extreme rays of $\caKw$. They break into three types, namely
the constant function, singleton superset identifiers, and the generators of extreme rays of the cone
$\caKl$ of $\ell$-standardized supermodular functions.

The composition with the constant function $m^{\emptyset\subseteq}$ is, in fact, the lifting transformation
discussed already in \S\,\ref{ssec.lift-tran}. The next corollary is devoted to the composition of an
extreme $\ell$-standardized function with a singleton subset identifier;
it leads to a linear transformation discussed later in \S\,\ref{ssec.lower-modular}.
The observation has already been mentioned in \cite[Lemma\,4.4]{KSTT12}.

\begin{cor}\rm\label{cor.prod-modular} ~
Assume $|N|\geq 2$, $x\in N$ and $M:= N\setminus x$. Given $r\in\caKMl$, let $m^{\prime}\in {\dv R}^{\caP}$ be defined
as follows:
\begin{equation}
m^{\prime}(S) := \left\{\
\begin{array}{cl}
r(S\cap M) & \mbox{if $x\in S$,}\\
0 & \mbox{otherwise,}
\end{array}
\right.
\qquad \mbox{for any $S\subseteq N$}.
\label{eq.prod-modular}
\end{equation}
Then $m^{\prime}\in\caKl$ and, moreover, $r$ is extreme in $\caKMl$ iff $m^{\prime}$ is extreme in $\caKl$.
\end{cor}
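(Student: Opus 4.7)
My plan is to realize $m'$ as a product-based composition covered by Lemma \ref{lem.product}, taking $R := M$, $L := \{x\}$, and $l \in {\dv R}^{{\cal P}(\{x\})}$ the singleton subset indicator with $l(\emptyset)=0$ and $l(\{x\})=1$. For $S\subseteq N$ the product $r(S\cap M)\cdot l(S\cap\{x\})$ equals $r(S\cap M)$ when $x\in S$ and $0$ otherwise, so it coincides with $m'$ from \eqref{eq.prod-modular}. This identification is the pivotal observation; everything else reduces to applying Lemma \ref{lem.product} plus bookkeeping.

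For the first assertion I verify the hypotheses of Lemma \ref{lem.product}(i): every $\ell$-standardized supermodular function is non-negative and non-decreasing (the basic observation from \S\,\ref{sec.standardizations}), so $r\in\caKRw$; and $l\in\caKLw$ since its two values are non-negative and monotone while supermodularity is vacuous on a two-element power set. Lemma \ref{lem.product}(i) then gives $m'\in\caKw$, in particular supermodular. The $\ell$-standardization conditions are immediate from the definition of $m'$: $m'(\emptyset)=0$ and $m'(\{i\})=0$ for $i\in M$ because $x$ lies in neither set, while $m'(\{x\})=r(\emptyset)=0$ by the $\ell$-standardization of $r$. Hence $m'\in\caKl$.

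For the extremality part, Lemma \ref{lem.product}(ii) yields that $m'$ generates an extreme ray of $\caKw$ iff both $r$ generates an extreme ray of $\caKRw$ and $l$ generates an extreme ray of $\caKLw$. The cone $\caKLw$ on the singleton $\{x\}$ is two-dimensional with extreme rays spanned by the constant function and by our $l$, so the second condition holds automatically. Thus $m'$ is extreme in $\caKw$ iff $r$ is extreme in $\caKRw$.

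The remaining and most delicate step is to transfer extremality back from $\caKw$ to $\caKl$, and analogously on $M$. Here I will invoke the three-type classification of extreme rays of $\caKw$ recorded just before the corollary: apart from the constant $m^{\emptyset\subseteq}$ and the singleton superset identifiers $m^{\{i\}\subseteq}$ for $i\in N$, the remaining extreme rays of $\caKw$ are precisely the generators of extreme rays of $\caKl$. Since a non-zero element of $\caKl$ cannot coincide with $m^{\emptyset\subseteq}$ or any $m^{\{i\}\subseteq}$ (none is $\ell$-standardized), extremality in $\caKw$ and in $\caKl$ agree on $\caKl\setminus\{0\}$, and the analogous statement holds with $M$ in place of $N$. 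The edge case $r=0$ gives $m'=0$, so neither is extreme, which closes the argument. The whole proof is thus short once one recognizes the product structure; the conceptual work is packaged in Lemma \ref{lem.product} and in the enumeration of extreme rays of $\caKw$.
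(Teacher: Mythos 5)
Your proposal is correct and follows essentially the same route as the paper: identify $m^{\prime}$ as the product-based composition of $r$ with $l=m^{x\subseteq}$ for $R=M$, $L=\{x\}$, and apply Lemma \ref{lem.product}. Your explicit transfer of extremality between $\caKw$ and $\caKl$ via the three-type classification of extreme rays of $\caKw$ is exactly the step the paper leaves implicit in ``the rest follows from Lemma \ref{lem.product}''.
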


\begin{proof}
Apply Lemma \ref{lem.product} with $R=M$, $L=x$ and $l=m^{x\subseteq}$. Then $m$ defined by
\eqref{eq.def-product} coincides with $m^{\prime}$ given by \eqref{eq.prod-modular} and is evidently $\ell$-standardized.
The rest follows from Lemma \ref{lem.product}.
\end{proof}
\smallskip

The product-based combination of two $\ell$-standardized supermodular functions is treated in another
corollary, which is, basically, the result from \cite[Proposition\,4.3]{KSTT12}.

\begin{cor}\rm\label{cor.prod-general} ~
Let $N$ decompose into non-empty sets $R$ and $L$.
Assume that $r\in\caKRl$, $l\in\caKLl$ and the function $m\in {\dv R}^{\caP}$ is defined
by \eqref{eq.def-product}. Then $m\in\caKl$, and, moreover, $m$ is extreme in $\caKl$
iff [\,$r$ is extreme in $\caKRl$ and $l$ extreme in $\caKLl$\,].
\end{cor}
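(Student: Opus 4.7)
The plan is to reduce the claim to Lemma \ref{lem.product} by bridging the $\ell$-standardization with the cone $\caKw$ of non-negative non-decreasing supermodular functions.

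First I would verify that $m\in\caKl$. Since $r\in\caKRl$ and $l\in\caKLl$ are $\ell$-standardized, they are non-negative and non-decreasing by the basic observation in \S\,\ref{sec.standardizations}, so $r\in\caKRw$ and $l\in\caKLw$. Lemma \ref{lem.product}(i) then gives $m\in\caKw\subseteq\caK$. To see $m\in\caSl$, note $m(\emptyset)=r(\emptyset)\cdot l(\emptyset)=0$, and for any $i\in N$ exactly one of $\{i\}\cap R$, $\{i\}\cap L$ is empty, so the corresponding factor vanishes and $m(\{i\})=0$.

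The key observation for extremality is the following bridge: whenever $f$ is an $\ell$-standardized supermodular function on some variable set, $f$ generates an extreme ray of the $\ell$-standardized cone iff $f$ generates an extreme ray of the larger cone of non-negative non-decreasing supermodular functions. The easy direction uses that the $\ell$-standardized cone is contained in the larger one. For the converse, if $f=f^{1}+f^{2}$ is any decomposition with $f^{i}$ non-negative non-decreasing supermodular, then the non-negativity of both summands together with $f(\emptyset)=0$ forces $f^{i}(\emptyset)=0$, and similarly $f^{i}(\{j\})=0$ for every $j$ in the variable set; hence $f^{i}$ is itself $\ell$-standardized, and extremality of $f$ in the $\ell$-cone supplies the required multiples. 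Applying this bridge to $m$, $r$, $l$ simultaneously, and then invoking Lemma \ref{lem.product}(ii), one obtains $m$ extreme in $\caKl \Leftrightarrow m$ extreme in $\caKw \Leftrightarrow [\,r$ extreme in $\caKRw$ and $l$ extreme in $\caKLw\,] \Leftrightarrow [\,r$ extreme in $\caKRl$ and $l$ extreme in $\caKLl\,]$, which is the desired equivalence.

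The only non-routine step is the bridge in the second paragraph; once it is noticed that $\ell$-standardization at $\emptyset$ and singletons, combined with non-negativity in $\caKw$, is rigid enough to trap every summand back inside $\caKl$, the proof is a direct translation of Lemma \ref{lem.product}(ii). Nothing needs to be re-proved about the product structure or about supermodularity itself.
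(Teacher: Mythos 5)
Your proof is correct, and its core is the same reduction the paper uses: everything is funneled through Lemma~\ref{lem.product}. The difference is in how the passage between extremality in $\caKl$ and extremality in $\caKw$ is justified. The paper's proof of Corollary~\ref{cor.prod-general} is a one-liner that leans on the preceding (unproved) remark classifying the generators of extreme rays of $\caKw$ into three types (the constant function, the singleton superset identifiers $m^{\upi}$, and the generators of extreme rays of $\caKl$). You instead prove exactly the bi-implication that is needed: for an $\ell$-standardized $f$, any decomposition $f=f^{1}+f^{2}$ inside $\caKw$ is trapped in $\caKl$, because non-negativity of the summands together with $f(\emptyset)=0$ and $f(i)=0$ forces $f^{i}(\emptyset)=f^{i}(i)=0$; the converse direction is immediate from $\caKl\subseteq\caKw$. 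This "trapping" argument is self-contained, avoids the full classification of extreme rays of $\caKw$, and is in fact the honest content behind the paper's remark, so your route is slightly more economical while buying the same conclusion; the paper's route, once the three-type classification is accepted, dispatches the corollary with no further work. Your verification that $m\in\caSl$ (via $r(\emptyset)=l(\emptyset)=0$) and the use of Lemma~\ref{lem.product}(i) for supermodularity are both as in the paper, and the chain of equivalences you give is complete, including the degenerate zero cases.
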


\begin{proof}
This follows directly from Lemma \ref{lem.product}, taking into account
what was said above.
\end{proof}
\smallskip

Note that the product-based composition as a tool for generating ``new"
types of extreme supermodular functions only applies in the case $|N|\geq 4$; this
is because non-zero extreme $\ell$-standardized supermodular functions exist if they are at least two variables. In the case $|N|=4$ the result of product-based composition is equivalent to the application of the lower modular extension from \S\,\ref{ssec.lower-modular}.
However, in the case $|N|\geq 5$ its application results in non-trivial types
of extreme supermodular functions.

The specialty of this operation is that,
if applied to extreme supermodular functions with zero-one table of scalar products
(see Definition \ref{def.CI-induced}) it may result in an extreme supermodular function whose table of scalar
products has more than two distinct values. None of other transformations
discussed in this report has this property.

\subsection{Lower modular extension}\label{ssec.lower-modular}
The extension of $\ell$-standardized functions mentioned in Corollary \ref{cor.prod-modular}
is a special case of the following linear transformation.

\begin{defin}\rm\label{def.lower-exten}
Let $M,N$ be sets of variables such that $\emptyset\neq M\subset N$.
The {\em lower modular extension\/} transformation  $\lowmod : {\dv R}^{\caPM}\to {\dv R}^{\caP}$ is
defined as follows:
\begin{eqnarray}
\lefteqn{\hspace*{-1.6cm}\lowmod :
r\in {\dv R}^{\caPM}\mapsto\, \lowmod (r)\equiv m\in {\dv R}^{\caP},~~\mbox{where}} \nonumber\\[0.2ex]
m(S) &:=& \left\{\
\begin{array}{cl}
r(S\cap M) & \mbox{if $(N\setminus M)\subseteq S$,}\\
r(S\cap M)-r_{\ell}(S\cap M) & \mbox{otherwise,}
\end{array}
\right.
\quad \mbox{for any $S\subseteq N$}.
\label{eq.low-exten-def}
\end{eqnarray}
\end{defin}
\bigskip

Observe that the difference $r-r_{\ell}\in {\dv R}^{\caPM}$ is a modular function: specifically, \eqref{eq.l-stan} says
$$
r-r_{\ell}=
r(\emptyset)\cdot m^{\emptyset\subseteq}+\sum_{i\in M} \,[r(i)-r(\emptyset)]\cdot m^{i\subseteq}\in \caLM\
$$
and one can interpret $r-r_{\ell}$ as the lower modular part of $r$. This perhaps explains the motivation for the term {\em lower modular extension}: one basically ``copies" $r$ to the ``upper" class of sets
$(N\setminus M)\cup\caPM :=\{\, (N\setminus M)\cup L\,:\ L\subseteq M\}$
and extends this copy of $r$ by the copies of its lower modular part ``written" to all remaining ``lower" classes $R\cup\caPM$, $R\subset (N\setminus M)$.
In particular, the minor of $m=\lowmod (r)$ with extracting of $N\setminus M$ is again $r$, which means that the extraction minor projection from \S\,\ref{ssec.minors} is a complementary operation to $\lowmod$.
Note that $\lowmod$ maps linearly $\caLM$ to $\caL$, specifically
$$
\alpha_{\emptyset}\cdot m^{\emptyset\subseteq}
+\sum_{i\in M} \,\alpha_{i}\cdot m^{i\subseteq}\in\caLM
~\stackrel{\lowmod}{\longmapsto}~
\alpha_{\emptyset}\cdot m^{\emptyset\subseteq}
+\sum_{i\in M} \,\alpha_{i}\cdot m^{i\subseteq}\in\caL
\quad \mbox{for $\alpha_{\emptyset},\alpha_{i}\in {\dv R}$},
$$
where the pre-image is a function in ${\dv R}^{\caPM}$ while the image,
given by the same formal expression, is viewed as a function in ${\dv R}^{\caP}$.
Hence, $r^{1}\approx r^{2}$ implies
$\lowmod (r^{1})\approx \lowmod (r^{2})$ and the lower modular extension is a transformation between equivalence classes of\, $\approx$.

Without loss of generality, one can limit oneself to the case
$|N\setminus M|=1$ because repeated lower modular extension is equivalent to
lower modular extension with a larger set $N\setminus M$.
Indeed, if $N\setminus M=\{ x_{1},x_{2}\}$ then one first applies $\lowmod$
with $N\setminus M=x_{1}$ to $r$ and gets $r^{\prime}$ on ${\cal P}(M^{\prime})$ where $M^{\prime}=M\cup x_{1}$. Observe that the lower modular part of $r^{\prime}$
can be interpreted as two copies of the lower modular part of $r$
``written" to $\caPM$ and $x_{1}\cup\caPM$. Therefore, the application of
$\lowmod$ with $N\setminus M^{\prime}=x_{2}$ to $r^{\prime}$
has then the same effect as the application of $\lowmod$ to $r$ with $N\setminus M=\{x_{1},x_{2}\}$.

\begin{obser}\label{obs.lower-exten}\rm  ~~
Assume $|N|\geq 2$, $x\in N$ and $M:= N\setminus x$ and $r\in {\dv R}^{\caPM}$.
\begin{itemize}
\item[(i)]
One has $m=\lowmod (r)\in\caK$ iff $r\in\caKM$. The independency model
induced by $\lowmod (r)$ is then determined as follows: given $\langle a,b|C\rangle\in\caE$
one has
\begin{equation}
a\ci b\,|\,C\,\,[m] ~~\Leftrightarrow ~~
\left\{
\begin{array}{l}
\mbox{either}~~ x\not\in (a\cup b\cup C)\,,\\
\mbox{or}~~  x\in\{ a,b\} ~\&~~ r_{\ell}(\,(a\cup b\cup C)\setminus x)=r_{\ell}(C)\,,\\
\mbox{or}~~  x\in C ~\&~ a\ci b\,|\, (C\setminus x)\,\, [r]\,.
\end{array}
\right.
\label{eq.lower-exten}
\end{equation}
\item[(ii)] The function $r$ is extreme in  $\caKM$ iff $m=\lowmod (r)$ is extreme in $\caK$.
\end{itemize}
\end{obser}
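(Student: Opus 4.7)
My approach is a direct case analysis on where $x$ sits relative to $\langle a, b | C\rangle \in \caE$, and these cases will correspond exactly to the three disjuncts of \eqref{eq.lower-exten}. If $x \notin a\cup b\cup C$, then all four arguments of $u_{\langle a,b|C\rangle}$ are subsets of $M$, and by \eqref{eq.low-exten-def} each of them carries the ``otherwise'' branch $r - r_\ell$; since $r - r_\ell \in \caLM$ by the expression for $r_\ell$ noted after Definition \ref{def.lower-exten}, the inner product vanishes identically. If $x \in C$, all four arguments contain $x$, so $m$ evaluates to $r(\cdot \cap M)$ on each, and the inner product collapses to $\langle r, u_{\langle a,b|C\setminus x\rangle}\rangle$. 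If $x \in \{a, b\}$, say $x = a$, then exactly the two sets containing $a$ carry the $x \in S$ branch; a short cancellation gives that the inner product equals $r_\ell(b\cup C) - r_\ell(C) = r_\ell((a\cup b\cup C)\setminus x) - r_\ell(C)$.

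These three expressions are non-negative exactly when $r \in \caKM$: Case~1 is always zero, Case~2 gives $\langle r, u_{\langle a, b | C\setminus x\rangle}\rangle \geq 0$ because $\langle a, b | C\setminus x\rangle \in \caEM$, and Case~3 is non-negative by the basic fact from \S\ref{sec.standardizations} that $\ell$-standardized supermodular functions are non-decreasing. Conversely, if $m \in \caK$ then specializing Case~2 to triplets $\langle a, b | C \cup x\rangle$ with $\langle a, b | C\rangle \in \caEM$ yields $\langle r, u_{\langle a, b | C\rangle}\rangle \geq 0$, so $r \in \caKM$. The equality conditions in the three cases translate directly into the three clauses of \eqref{eq.lower-exten}.

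\textbf{Plan for part (ii).} The idea is to reduce to Corollary \ref{cor.prod-modular}. Since extremality is an invariant of the $\approx$-equivalence class (discussion following Corollary \ref{cor.isom-stan}), the first step is to show $\lowmod(r) \approx \lowmod(r_\ell)$. Inspecting \eqref{eq.low-exten-def} separately for $S$ with $x \in S$ and for $S$ with $x \notin S$ (using $(r_\ell)_\ell = r_\ell$), one finds $\lowmod(r)(S) - \lowmod(r_\ell)(S) = (r - r_\ell)(S \cap M)$ in both cases, so $\lowmod(r) - \lowmod(r_\ell) = \liftMN(r - r_\ell) \in \caL$ because $r - r_\ell \in \caLM$ and the lifting of a modular function is modular. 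Thus one may replace $r$ by $r_\ell$ on both sides.

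Now, for $\ell$-standardized $r_\ell$ the ``otherwise'' branch of \eqref{eq.low-exten-def} vanishes, and $\lowmod(r_\ell)$ coincides pointwise with the function $m'$ of \eqref{eq.prod-modular}; moreover $\lowmod(r_\ell) \in \caKl$, as checked on the empty set and singletons. Corollary \ref{cor.prod-modular} then gives the equivalence of extremality between $r_\ell$ in $\caKMl$ and $\lowmod(r_\ell)$ in $\caKl$, and combining this with $\approx$-invariance of extremality on each side closes (ii). The main obstacle is really the reduction $\lowmod(r) \approx \lowmod(r_\ell)$, because \eqref{eq.low-exten-def} is piecewise and the modular shift $r - r_\ell$ has to be tracked separately across the two branches; once this and the identification with \eqref{eq.prod-modular} are in place, (ii) follows entirely from Corollary \ref{cor.prod-modular}.
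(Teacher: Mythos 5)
Your proof is correct. For part (ii) you follow essentially the paper's own route: you isolate the modular discrepancy $\lowmod(r)-\lowmod(r_\ell)=\liftMN(r-r_\ell)\in\caL$, identify $\lowmod(r_\ell)$ with the zero extension $m'$ of \eqref{eq.prod-modular}, and invoke Corollary \ref{cor.prod-modular} together with the $\approx$-invariance of extremality; this is exactly the decomposition $m=l+m'$ that the paper writes out. Where you diverge is part (i): the paper also proves supermodularity and the independency model through that same decomposition (getting $m'\in\caKl$ from Corollary \ref{cor.prod-modular} and $m\sim m'$, with necessity of $r\in\caKM$ coming from the extraction minor, Observation \ref{obs.minor}), whereas you compute $\langle m,u_{\langle a,b|C\rangle}\rangle$ directly in the three cases $x\notin a\cup b\cup C$, $x\in\{a,b\}$, $x\in C$. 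Your computation is sound — the three values $0$, $r_\ell((a\cup b\cup C)\setminus x)-r_\ell(C)$, and $\langle r,u_{\langle a,b|C\setminus x\rangle}\rangle$ are exactly right, and your necessity argument via the triplets $\langle a,b|C\cup x\rangle$ is the minor-projection argument in disguise. The payoff of your version is that formula \eqref{eq.lower-exten} drops out transparently from the case split, whereas the paper's terser derivation leaves the reader to unwind the independency model of $m'$; the cost is that you redo by hand a supermodularity check that Corollary \ref{cor.prod-modular} already supplies. One point worth making explicit in your Case 3 is that non-negativity there uses that $r_\ell$ is non-decreasing, which is available only under the hypothesis $r\in\caKM$ — you do cite the right fact from \S\ref{sec.standardizations}, so this is a presentational remark, not a gap.
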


\begin{proof}
(i): Observation \ref{obs.minor} implies the necessity of $r=T_{|x}(m)\in\caKM$ for $m\in\caK$.
As concerns the sufficiency realize that one can re-write \eqref{eq.low-exten-def} in the form
$$
m(S) = \left\{\
\begin{array}{cl}
r(\emptyset) +\sum_{i\in S\cap M} [r(i)-r(\emptyset)] +r_{\ell}(S\cap M)& \mbox{if $x\in S$,}\\
r(\emptyset) +\sum_{i\in S\cap M} [r(i)-r(\emptyset)] & \mbox{if $x\not\in S$,}
\end{array}
\right.
\quad \mbox{for $S\subseteq N$},
$$
which means $m=l+m^{\prime}$, where
$l=r(\emptyset)\cdot m^{\emptyset\subseteq}+\sum_{i\in M} \,[r(i)-r(\emptyset)]\cdot m^{i\subseteq}\in \caL$
and $m^{\prime}=\lowmod (r_{\ell})$ is the ``lower" zero extension of $r_{\ell}$ defined by \eqref{eq.prod-modular}
in Corollary \ref{cor.prod-modular}. This allows us to deduce \eqref{eq.lower-exten}: since $m_{\ell}\approx m^{\prime}$ one has
$a\ci b\,|\,C\,[m]$ iff $a\ci b\,|\,C\,\, [m^{\prime}]$ for any $\langle a,b|C\rangle\in\caE$. Moreover,
Corollary \ref{cor.prod-modular} says $ m^{\prime}\in\caKl$, which implies $m\in\caK$.\\[0.4ex]
(ii):  Corollary \ref{cor.isom-stan} says that the extremality in $\caK$ reduces to the extremality in $\caKl$ and Corollary \ref{cor.prod-modular} claims that $r_{\ell}$ is extreme in $\caKMl$ iff its extension $m^{\prime}$ is extreme in $\caKl$.
\end{proof}

\begin{example}\label{exa.lower-exten}
Take $N=\{ a,b,c,d\}$, $M=\{ a,b,c\}$ and $\bar{r}=\delta_{\{ a,b,c\}}\in {\dv R}^{\caPM}$, which is extreme
in $\caKM$. As $\bar{r}$ itself is $\ell$-standardized, its lower modular extension
$m=\delta_{\{ a,b,c,d\}}\in {\dv R}^{\caP}$ is also $\ell$-standardized. Observation \ref{obs.lower-exten}(ii) says
$m$ is extreme in $\caK$. The extremality of $m$ also follows from Corollary \ref{cor.prod-general}
with $R=\{ a,b\}$, $L=\{ c,d\}$ , $r=\delta_{\{ a,b\}}$ and $l=\delta_{\{ c,d\}}$.
\end{example}

\subsection{Upper modular extension}\label{ssec.upper-modular}
One can combine the lower modular extension from \S\,\ref{ssec.lower-modular} with
the reflection transformation from \S\,\ref{ssec.reflection}, which leads to the following concept.

\begin{defin}\rm\label{def.upper-exten}
Let $M,N$ be sets of variables such that $\emptyset\neq M\subset N$.
The {\em upper modular extension\/} transformation  $\uppmod : {\dv R}^{\caPM}\to {\dv R}^{\caP}$ is
defined as follows:
\begin{eqnarray}
\lefteqn{\hspace*{-1.6cm}\uppmod :
r\in {\dv R}^{\caPM}\mapsto\, \uppmod (r)\equiv m\in {\dv R}^{\caP},~~\mbox{where}} \nonumber\\[0.2ex]
m(S) &:=& \left\{\
\begin{array}{cl}
r(S) & ~~\mbox{if $S\subseteq M$,}\\
r(S\cap M)-r_{u}(S\cap M) & ~~\mbox{otherwise,}
\end{array}
\right.
\quad \mbox{for any $S\subseteq N$}.
\label{eq.upp-exten-def}
\end{eqnarray}
\end{defin}
\bigskip

The difference $r-r_{u}\in {\dv R}^{\caPM}$ is a modular function and can be interpreted as an upper modular part of $r$.
Thus, the term {\em upper modular extension\/} has similar motivation as the term lower modular extension from \S\,\ref{ssec.lower-modular}.
The formula \eqref{eq.upp-exten-def} can be interpreted as follows:
one ``copies" $r$ to $\caPM$ and then extends this copy of $r$ by the copies
of its upper modular extension $r-r_{u}$ ``written" to all remaining classes
$R\cup\caPM$, where $\emptyset\neq R\subseteq N\setminus M$.
As concerns the expression for $r-r_{u}$, one has
$$
r(S\cap M)-r_{u}(S\cap M) \stackrel{\eqref{eq.u-stan}}{=}
\sum_{i\in M\setminus S} r(M\setminus i) + (1-|M\setminus S|)\cdot r(M)\quad
\mbox{for any $S\subseteq N$}.
$$
It follows from \eqref{eq.upp-exten-def} that the restriction of $m=\uppmod (r)$
to $\caPM$ is again $r$; hence, the deletion minor projection $T_{-(N\setminus M)}$ is a complementary operation to $\uppmod$.

Because $m^{\emptyset\subseteq},m^{j\subseteq}\in {\dv R}^{\caPM}$, for $j\in M$,
are ascribed $m^{\emptyset\subseteq},m^{j\subseteq}\in {\dv R}^{\caP}$ by $\uppmod$,
the transformation maps linearly $\caLM$ to $\caL$. This says $r^{1}\approx r^{2} ~\Rightarrow~ \uppmod (r^{1})\approx \uppmod (r^{2})$
and the upper modular extension can be interpreted as
a transformation of $\approx$-equivalence classes.
Without loss of generality, one can limit oneself to the case
$|N\setminus M|=1$ since repeated upper modular extension is equivalent to
upper modular extension with a larger set $N\setminus M$; the arguments are
analogous to those used in the case of lower modular extension.

The relation of $\uppmod$ and $\lowmod$ can be described as follows. The
application of the upper modular extension $\uppmod$ followed by the application of the reflection transformation on ${\dv R}^{\caP}$ (see \S\,\ref{ssec.reflection}) has the same effect as the application of the reflection transformation on ${\dv R}^{\caPM}$ followed by the application of the lower modular transformation $\lowmod$.
Indeed, this observation follows from the above descriptions of both transformations
in terms of ``copies" to classes $R\cup\caPM$, $R\subseteq N\setminus M$ and the
formula $T_{\iota}(r-r_{u})= T_{\iota}(r)- T_{\iota}(r_{u}) =
T_{\iota}(r)- (T_{\iota}(r))_{\ell}$; see the remark concluding \S\,\ref{ssec.reflection}.
An analogous statement holds with interchanged $\uppmod$ and $\lowmod$, that is,
one has
\begin{equation}
\forall\,r\in {\dv R}^{\caPM}\qquad
T_{\iota}(\uppmod (r)) =\lowmod (T_{\iota}(r)) ~~\&~~ T_{\iota}(\lowmod (r)) =\uppmod (T_{\iota}(r))\,.
\label{eq.dual-modular}
\end{equation}
Owing to the relation of $\uppmod$ and $\lowmod$ the following observation
can easily be derived from Observation \ref{obs.lower-exten}; a special case
of it has been shown in \cite[Lemma\,4.5]{KSTT12}.

\begin{obser}\label{obs.upper-exten}\rm  ~~
Assume $|N|\geq 2$, $x\in N$ and $M:= N\setminus x$ and $r\in {\dv R}^{\caPM}$.
\begin{itemize}
\item[(i)]
One has $m=\uppmod (r)\in\caK$ iff $r\in\caKM$. The independency model
induced by $\uppmod (r)$ is then determined as follows: given $\langle a,b|C\rangle\in\caE$
one has
\begin{equation}
a\ci b\,|\,C\,\,[m] ~~\Leftrightarrow ~~
\left\{
\begin{array}{l}
\mbox{either}~~ x\in C\,,\\
\mbox{or}~~  x\in\{ a,b\} ~\&~~ r_{u}(\,(a\cup b\cup C)\setminus x)=r_{u}(C)\,,\\
\mbox{or}~~  x\not\in (a\cup b\cup C) ~\&~ a\ci b\,|\, C\,\, [r]\,.
\end{array}
\right.
\label{eq.upper-exten}
\end{equation}
\item[(ii)] The function $r$ is extreme in  $\caKM$ iff $m=\uppmod (r)$ is extreme in $\caK$.
\end{itemize}
\end{obser}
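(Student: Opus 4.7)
The plan is to derive both (i) and (ii) from the corresponding statements in Observation~\ref{obs.lower-exten} by exploiting the duality identity
$$
T_{\iota}(\uppmod(r)) = \lowmod(T_{\iota}(r))
$$
established in \eqref{eq.dual-modular}, together with the fact that $T_{\iota}$ is a self-inverse bijection of ${\dv R}^{\caP}$ (resp.\ ${\dv R}^{\caPM}$) which preserves supermodularity, extremality and exchanges the $\ell$- and $u$-standardizations, as recorded in Observation~\ref{obs.reflect} and the remark concluding \S\,\ref{ssec.reflection}.

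For the first assertion of (i), I chain the equivalences: $r\in\caKM$ iff $T_{\iota}(r)\in\caKM$ (by Observation~\ref{obs.reflect}(i) on $M$), iff $\lowmod(T_{\iota}(r))\in\caK$ (by Observation~\ref{obs.lower-exten}(i)), iff $T_{\iota}(\uppmod(r))\in\caK$ (by \eqref{eq.dual-modular}), iff $\uppmod(r)\in\caK$ (by Observation~\ref{obs.reflect}(i) on $N$).

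For the independency-model part of (i), denote $r':= T_{\iota}(r)$ and $C':= N\setminus(a\cup b\cup C)$. Using Observation~\ref{obs.reflect}(i) and \eqref{eq.dual-modular},
$$
a\ci b\,|\,C\,[\uppmod(r)] ~\Longleftrightarrow~ a\ci b\,|\,C'\,[T_{\iota}(\uppmod(r))] ~\Longleftrightarrow~ a\ci b\,|\,C'\,[\lowmod(r')].
$$
Now I apply \eqref{eq.lower-exten} to $\lowmod(r')$ and translate its three cases back. The first case $x\notin a\cup b\cup C'$ is the same as $x\in C$. For the third case $x\in C'$ reads $x\notin a\cup b\cup C$; moreover $\langle a,b\,|\,C'\setminus x\rangle\in\caEM$ and a direct set-theoretic computation shows $M\setminus(a\cup b\cup(C'\setminus x))=C$, so by Observation~\ref{obs.reflect}(i) applied to $r'=T_{\iota}(r)$ on $M$, $a\ci b\,|\,C'\setminus x\,[r']$ is equivalent to $a\ci b\,|\,C\,[r]$. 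For the middle case $x\in\{a,b\}$, the remark in \S\,\ref{ssec.reflection} gives $r'_{\ell}=(T_{\iota}(r))_{\ell}=T_{\iota}(r_{u})$, i.e.\ $r'_{\ell}(S)=r_{u}(M\setminus S)$ for $S\subseteq M$; since $C\subseteq M$ and $(a\cup b\cup C)\setminus x\subseteq M$, one computes $M\setminus C'=(a\cup b\cup C)\setminus x$ and $M\setminus((a\cup b\cup C')\setminus x)=C$, so $r'_{\ell}((a\cup b\cup C')\setminus x)=r'_{\ell}(C')$ becomes $r_{u}(C)=r_{u}((a\cup b\cup C)\setminus x)$, matching \eqref{eq.upper-exten}.

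Part (ii) is then a short chain: $r$ is extreme in $\caKM$ iff $T_{\iota}(r)$ is extreme in $\caKM$ (Observation~\ref{obs.reflect}(ii)), iff $\lowmod(T_{\iota}(r))$ is extreme in $\caK$ (Observation~\ref{obs.lower-exten}(ii)), iff $T_{\iota}(\uppmod(r))$ is extreme in $\caK$ (by \eqref{eq.dual-modular}), iff $\uppmod(r)$ is extreme in $\caK$ (Observation~\ref{obs.reflect}(ii) again). The main obstacle is purely bookkeeping: carefully matching up the three cases of the independency condition under the reflection, in particular verifying that complementing in $N$ turns $x\in C$ into $x\notin a\cup b\cup C'$, preserves $x\in\{a,b\}$, and that the $\ell$-standardization identity for $\lowmod$ becomes the $u$-standardization identity for $\uppmod$ via $r'_{\ell}=T_{\iota}(r_{u})$; everything else is automatic from the already-established duality.
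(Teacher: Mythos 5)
Your proof is correct, and for the supermodularity equivalence in (i) and for part (ii) it is literally the paper's argument: reflect on $M$, apply Observation~\ref{obs.lower-exten}, use \eqref{eq.dual-modular}, and reflect back on $N$. The only place where you genuinely diverge is the derivation of the induced model \eqref{eq.upper-exten}. The paper gets it by a structural shortcut: it sets $m^{\prime}:=\uppmod (r_{u})$, notes $m=\uppmod(r)\approx m^{\prime}$ (hence $m\sim m^{\prime}$), and observes that $m^{\prime}$ vanishes on all sets containing $x$ while coinciding with $r_{u}$ on $\caPM$, so the three cases of \eqref{eq.upper-exten} can be read off directly from the scalar products of $m^{\prime}$. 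You instead push \eqref{eq.lower-exten} through the reflection duality, translating the triplet $\langle a,b|C\rangle$ to $\langle a,b|C^{\prime}\rangle$ with $C^{\prime}=N\setminus(a\cup b\cup C)$ and using $(T_{\iota}(r))_{\ell}=T_{\iota}(r_{u})$; your set-theoretic bookkeeping ($a\cup b\cup C^{\prime}=N\setminus C$, $M\setminus C^{\prime}=(a\cup b\cup C)\setminus x$, $M\setminus((a\cup b\cup C^{\prime})\setminus x)=C$) checks out, and the three case distinctions match up exactly. Your route is the more mechanical "transport everything through $T_{\iota}$" derivation and needs no new computation of scalar products, at the cost of the complementation bookkeeping; the paper's route avoids that bookkeeping by exploiting the especially simple form of $\uppmod$ on a $u$-standardized representative. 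Both are sound.
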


\begin{proof}
(i): Observation \ref{obs.reflect}(i) with $M$ in place of $N$ says $r\in\caKM$ iff
$r\compo\boliota= T_{\iota}(r)\in\caKM$.
The latter is equivalent to $T_{\iota}(\uppmod (r))\stackrel{\eqref{eq.dual-modular}}{=}\lowmod (T_{\iota}(r))\in\caK$ by Observation \ref{obs.lower-exten}(i).
Observation \ref{obs.reflect}(i) implies the latter is equivalent to $\uppmod (r)\in\caK$.
To derive \eqref{eq.upper-exten} denote $m^{\prime}:=\uppmod (r_{u})$, observe that
$r\approx r_{u} ~\Rightarrow~ m=\uppmod (r)\approx \uppmod (r_{u})=m^{\prime} ~\Rightarrow~ m\sim m^{\prime}$
and realize that $m^{\prime}$ vanishes outside $\caPM$ and coincides with $r_{u}$ within $\caPM$.\\[0.4ex]
(ii): the arguments are analogous to the case of (i): one applies Observation \ref{obs.reflect}(ii) with
$M$ in place of $N$, then Observation \ref{obs.lower-exten}(ii), \eqref{eq.dual-modular} and, finally,
Observation \ref{obs.reflect}(ii) with $N$.
\end{proof}

\begin{example}\label{exa.lower-exten}
Take $N=\{ a,b,c\}$, $M=\{ a,b\}$ and $\hat{r}=\delta_{\{ a,b\}}\in {\dv R}^{\caPM}$, which is extreme in $\caKM$.
One has $\hat{r}-\hat{r}_{u}=\delta_{\{ a,b\}} -\delta_{\emptyset}$ and the application of \eqref{eq.upp-exten-def} gives
$$
m=\uppmod (\hat{r}) = \delta_{\{a,b\}}  +\delta_{\{ a,b,c\}} -\delta_{c}\quad
\mbox{and}\quad
m_{\ell}= 2\cdot\delta_{\{a,b,c\}}  +\delta_{\{ a,b\}}  +\delta_{\{a,c\}}  +\delta_{\{ b,c\}}\,.
$$
Note that $\hat{r}_{u}=\delta_{\emptyset}\in {\dv R}^{\caPM}$ and $\uppmod (\hat{r}_{u})=\delta_{\emptyset}\in {\dv R}^{\caP}$.
Indeed, since $r\approx r_{u}=\delta_{\emptyset}$  one has
$\uppmod (r)\approx \uppmod (\delta_{\emptyset})$ and their $\ell$-standardizations
coincide: $m_{\ell}=(\delta_{\emptyset})_{\ell}$. This illustrates that $\uppmod$ for $u$-standardized functions
is just the ``upper" zero extension.
\end{example}

\section{Replications}\label{sec.replica}
In this section we discuss two other linear mappings to a higher-dimensional space
which, under certain non-restrictive conditions, preserve both supermodularity and extremality.
They cannot be interpreted directly as mappings between $\approx$-equivalence classes; however, after
minor modification, they can be viewed as mappings between $\approx$-equivalence classes
of {\em extreme\/} supermodular functions.

\subsection{Lower replication}\label{ssec.lower-replica}

\begin{defin}\rm\label{def.lower-replica}
Let $M,N$ be sets of variables such that $z\in M$ exists with $L:=M\setminus z\subset N$.
The {\em lower replication\/} transformation $\lowrepl : {\dv R}^{\caPM}\to {\dv R}^{\caP}$ is
defined as follows:
\begin{eqnarray}
\lefteqn{\hspace*{-1.8cm}\lowrepl:
r\in {\dv R}^{\caPM}\mapsto\, \lowrepl (r)\equiv m\in {\dv R}^{\caP},~~\mbox{where}} \nonumber\\[0.2ex]
m(S)&=&\left\{
\begin{array}{cl}
r(z\cup (S\cap L)) & \mbox{~~if~ $(N\setminus L)\subseteq S$,}\\
r(S\cap L) & \mbox{~~if~ $N\setminus (L\cup S)\neq\emptyset$,}\\
\end{array}
\right.
\qquad \mbox{for any $S\subseteq N$.}
\label{eq.low-replica-def}
\end{eqnarray}
\end{defin}
\medskip

To explain the motivation for the terminology ``decompose" virtually $r\in {\dv R}^{\caPM}$ into
its ``lower" part, which is its restriction to $\caPL$, and its ``upper" part,
which is its restriction to the class of sets $z\cup\caPL :=\{ z\cup R\,:\ R\subseteq L\}$.
In the non-trivial case $|N\setminus L|\geq 2$, the function $m=\lowrepl (r)$ is obtained by
copying once the ``upper" part of $r$ to $(N\setminus L)\cup\caPL$ and
by multiple copying  its ``lower" part (= the replication) to each $R\cup\caPL$, $R\subset N\setminus L$.

A simple observation is that, without loss of generality, one can limit oneself to the case
$|N\setminus L|=2$ because repeated lower replication is equivalent to
 lower replication with a larger set $N\setminus L$.
Indeed, if $N\setminus L=\{z_{1},z_{2},z_{3}\}$ then one first applies $\lowrepl$
with $N\setminus L=\{z_{1},z_{2}\}$ and gets $r^{\prime}=\lowrepl (r)$ on ${\cal P}(L\cup z_{1}\cup z_{2})$.
The choice $z^{\prime}=z_{2}$, $L^{\prime}=L\cup z_{1}$ and $N\setminus L^{\prime}=\{ z_{2},z_{3}\}$
has then the same effect as the application of $\lowrepl$ to $r$ with $N\setminus L=\{z_{1},z_{2},z_{3}\}$.

The lower replication can be interpreted as a simultaneous inversion of
some ``coinciding" projections. Specifically, if $N\setminus L=\{ x,y\}$
and $m=\lowrepl (r)$ then $r$ can be ``obtained" from $m$ in three ways:
taking the extraction minor $T_{|x}(m)$ and re-naming $y$ to $z$,
taking the extraction minor $T_{|y}(m)$ and re-naming $x$ to $z$, and
taking the contraction $T_{\{x,y\}\to x}(m)$ and re-naming $x$ to $z$,
respectively by renaming $y$ to $z$ in $T_{\{x,y\}\to y}(m)$.
The supermodular  extremality is not preserved by these projections, but,
it is preserved if they ``coincide".

\begin{obser}\label{obs.lower-repli}\rm  ~~
Let $N,M$ be sets of variables with $z\in M$, $L:=M\setminus z$ and $N\setminus L= \{ x,y\}$.
Assume $r\in {\dv R}^{\caPM}$.
\begin{itemize}
\item[(i)]
One has $m=\lowrepl (r)\in\caK$ iff [\,$r\in\caKM$ and $r(z)\geq r(\emptyset )$\,]. The independency model
induced by $\lowrepl (r)$ is then determined as follows: given $\langle a,b|C\rangle\in\caE$ one has
$a\ci b\,|\,C\,\,[m]$ in the following cases:
\begin{eqnarray*}
a\cap L\ci b\cap L\,|\, C\cap L\,\, [r] &\mbox{in case}& \{x,y\}\setminus (a\cup b\cup C)\neq\emptyset\,,\\
a\ci b\,|\, z\cup (C\cap L)\,\, [r] &\mbox{in case}& \{x,y\}\subseteq C,\\
z\ci \{a,b\}\cap L\,|\, C\cap L\,\, [r] &\mbox{in case}& |\{x,y\}\cap C|=1 ~\&~ \{x,y\}\subseteq a\cup b\cup C,\\
r(z\cup C)=r(C) &\mbox{in case}& \{x,y\}=\{ a,b\}\,.
\end{eqnarray*}
\item[(ii)] If $r\in\calK_{\ell}(M)$ then $\lowrepl (r)\in\caKl$.
A function $r\in\calK_{\ell}(M)$ is extreme in  $\caKM$ iff $\lowrepl (r)$ is extreme in $\caK$.
\item[(iii)] If $r$ is extreme in $\caKM$ and $r(z)=r(0)$ then $\lowrepl (r)$ is extreme in $\caK$
and, moreover, $\lowrepl (r)\approx\lowrepl (r_{\ell})$. Conversely, if $\lowrepl (r)$ is extreme
in $\caK$ then either the above case occurs or one has $r(z)>r(\emptyset )$, $r\in\caLM$
and $\lowrepl(r)\approx [r(z)-r(\emptyset)]\cdot m^{\{x,y\}\subseteq}$.
\end{itemize}
\end{obser}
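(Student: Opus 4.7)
My plan is to compute $\langle m, u_{\langle a,b|C\rangle}\rangle$ for an arbitrary $\langle a,b|C\rangle\in\caE$ by cases on how $\{x,y\}=N\setminus L$ meets $a\cup b\cup C$; the same case analysis will drive parts (ii) and (iii). From \eqref{eq.low-replica-def} each of the four terms $m(C), m(a\cup C), m(b\cup C), m(a\cup b\cup C)$ evaluates either to $r(T\cap L)$ (``lower'') or to $r(z\cup(T\cap L))$ (``upper'') according to whether the argument $T$ contains both $x$ and $y$. The cases are: (A) $\{x,y\}\not\subseteq a\cup b\cup C$, so no term is upper and the expression telescopes to $\langle r, u_{\langle a\cap L,b\cap L|C\cap L\rangle}\rangle$, non-negative and trivially zero when an endpoint lies in $\{x,y\}$; (B) $\{x,y\}\subseteq C$, every term is upper and the expression becomes $\langle r, u_{\langle a,b|z\cup(C\cap L)\rangle}\rangle$; (C) exactly one of $x,y$ lies in $\{a,b\}$ and the other in $C$ — the expression becomes $\langle r, u_{\langle z,b'|C\cap L\rangle}\rangle$ with $\{b'\}=\{a,b\}\cap L$; (D) $\{a,b\}=\{x,y\}$, only $a\cup b\cup C$ is upper, giving $r(z\cup C)-r(C)$ with $C\subseteq L$. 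Supermodularity of $r$ on $M$ yields non-negativity in (A), (B), (C); only (D) produces the new constraint $r(z\cup C)\geq r(C)$, equivalent to $r(z)\geq r(\emptyset)$ since the increment $T\mapsto r(z\cup T)-r(T)$ is non-decreasing for supermodular $r$. The four bullet descriptions of $\calI(m)$ read off from the equality versions of these formulas.

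\textbf{Part (ii).} For $r\in\caKMl$, $\ell$-standardization of $\lowrepl(r)$ is a direct check from \eqref{eq.low-replica-def} using $r(\emptyset)=r(i)=0$ for $i\in M$. For extremality the plan is to exhibit $\lowrepl(\caKMl)$ as a face of $\caKl$ and invoke the standard principle that a linear injection with face-image both preserves and reflects extreme rays. Concretely, I will show that
\[
G \;:=\; \bigl\{\, m\in\caKl \,:\, m(T\cup x)=m(T)\ \&\ m(T\cup y)=m(T)\ \forall\, T\subseteq L,\ m(\{x,y\})=0\,\bigr\}
\]
coincides with $\lowrepl(\caKMl)$. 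Inclusion $\lowrepl(\caKMl)\subseteq G$ is immediate from \eqref{eq.low-replica-def} combined with $r(z)=0$; for the converse, given $m\in G$ I put $r(T):=m(T)$ on $\caPL$ and $r(z\cup T):=m(\{x,y\}\cup T)$, check supermodularity of $r$ by case split on whether $z$ lies in each argument (reducing to supermodularity of $m$), note $r\in\caKMl$ from $m(\{x,y\})=0$ and the $\ell$-standardization of $m$, and verify $\lowrepl(r)=m$ by inspection. Finally $G$ is a face of $\caKl$ because each of the quantities $m(T\cup x)-m(T),\ m(T\cup y)-m(T),\ m(\{x,y\})$ is a non-negative linear functional on $\caKl$ (by monotonicity of $\ell$-standardized supermodular functions), so any decomposition $m=m_1+m_2$ inside $\caKl$ with $m\in G$ forces each summand to kill all these functionals.

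\textbf{Part (iii).} Linearity of $\lowrepl$ together with the identities $\lowrepl(m^{\emptyset\subseteq})=m^{\emptyset\subseteq}$, $\lowrepl(m^{i\subseteq})=m^{i\subseteq}$ for $i\in L$, and (crucially) $\lowrepl(m^{z\subseteq})=m^{\{x,y\}\subseteq}$ yields
\[
\lowrepl(r)_\ell \;=\; \lowrepl(r_\ell) \;+\; [\,r(z)-r(\emptyset)\,]\cdot m^{\{x,y\}\subseteq},
\]
with both summands in $\caKl$. If $r$ is extreme in $\caKM$ with $r(z)=r(\emptyset)$, the second summand vanishes so $\lowrepl(r)\approx\lowrepl(r_\ell)$, and (ii) applied to the extreme $r_\ell\in\caKMl$ delivers extremality of $\lowrepl(r)$ in $\caK$. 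Conversely, assume $\lowrepl(r)$ is extreme. Then $\lowrepl(r)_\ell$ generates an extreme ray of $\caKl$, so the two cone-summands are proportional or one vanishes. Proportionality with both summands non-zero would force $r_\ell(T)=0$ for every $T\subseteq L$ while $r_\ell(z\cup T)=c>0$ for every $T\subseteq L$, contradicting $r_\ell(z)=0$. Hence either $r(z)=r(\emptyset)$, in which case (ii) combined with Corollary \ref{cor.isom-stan} gives extremality of $r$ in $\caKM$; or $\lowrepl(r_\ell)=0$, i.e.\ $r_\ell=0$ by injectivity of $\lowrepl$, i.e.\ $r\in\caLM$, which, via the displayed identity, leaves $\lowrepl(r)\approx[r(z)-r(\emptyset)]\cdot m^{\{x,y\}\subseteq}$ with $r(z)>r(\emptyset)$ forced by the non-vanishing of $\lowrepl(r)_\ell$. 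The main obstacle is the face identification in part (ii); once that is in place the remaining work is bookkeeping.
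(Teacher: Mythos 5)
Your proposal is correct and follows essentially the same route as the paper's proof: the same case analysis of $\langle a,b|C\rangle$ according to its intersection with $\{x,y\}$ in (i), the same identification of $\lowrepl(\lozenge_{\ell}(M))$ as a face of $\caKl$ cut out by monotonicity functionals with the extreme-ray transfer along the linear bijection in (ii), and in (iii) the same isolation of the $m^{\{x,y\}\subseteq}$ component (the paper writes $r=\alpha\cdot m^{z\subseteq}+r^{\prime}$ while you express the identical decomposition at the level of $\ell$-standardizations). The only cosmetic difference is that the paper gets the necessity of $r\in\caKM$ in (i) by citing Observation \ref{obs.minor}, whereas you read it off the computed identities (which indeed exhaust $\caEM$), so the arguments coincide in substance.
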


\begin{proof}
(i): the necessity of $r\in\caKM$ is easy for $r$ can be interpreted as the minor
of $m$ obtained by extracting of $x$ and renaming $y$ to $z$ (Observation \ref{obs.minor}).
Owing to supermodularity
$$
0\leq \langle m,u_{\langle x,y|\emptyset\rangle}\rangle =
m(x\cup y) -m(x)-m(y)+m(\emptyset) = r(z)-r(\emptyset)-r(\emptyset)+r(\emptyset)=
r(z)-r(\emptyset)\,.
$$
As concerns the sufficiency, the first step is to realize that $r(z)\geq r(\emptyset)$ and
supermodularity of $r$ allows one to observe by induction on $|C|$ that
$r(z\cup C)\geq r(C)$ for any $C\subseteq L$.
The second step is to consider $\langle a,b|C\rangle\in\caE$ and distinguish the cases
of its relation to $\{ x,y\}$. If
$\{x,y\}\setminus (a\cup b\cup C)\neq\emptyset$
then write using \eqref{eq.low-replica-def}:
\begin{eqnarray*}
\lefteqn{\hspace*{-5mm}\langle m,u_{\langle a,b|C\rangle}\rangle =
m(a\cup b\cup C) -m(a\cup C)-m(b\cup C)+m(C)}\\
&=&
r(\,(a\cup b\cup C)\cap L\,) -r(\,(a\cup C)\cap L\,)-r(\,(b\cup C)\cap L\,)+r(C\cap L)\\
&=& \langle r,u_{\langle a\cap L,b\cap L\,|\,C\cap L\rangle}\rangle\geq 0\quad \mbox{because $r\in\caKM$.}
\end{eqnarray*}
If $\{ x,y\}\subseteq a\cup b\cup C$ then, depending on $|\{ x,y\}\cap C|$, three subcases can occur.
In the subcase $x,y\in C$ one gets $\langle m,u_{\langle a,b|C\rangle}\rangle=
\langle r,u_{\langle a,b|\tilde{C}\cup z\rangle}\rangle\geq 0$, where $\tilde{C}=C\setminus \{x,y\}$. If $x=a$ and $y\in C$ then
denote $C^{\prime}=C\setminus y$ and write
\begin{eqnarray*}
\lefteqn{\hspace*{-10mm}\langle m,u_{\langle a,b|C\rangle}\rangle =
m(x\cup b\cup C) -m(x\cup C)-m(b\cup C)+m(C)}\\
&=&
r(z\cup b\cup C^{\prime}\,) -r(z\cup C^{\prime}\,) -r(b\cup C^{\prime}\,) +r(C^{\prime})
= \langle r,u_{\langle z,b|C^{\prime}\rangle}\rangle\geq 0\,.
\end{eqnarray*}
An analogous reasoning works for other variants of $|\{ x,y\}\cap C|=1$. Finally, in the
last subcase $\{ x,y\}=\{ a,b\}$ write
\begin{eqnarray*}
\lefteqn{\hspace*{-15mm}\langle m,u_{\langle a,b|C\rangle}\rangle =
m(x\cup y\cup C) -m(x\cup C)-m(y\cup C)+m(C)}\\
&=&
r(z\cup C\,) -r(C) -r(C) +r(C) =r(z\cup C\,) -r(C)\geq 0\,,
\end{eqnarray*}
by the first step. Thus, $m\in\caK$ has been verified.
The form of the independency model induced by $m$ follows from the proof, which breaks into the above mentioned cases.\\[0.4ex]
(ii): the fact that $\lowrepl$ maps $\ell$-standardized functions to $\ell$-standardized functions is evident;
thus, the first claim follows from (i). As concerns the second claim, by Corollary \ref{cor.isom-stan},
it is enough to show that $r$ generates an extreme ray of $\caKMl$ iff
$\lowrepl (r)$ generates an extreme ray of $\caKl$. For the necessity of the latter realize
that any $m\in\caKl$ is non-decreasing, for which reason
$$
\{ m\in\caKl \,:~  m (x\cup y)=0 ~~~\&~~~ m(x\cup R)=m(R)= m(y\cup R)~~\mbox{for any $R\subseteq L$}\,\}
$$
is a face of $\caKl$. One can easily observe that this face is nothing but the image of $\caKMl$ by $\lowrepl$.
Thus, given $r\in {\dv R}^{\caPM}$ generating an extreme ray of $\caKMl$,
if $m=\lowrepl (r)$ is an inner point of the segment between $m^{1}$ and $m^{2}$ in $\caKl$ then,
since $m$ belongs to that face, $m^{1},m^{2}$ do so. In particular, there are $r^{1},r^{2}\in\caKMl$
such that $\lowrepl (r^{i})=m^{i}$ for $i=1,2$ and, by linearity of the inversion of $\lowrepl$,
$r$ is an inner point of the segment between $r^{1}$ and $r^{2}$.
Thus, $r^{i}$ are non-negative multiples of $r$ and $m^{i}$
must be non-negative multiples of $m$; the fact that $m=\lowrepl (r)$ generates an extreme ray of
$\caKl$ has been verified. The sufficiency of the latter condition for $r$ generating an extreme
ray of $\caKMl$ can be shown analogously.\\[0.4ex]
(iii): as concerns the first claim, realize that in case $r(z)=r(\emptyset )$ the formula
\eqref{eq.l-stan} implies that $r-r_{\ell}$ is a linear combination of $m^{\emptyset\subseteq}$
and $m^{i\subseteq}$ for $i\in L$. Since these functions are mapped by $\lowrepl$ to the functions
of the same form on $\caP$, $\lowrepl (r)-\lowrepl (r_{\ell})$ is a linear combination of these,
$\lowrepl (r)\approx\lowrepl (r_{\ell})$ and the first claim follows from (ii) using Corollary~\ref{cor.isom-stan}.
As concerns the converse claim, if $r(z)=r(\emptyset)$, use the same reasoning and the converse
implication in~(ii). In case $\alpha:=r(z)-r(\emptyset)\neq 0$ write $r=\alpha\cdot m^{z\subseteq}+r^{\prime}$,
where $r^{\prime}\approx r$ satisfies $r^{\prime}(z)=r^{\prime}(\emptyset )$.
Observe $\lowrepl (r)\in\caK ~\stackrel{\mbox{\tiny (i)}}{\Rightarrow}~ r\in\caKM$
and, necessarily $\alpha = r(z)-r(\emptyset)>0$. Then $r\in\caKM
~\Leftrightarrow~ r^{\prime}\in\caKM  ~\stackrel{\mbox{\tiny (i)}}{\Rightarrow}~
\lowrepl (r^{\prime})\in\caK$. As $\lowrepl$ maps $m^{z\subseteq}$ to $m^{\{ x,y\}\subseteq}$ one has
$\lowrepl (r)=\alpha\cdot m^{\{ x,y\}\subseteq} +\lowrepl (r^{\prime})$, where $m^{\prime}:=\lowrepl (r^{\prime})$
is not a non-zero multiple of $m^{\{ x,y\}\subseteq}$ since $m^{\prime}(\{ x,y\})=m^{\prime}(\emptyset )$.
Thus, provided $\lowrepl (r)$ is extreme in $\caK$, the function $\lowrepl (r^{\prime})=m^{\prime}$ must be modular.
Because $m^{x\subseteq}$ and $m^{y\subseteq}$ are not in the image of $\lowrepl$, this means $m^{\prime}$ is
a linear combination of $m^{\emptyset\subseteq}$ and $m^{i\subseteq}$ for $i\in L$.
Hence, the same holds for $r^{\prime}$ on $\caPM$ and $r$ must belong to $\caLM$.
Since $\lowrepl (r^{\prime})\in\caL$ one has
$\lowrepl (r)\approx \alpha\cdot  m^{\{ x,y\}\subseteq}$.
\end{proof}

The following consequence allows one to interpret the lower replication as a mapping between
$\approx$-equivalence classes of extreme supermodular functions.

\begin{cor}\label{cor.lower-repli}\rm  ~~
In the situation from Observation \ref{obs.lower-repli},
if $r$ is extreme in $\caKM$ then
$$
\lowrepl (r)\approx \lowrepl (r_{\ell})~~\Leftrightarrow~~ \lowrepl (r) ~~\mbox{is extreme in $\caK$} ~~\Leftrightarrow~~ r(z)=r(\emptyset)\,.
$$
\end{cor}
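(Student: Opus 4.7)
The plan is to derive all three pairwise equivalences from Observation \ref{obs.lower-repli}(iii), with the hypothesis that $r$ is extreme in $\caKM$ used only to exclude the modular alternative appearing in that observation. One residual implication requires a short linear-algebraic computation with $\lowrepl$.

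First, $r(z)=r(\emptyset)\Rightarrow[\,\lowrepl(r)\mbox{ is extreme in }\caK\ \&\ \lowrepl(r)\approx\lowrepl(r_{\ell})\,]$ is exactly the first claim of Observation \ref{obs.lower-repli}(iii); this simultaneously handles one direction of both of the stated equivalences.

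For $\lowrepl(r)\mbox{ extreme in }\caK\Rightarrow r(z)=r(\emptyset)$, I would invoke the converse claim of Observation \ref{obs.lower-repli}(iii), which offers two alternatives: either $r(z)=r(\emptyset)$, or $r\in\caLM$ together with $\lowrepl(r)\approx [r(z)-r(\emptyset)]\cdot m^{\{x,y\}\subseteq}$. The alternative $r\in\caLM$ contradicts the hypothesis that $r$ is extreme in $\caKM$: by the cryptic reformulation of extremality recorded in the remark after Corollary \ref{cor.isom-stan}, every extreme supermodular function lies outside the linear space of modular functions. Hence only the first alternative survives.

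For the remaining implication $\lowrepl(r)\approx\lowrepl(r_{\ell})\Rightarrow r(z)=r(\emptyset)$, I would use linearity of $\lowrepl$. Writing
\[
r-r_{\ell} \,=\, r(\emptyset)\cdot m^{\emptyset\subseteq}+[r(z)-r(\emptyset)]\cdot m^{z\subseteq}+\sum_{i\in L}[r(i)-r(\emptyset)]\cdot m^{i\subseteq}\in\caLM
\]
by \eqref{eq.l-stan}, a direct inspection of \eqref{eq.low-replica-def} yields $\lowrepl(m^{\emptyset\subseteq})=m^{\emptyset\subseteq}$, $\lowrepl(m^{i\subseteq})=m^{i\subseteq}$ for $i\in L$, and $\lowrepl(m^{z\subseteq})=m^{\{x,y\}\subseteq}$ (the last identity already appears within the proof of (iii)). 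Hence
\[
\lowrepl(r)-\lowrepl(r_{\ell}) \,=\, \lowrepl(r-r_{\ell}) \,=\, [r(z)-r(\emptyset)]\cdot m^{\{x,y\}\subseteq}+l
\]
for some $l\in\caL$, and since $m^{\{x,y\}\subseteq}$ is supermodular but not modular, this difference belongs to $\caL$ iff $r(z)=r(\emptyset)$. The main obstacle is essentially packaging; the only genuinely new ingredient is the elementary observation that $\lowrepl$ maps every basis element of $\caLM$ into $\caL$ except for $m^{z\subseteq}$, whose image $m^{\{x,y\}\subseteq}$ is non-modular.
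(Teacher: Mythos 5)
Your proof is correct. It rests on the same two pillars as the paper's own argument -- Observation \ref{obs.lower-repli}(iii) together with the fact that an extreme element of $\caKM$ cannot be modular -- and your treatment of the implications ``extreme $\Rightarrow r(z)=r(\emptyset)$'' and ``$r(z)=r(\emptyset)\Rightarrow$ [extreme \& $\approx$]'' coincides with the paper's. Where you genuinely diverge is the remaining leg: the paper closes the cycle by proving $\lowrepl(r)\approx\lowrepl(r_{\ell})\Rightarrow \lowrepl(r)$ extreme, using part (ii) of the Observation (extremality of $\lowrepl(r_{\ell})$ for the extreme $\ell$-standardized representative) together with the invariance of extremality within an $\approx$-class coming from Corollary \ref{cor.isom-stan}; you instead prove $\lowrepl(r)\approx\lowrepl(r_{\ell})\Rightarrow r(z)=r(\emptyset)$ directly, by computing the images under $\lowrepl$ of the modular basis $m^{\emptyset\subseteq}$, $m^{i\subseteq}$ ($i\in L$), $m^{z\subseteq}$ and observing that only $m^{z\subseteq}\mapsto m^{\{x,y\}\subseteq}$ leaves the modular space. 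Your computation is correct (the identity $\lowrepl(m^{z\subseteq})=m^{\{x,y\}\subseteq}$ is in fact already used inside the paper's proof of (iii)) and has the advantage of being self-contained and of not re-invoking part (ii) or the standardization machinery at the level of the corollary; the paper's route buys a slightly shorter argument by recycling the already-established extremality statement (ii). Note only that, like the paper itself at this point, you are implicitly reading $\lowrepl(r)\approx\lowrepl(r_{\ell})$ as ``equal tables of scalar products'', equivalently ``difference in $\caL$'', even when $\lowrepl(r)$ is not yet known to lie in $\caK$; this extension of $\approx$ is harmless and is exactly how the paper uses it here.
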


\begin{proof}
Corollary \ref{cor.isom-stan} implies that the extremality of a function and of its $\ell$-standardized version are equivalent. Since $r_{\ell}\approx r$ is extreme in $\caKM$, by Observation \ref{obs.lower-repli}(ii),
$\lowrepl (r_{\ell})$ is extreme in $\caK$. Hence, $\lowrepl (r)\approx \lowrepl (r_{\ell})$ implies the extremality of $\lowrepl (r)$ in $\caK$.
If $\lowrepl (r)$ is extreme in $\caK$ then, by Observation \ref{obs.lower-repli}(i), $r(z)\geq r(\emptyset)$. The case $r(z)-r(\emptyset )>0$ cannot
occur because then, by Observation \ref{obs.lower-repli}(iii), $r\in\caLM$, which is a contradiction with
the assumption that $r$ is extreme in $\caKM$. Therefore, $r(z)=r(\emptyset )$, which implies that $\lowrepl (r)\approx \lowrepl (r_{\ell})$, again by Observation \ref{obs.lower-repli}(iii).
The equivalence of the extremality of $\lowrepl (r)$ in $\caK$ with $r(z)=r(\emptyset)$ follows analogously from Observation \ref{obs.lower-repli}(iii).
\end{proof}

To summarize: any extreme supermodular function $r$ on $\caPM$ produces, through the lower replication,
an extreme supermodular function on $\caP$. Of course, $\lowrepl (r)$ itself need not be supermodular
and, even if supermodular, it need not be extreme in $\caK$. However, one can always take $r_{\ell}$
in place of $r$ and, by Observation \ref{obs.lower-repli}(ii), $\lowrepl (r_{\ell})$ is ensured to be extreme
in $\caK$. Moreover,  by Corollary \ref{cor.lower-repli}, if $r\approx r^{\prime}\in\caKM$ such that $\lowrepl (r^{\prime})$ is extreme in $\caK$ then $\lowrepl (r^{\prime})\approx \lowrepl (r_{\ell})$. Thus, the $\approx$-equivalence class of extreme functions in $\caK$ ascribed to the $\approx$-equivalence class of $r$ is uniquely determined!

The technical condition $r(z)=r(\emptyset )$, under which $\lowrepl (r)$ is guaranteed to be extreme (see Corollary \ref{cor.lower-repli}),
is not restrictive because, without loss of generality, one can limit oneself to the $\ell$-standardized framework.
\medskip

\begin{example}\label{exa.lower-repli}
Consider $N=\{a,b,c,d\}$, $M=\{ a,b,c\}$ and $z=c$, that is, $L=\{ a,b\}$. Take
$r=2\cdot\delta_{\{ a,b,c\}}+ \delta_{\{ a,b\}}+\delta_{\{ a,c\}}+\delta_{\{ b,c\}}\in\caKMl$.
Then the application of \eqref{eq.low-replica-def} gives
$$
m= \lowrepl (r)= 2\cdot\delta_{\{ a,b,c,d\}}+ \delta_{\{ a,b,c\}}+ \delta_{\{ a,b,d\}}+\delta_{\{ a,c,d\}}+\delta_{\{ b,c,d\}}+ \delta_{\{ a,b\}}\,,
~~\mbox{which is,}
$$
by Observation \ref{obs.lower-repli}(ii), an extreme $\ell$-standardized supermodular function on $\caP$.
\end{example}

\subsection{Upper replication}\label{ssec.upper-replica}
The lower replication from \S\,\ref{ssec.lower-replica} can be combined with
the reflection transformation from \S\,\ref{ssec.reflection}, which leads to the following concept.

\begin{defin}\rm\label{def.upper-replica}
Let $M,N$ be sets of variables such that $z\in M$ exists with $L:=M\setminus z\subset N$.
The {\em upper replication\/} transformation $\upprepl : {\dv R}^{\caPM}\to {\dv R}^{\caP}$ is
defined as follows:
\begin{eqnarray}
\lefteqn{\hspace*{-1.8cm}\upprepl:
r\in {\dv R}^{\caPM}\mapsto\,  \upprepl(r)\equiv m\in {\dv R}^{\caP},~~\mbox{where}} \nonumber\\[0.2ex]
m(S)&=&\left\{
\begin{array}{cl}
r(z\cup (S\cap L)) & \mbox{~~if~ $S\setminus L\neq\emptyset$,}\\
r(S) & \mbox{~~if~ $S\subseteq L$,}\\
\end{array}
\right.
\qquad \mbox{for any $S\subseteq N$.}
\label{eq.upp-replica-def}
\end{eqnarray}
\end{defin}
\medskip

It can be derived easily from the definition that the
application of the upper replication $\upprepl$ followed by the application of the
reflection transformation $T_{\iota}$ on ${\dv R}^{\caP}$ (see \S\,\ref{ssec.reflection}) has the same effect
as the application of $T_{\iota}$ on ${\dv R}^{\caPM}$ followed by the application
of the lower replication $\lowrepl$. The roles of $\upprepl$ and $\lowrepl$ can be interchanged here:
\begin{equation}
\forall\,r\in {\dv R}^{\caPM}\qquad
T_{\iota}(\upprepl (r)) =\lowrepl (T_{\iota}(r)) ~~\&~~ T_{\iota}(\lowrepl (r)) =\upprepl (T_{\iota}(r))\,.
\label{eq.dual-repl}
\end{equation}

The motivation for the terminology is similar as in the case of lower replication. In the non-trivial case $|N\setminus L|\geq 2$,
if $r\in {\dv R}^{\caPM}$ is virtually ``decomposed" into
its ``lower" part $T_{-z}(r)$ and its ``upper" part $T_{|z}(r)$ then the function $m=\upprepl (r)$
is obtained by copying once the ``lower" part of $r$ to $\caPL$ and
by (multiple) replication of its ``upper" part to each $R\cup\caPL$ where $\emptyset\neq R\subseteq N\setminus L$.

Without loss of generality, one can limit oneself to the case
$|N\setminus L|=2$ because repeated upper replication is equivalent to
upper replication with a larger set $N\setminus L$; the justification is analogous as in the case of lower replication.
The upper replication can also be interpreted as a simultaneous inversion of
certain ``coinciding" projections. Indeed, if $N\setminus L=\{ x,y\}$
and $m=\upprepl (r)$ then $r$ can be reconstructed from $m$ in three ways:
from the deletion minor $T_{-x}(m)$ by re-naming $y$ to $z$,
from the deletion minor $T_{-y}(m)$ by re-naming $x$ to $z$, and
from the contraction $T_{\{x,y\}\to x}(m)$ by re-naming $x$ to $z$.

\begin{obser}\label{obs.upper-repli}\rm  ~~
Let $N,M$ be sets of variables with $z\in M$, $L:=M\setminus z$ and $N\setminus L= \{ x,y\}$.
Assume $r\in {\dv R}^{\caPM}$.
\begin{itemize}
\item[(i)]
One has $m=\upprepl (r)\in\caK$ iff [\,$r\in\caKM$ and $r(L)\geq r(M)$\,].
The independency model induced by $\upprepl (r)$ is then determined as follows: given $\langle a,b|C\rangle\in\caE$ one has
$a\ci b\,|\,C\,\,[m]$ in the following cases
\begin{eqnarray*}
a\cap L\ci b\cap L\,|\, z\cup (C\cap L)\,\, [r] &\mbox{in case}& \{x,y\}\cap C\neq\emptyset\,,\\
a\ci b\,|\, C\,\, [r] &\mbox{in case}& (a\cup b\cup C)\subseteq L,\\
z\ci \{a,b\}\cap L\,|\, C\,\, [r] &\mbox{in case}& |\{x,y\}\cap \{ a,b\}|=1 ~\&~ \{x,y\}\cap C=\emptyset,\\
r(z\cup C)=r(C) &\mbox{in case}& \{x,y\}=\{ a,b\}\,.
\end{eqnarray*}
\item[(ii)] If $r\in\calK_{u}(M)$ then $\upprepl (r)\in\caKu$.
A function $r\in\calK_{u}(M)$ is extreme in  $\caKM$ iff $\upprepl (r)$ is extreme in $\caK$.
\item[(iii)] If $r$ is extreme in $\caKM$ and $r(L)=r(M)$ then
$\upprepl (r)$ is extreme in $\caK$ and, moreover, $\upprepl (r)\approx\upprepl (r_{u})$. Conversely, if $\upprepl (r)$ is extreme in $\caK$
then either the above case occurs or one has $r(L)>r(M)$, $r\in\caLM$ and
$\upprepl(r)\approx [r(L)-r(M)]\cdot v$, where $v(T):=\delta (T\subseteq L)$ for
$T\subseteq N$, denotes the identifier of subsets of $L$.
\end{itemize}
\end{obser}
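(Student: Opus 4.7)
The plan is to derive all three parts of Observation \ref{obs.upper-repli} from Observation \ref{obs.lower-repli} using the reflection identity \eqref{eq.dual-repl}, in the same spirit as Observation \ref{obs.upper-exten} was derived from Observation \ref{obs.lower-exten}. Writing $r^{\star}:=T_{\iota}(r)\in {\dv R}^{\caPM}$, the master identity $T_{\iota}(\upprepl (r)) =\lowrepl (r^{\star})$ reduces every claim about $\upprepl (r)$ to a companion claim about $\lowrepl (r^{\star})$, to which Observation \ref{obs.lower-repli} applies directly.

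For (i), combine Observation \ref{obs.reflect}(i) on $N$ with \eqref{eq.dual-repl}: $\upprepl (r)\in\caK$ iff $\lowrepl (r^{\star})\in\caK$. By Observation \ref{obs.lower-repli}(i) the latter is equivalent to $r^{\star}\in\caKM$ and $r^{\star}(z)\geq r^{\star}(\emptyset)$; applying Observation \ref{obs.reflect}(i) on $M$ gives $r\in\caKM$, and since $r^{\star}(z)=r(L)$ and $r^{\star}(\emptyset)=r(M)$, the inequality becomes $r(L)\geq r(M)$. For the independency model, use \eqref{eq.CI-reflect} to rewrite $a\ci b\,|\,C\,[\upprepl(r)]$ as $a\ci b\,|\,C^{c}\,[\lowrepl(r^{\star})]$, where $C^{c}:=N\setminus (a\cup b\cup C)$, and then invoke the four-case description in Observation \ref{obs.lower-repli}(i). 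Exploiting the key identity $a\cup b\cup C^{c}=N\setminus C$, each of the four conditions on $\{x,y\}$ relative to $C^{c}$ converts to the corresponding condition relative to $C$ displayed here (for instance, $\{x,y\}\subseteq C^{c}$ becomes $(a\cup b\cup C)\subseteq L$), and each inner-$M$ conditioning set of the form $M\setminus (\cdots\cup (C^{c}\cap L))$ collapses to $C$, so the resulting $r$-level conditional independence appears precisely as stated.

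For (ii), the remark concluding \S\ref{ssec.reflection} says that reflection on $M$ swaps $\calS_{\ell}(M)$ and $\calS_{u}(M)$, hence swaps $\calK_{\ell}(M)$ and $\calK_{u}(M)$. Thus $r\in\calK_{u}(M)$ iff $r^{\star}\in\calK_{\ell}(M)$, whereupon Observation \ref{obs.lower-repli}(ii) yields $\lowrepl (r^{\star})\in\caKl$; applying $T_{\iota}$ on $N$ gives $\upprepl (r)\in\caKu$, and the extremality equivalence transfers via the face-lattice auto-isomorphisms of Observation \ref{obs.reflect}(ii) applied on both $M$ and $N$. For (iii), the hypothesis $r(L)=r(M)$ translates into $r^{\star}(z)=r^{\star}(\emptyset)$; Observation \ref{obs.lower-repli}(iii) then gives extremality of $\lowrepl (r^{\star})$ together with $\lowrepl(r^{\star})\approx\lowrepl((r^{\star})_{\ell})$, and applying $T_{\iota}$ together with the identity $(r^{\star})_{\ell}=(r_{u})^{\star}$ (from the remark in \S\ref{ssec.reflection}) yields extremality of $\upprepl (r)$ and $\upprepl (r)\approx \upprepl (r_{u})$. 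Conversely, if $\upprepl (r)$ is extreme and $r(L)>r(M)$, the converse half of Observation \ref{obs.lower-repli}(iii) supplies $r^{\star}\in\caLM$ (hence $r\in\caLM$, since $T_{\iota}$ preserves $\caLM$ setwise) and $\lowrepl (r^{\star})\approx [r(L)-r(M)]\cdot m^{\{x,y\}\subseteq}$; pulling back by $T_{\iota}$ and verifying $T_{\iota}(m^{\{x,y\}\subseteq})(T)=\delta(\{x,y\}\subseteq N\setminus T)=\delta(T\subseteq L)=v(T)$ gives the stated equivalence.

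The main obstacle is the bookkeeping in part (i): each of the four cases of Observation \ref{obs.lower-repli}(i) requires a careful two-step complementation (first $C\mapsto C^{c}$ on $N$, then a second inner complementation inside $M$), and one must verify case-by-case that the resulting sets match the normal form displayed here. Once this case-dictionary is in place, parts (ii) and (iii) follow by essentially mechanical reflection-based transfers between the lower and upper standardizations.
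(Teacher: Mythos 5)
Your proposal is correct and takes essentially the same route as the paper: the duality identity \eqref{eq.dual-repl} together with Observation \ref{obs.reflect} (applied both on $M$ and on $N$) transfers all three parts from Observation \ref{obs.lower-repli}, which is exactly the paper's argument (your treatment of the induced model in (i) is in fact more detailed than the paper's one-line remark). One small imprecision in your summary of (i): the inner conditioning set does not always collapse to $C$ --- in the first case it collapses to $z\cup(C\cap L)$ and in the fourth case the outcome is the equality $r(z\cup C)=r(C)$ rather than a conditional independence --- but your announced case-by-case matching against the stated normal form handles this correctly.
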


\begin{proof}
(i): Denote $r^{\prime}:= T_{\iota}(r)$, apply Observation \ref{obs.reflect}(i) and Observation \ref{obs.lower-repli}(i) which say
\begin{eqnarray*}
\lefteqn{\hspace*{-6mm}m=\upprepl (r)\in\caK ~~\Leftrightarrow ~~
\lowrepl (r^{\prime}) =\lowrepl (T_{\iota}(r))\stackrel{\eqref{eq.dual-repl}}{=}  T_{\iota}(\upprepl (r))\in\caK}\\
&\Leftrightarrow& [\, r^{\prime}\in\caKM ~\&~ r^{\prime}(z)\geq r^{\prime}(\emptyset )\,]
~\Leftrightarrow~ [\, r\in\caKM ~\&~ r(L)\geq r(z\cup L)\,]\,.
\end{eqnarray*}
The form of the induced model also follows from combined Observations \ref{obs.reflect}(i) and \ref{obs.lower-repli}(i).\\[0.4ex]
(ii): realize that $r^{\prime}:= T_{\iota}(r)$ is $\ell$-standardized iff $r$ is $u$-standardized; thus, one can
use the same reasoning procedure and combine Observation \ref{obs.reflect}(i)(ii) with Observation \ref{obs.lower-repli}(ii).\\[0.4ex]
(iii): again, combine Observation \ref{obs.reflect}(ii) with \eqref{eq.dual-repl} and Observation \ref{obs.lower-repli}(iii); one also needs to realize that the reflection $T_{\iota}$ transforms $m^{\{x,y\}\subseteq}$ to the identifier $v$ of subsets of $L$.
\end{proof}

Analogously, one can verify the following fact.

\begin{cor}\label{cor.upper-repli}\rm  ~~
In the situation from Observation \ref{obs.upper-repli},
if $r$ is extreme in $\caKM$ then
$$
\upprepl (r)\approx \upprepl (r_{u})~~\Leftrightarrow~~ \upprepl (r) ~~\mbox{is extreme in $\caK$} ~~\Leftrightarrow~~
r(z\cup L)=r(L)\,.
$$
\end{cor}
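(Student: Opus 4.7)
My plan is to follow exactly the pattern of the proof of Corollary \ref{cor.lower-repli}, replacing $\lowrepl$ by $\upprepl$, the $\ell$-standardization by the $u$-standardization, and the scalar condition $r(z)=r(\emptyset)$ by its reflected analogue $r(z\cup L)=r(L)$. All the analytic content has already been packaged into Observation \ref{obs.upper-repli}, so the corollary is a short bookkeeping exercise that chains those clauses; alternatively, one could deduce it from Corollary \ref{cor.lower-repli} via the reflection identity \eqref{eq.dual-repl} together with $T_{\iota}(r_u)=(T_{\iota}(r))_{\ell}$, but the direct route is cleaner.

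First I would record that, by Corollary \ref{cor.isom-stan}, the property ``extreme in $\caKM$'' depends only on the $\approx$-equivalence class, so $r_{u}\approx r$ is extreme in $\caKM$. Observation \ref{obs.upper-repli}(ii) then supplies that $\upprepl(r_{u})$ is extreme in $\caK$; hence the assumption $\upprepl(r)\approx \upprepl(r_{u})$ immediately yields the extremality of $\upprepl(r)$ in $\caK$ (again using that extremality is a class invariant, by Corollary \ref{cor.isom-stan}). This handles the implication ``$\upprepl(r)\approx \upprepl(r_{u})\Rightarrow \upprepl(r)$ extreme in $\caK$''.

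Next, assuming $\upprepl(r)$ is extreme in $\caK$, I would show $r(z\cup L)=r(L)$. Observation \ref{obs.upper-repli}(i) already forces $r(L)\geq r(z\cup L)$. Strict inequality $r(L)>r(z\cup L)$ is ruled out by the converse part of Observation \ref{obs.upper-repli}(iii): it would imply $r\in\caLM$, contradicting the assumption that $r$ is extreme in $\caKM$ (a modular function is not extreme in the sense of Definition \ref{def.extreme}, since $\caLM$ forms the lineality and extreme functions lie in atomic faces of dimension $\dim(\caLM)+1$). Thus $r(z\cup L)=r(L)$, and then the first part of Observation \ref{obs.upper-repli}(iii) gives $\upprepl(r)\approx \upprepl(r_{u})$, closing the cycle of equivalences.

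Finally, the third equivalence ``$\upprepl(r)$ extreme in $\caK\Leftrightarrow r(z\cup L)=r(L)$'' has already been established through the above two implications (the forward direction just rehashed, and the reverse direction being one half of Observation \ref{obs.upper-repli}(iii) combined with Observation \ref{obs.upper-repli}(ii) applied to $r_{u}$). I do not anticipate any serious obstacle; the only delicate point is ruling out the modular alternative in the case of strict inequality, which is exactly the analogue of the step using the condition $r(z)>r(\emptyset)$ in Corollary \ref{cor.lower-repli}, and Observation \ref{obs.upper-repli}(iii) has been tailored to make that step immediate.
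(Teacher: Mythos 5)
Your argument is correct, and all the steps you invoke are indeed available: $r_u\approx r$ is extreme in $\caKM$ because extremality is an invariant of $\approx$-classes (Corollary \ref{cor.isom-stan}), Observation \ref{obs.upper-repli}(ii) applies to $r_u\in\calK_u(M)$, Observation \ref{obs.upper-repli}(i) gives $r(L)\geq r(z\cup L)$ once $\upprepl(r)\in\caK$, and the strict-inequality alternative in the converse part of Observation \ref{obs.upper-repli}(iii) is excluded exactly as you say, since a function in $\caLM$ lies in the zero of the face lattice and cannot be extreme; the cycle of three implications then yields both equivalences. The route differs from the paper's, though only mildly: the paper disposes of this corollary in one line by combining Observation \ref{obs.reflect}(ii), the duality formula \eqref{eq.dual-repl} and Corollary \ref{cor.lower-repli}, i.e.\ it transfers the already-proved lower-replication corollary through the reflection $T_{\iota}$ (using that $T_{\iota}$ preserves $\approx$ and extremality and swaps $\ell$- and $u$-standardizations), whereas you re-run the proof of Corollary \ref{cor.lower-repli} verbatim in the upper setting, leaning on Observation \ref{obs.upper-repli}, which is itself the reflected form of Observation \ref{obs.lower-repli}. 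Your version is self-contained in the ``upper'' world and spares the reader from checking how $T_{\iota}$ interacts with $r_u$ versus $(T_{\iota}(r))_{\ell}$ and with quantitative equivalence; the paper's version is shorter because all the bookkeeping was done once in the lower case. The mathematical content is the same, so either derivation is acceptable.
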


\begin{proof}
Combine Observation \ref{obs.reflect}(ii), \eqref{eq.dual-repl} and Corollary \ref{cor.lower-repli}.
\end{proof}

Thus, the upper replication also allows one to produce an extreme supermodular function on $\caP$
on basis of any given extreme supermodular function $r$ on $\caPM$. The point is that one can always take $r_{u}$
and $\upprepl (r_{u})$ is then extreme in $\caK$, by Observation \ref{obs.upper-repli}(ii).
On the top of that, by Corollary \ref{cor.upper-repli}, if $r\approx r^{\prime}\in\caKM$ such that $\upprepl (r^{\prime})$
is extreme in $\caK$ then $\upprepl (r^{\prime})\approx \upprepl (r_{u})$.
Thus, $\upprepl$  establishes a mapping between $\approx$-equivalence classes of extreme supermodular functions.
\medskip

\begin{example}\label{exa.upper-repli}
Consider $N=\{a,b,c,d\}$, $M=\{ b,c,d\}$ and $z=b$, that is, $L=\{ c,d\}$. Take
$r=\delta_{\{ b,c,d\}}+\delta_{\{ b,c\}}$, which is an extreme supermodular function on $\caPM$. Then $r_{u}=\delta_{d}+\delta_{\emptyset}\in {\dv R}^{\caPM}$.
The application of \eqref{eq.upp-replica-def} gives
$m= \upprepl (r_{u})=\delta_{d}+\delta_{\emptyset}\in {\dv R}^{\caP}$, which is
by Observation \ref{obs.upper-repli}(ii), an extreme $u$-standardized supermodular function on $\caP$.
Its $\ell$-standardized version is then
$$
m_{u} =2\cdot\delta_{\{ a,b,c,d\}}+ 2\cdot\delta_{\{ a,b,c\}}+ \delta_{\{ a,b,d\}}+\delta_{\{ a,c,d\}}+\delta_{\{ b,c,d\}}
+ \delta_{\{ a,b\}} + \delta_{\{ a,c\}} + \delta_{\{ b,c\}}\,,
$$
which is an extreme $\ell$-standardized supermodular function.
\end{example}



\appendix

\section{Facts on faces}\label{sec.face}

\begin{defin}\rm
Consider the linear space ${\dv R}^{\caP}$; let $K\subseteq {\dv R}^{\caP}$ be a convex cone.\\
The cone $K$ is called {\em pointed} if $K\cap (-K)=\{0\}$.\\
A {\em valid\/} inequality for $K$ is an inequality
(for vectors $m\in {\dv R}^{\caP}$) of the form
$$
k\leq \langle m,v\rangle\quad
\mbox{where $v\in {\dv R}^{\caP}$ and $k\in {\dv R}$ are such that $K\subseteq \{ m\in  {\dv R}^{\caP} :~ k\leq \langle m,v\rangle\,\}$}.
$$
A {\em face\/} of $K$ is its subset $F\subseteq K$ of the form
$$
F= \{ m\in K :~ k=\langle m,v\rangle\,\}\quad
\mbox{where $v\in {\dv R}^{\caP}$ and $k\in {\dv R}$ define a valid inequality for $K$}.
$$
The {\em dimension\/} of a face $F$ is the dimension of its affine hull.\\
The {\em relative interior\/} of a face $F$ is
$$
\inte (F) := F\setminus \bigcup\, \{ L :~ \mbox{$L$ is a face of $K$ such that $L\subset F$}\,\}\,.
$$
\end{defin}

\begin{obser}\rm
Every non-empty face $F$ of a convex cone $K$ is defined by an inequality of the form
$0\leq \langle m,v\rangle$. If an inner point of a segment in $K$ belongs a face $F$ (of $K$)
then the whole segment belongs to $F$.
\end{obser}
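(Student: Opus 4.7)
The plan is to treat the two assertions in turn, with the first carrying almost all of the content and the second following quickly from it.

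For the first assertion, I start from the definition of a face: $F = \{m \in K : k = \langle m, v\rangle\}$, where $k \leq \langle m, v\rangle$ is a valid inequality for $K$. The goal is to show that one can take $k = 0$. Here I use the cone structure twice. First, since $K$ is a convex cone it contains the origin, so evaluating the valid inequality at $0$ gives $k \leq 0$. Second, since $F$ is non-empty, pick any $m_0 \in F$; then $\langle m_0, v\rangle = k$, and because $K$ is a cone, $2 m_0 \in K$, so validity of the inequality at $2 m_0$ gives $k \leq \langle 2 m_0, v\rangle = 2k$, i.e.\ $k \geq 0$. Combining, $k = 0$, and $F = \{m \in K : \langle m, v\rangle = 0\}$ is defined by the valid inequality $0 \leq \langle m, v\rangle$.

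For the second assertion, apply the first part to write $F = \{m \in K : \langle m, v\rangle = 0\}$, with the background property $\langle m, v\rangle \geq 0$ for every $m \in K$. Suppose $m \in F$ is an inner point of a segment $[m_1, m_2]$ contained in $K$, i.e.\ $m = \alpha m_1 + (1-\alpha) m_2$ for some $0 < \alpha < 1$. Then $0 = \langle m, v\rangle = \alpha \langle m_1, v\rangle + (1-\alpha) \langle m_2, v\rangle$ is a convex combination, with strictly positive coefficients, of two non-negative reals; hence both summands vanish and $\langle m_1, v\rangle = \langle m_2, v\rangle = 0$. Thus $m_1, m_2 \in F$, and by convexity of $F$ the whole segment $[m_1, m_2]$ lies in $F$.

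The only subtle step is the first one, namely pinning down $k = 0$; once this is done, the rest is a routine calculation with non-negative numbers. The key observation is that one exploits the cone structure in two complementary ways -- the presence of $0$ to get $k \leq 0$, and closure under the scaling $m_0 \mapsto 2 m_0$ to get $k \geq 0$ -- and no further assumptions on $K$ (closedness, pointedness, finite dimensionality) are needed.
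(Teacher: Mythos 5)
Your proof is correct and follows essentially the same route as the paper: the origin in $K$ gives $k\leq 0$, scaling a point of the non-empty face by $2$ gives $k\geq 0$ (the paper phrases this as a contradiction from $k<0$), and the second claim follows from the fact that a strictly positive convex combination of non-negative scalar products can vanish only if both vanish. The only cosmetic difference is that you make the final convexity step (endpoints in $F$ imply the whole segment is in $F$) explicit, which the paper leaves implicit.
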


\begin{proof}
Indeed, we know that $m^{0}:=0\in K$ and, thus, $k\leq\langle m^{0},v\rangle =0$. Assume for a contradiction
$k<0$; the non-emptiness of $F$ then implies the existence of $y\in F\subseteq K$ with
$k=\langle y,v\rangle$. Since $K$ is a cone, $x=2\cdot y\in K$ and
$k> 2\cdot k = \langle x,v\rangle$ for which reason $k\leq \langle m,v\rangle$ does not hold
for $m=x\in K$, which is a contradiction.

To show the second statement, assume $m^{1},m^{2}\in K$ such that
$\alpha\cdot m^{1} + (1-\alpha )\cdot m^{2}\in F$ for some $\alpha\in (0,1)$. That means, by linearity of
scalar product, $0=\alpha\cdot \langle m^{1},v\rangle +(1-\alpha )\cdot \langle m^{2},v\rangle$. Since the
respective inequality
is valid for $K$ we know $0\leq\langle m^{1},v\rangle$ and $0\leq\langle m^{2},v\rangle$, which
together with the equality forces $\langle m^{1},v\rangle =0=\langle m^{2},v\rangle$, that is, $m^{1},m^{2}\in F$.
\end{proof}

{\bf Basic facts on the face lattice}\\[0.4ex]
The class of non-empty faces of a {\em polyhedral cone} $K\subseteq {\dv R}^{\caP}$,
that is, of a cone $K$ defined by means of a finite number of inequalities
of the type $0\leq\langle m,v^{i}\rangle$ with $v^{i}\in {\dv R}^{\caP}$, forms a finite lattice relative
to the inclusion order $\subseteq$. This {\em face lattice\/} has the following properties.
\begin{itemize}
\item It is a {\em graded lattice} in which the dimension of a face is its grade in the lattice.
\item The {\em one} of the lattice, that is, the largest element (= face) is the whole cone $K$, with
the maximal dimension $\dim (K)$.
\item The {\em zero\/} of the lattice, that is, the smallest element (= face) is the linear
subspace $L=K\cap (-K)$, with the minimal dimension $\dim (L)$. In case of a pointed cone $K$ the dimension of $L$ is $0$.
\end{itemize}
In case $K\neq L$ we, moreover, distinguish atoms and co-atoms of the face lattice.
\begin{itemize}
\item The {\em atoms\/} of the lattice are the upper neighbors of the
zero in the lattice. This is such a face $F$ that the only proper sub-face $G\subset F$ is the linear space $G=L\equiv K\cap (-K)$. Since the face lattice is graded, an equivalent
characterization is that it is a face of the dimension $\dim (L)+1$. In case of a pointed cone $K$ it a face of the dimension $1$, that is,
an {\em extreme ray\/} of $K$. An equivalent definition of an extreme ray is as follows. A ray $R=\{ \lambda\cdot m\, :\ \lambda\geq 0\}$, $m\in K$ is
extreme if every segment in $K$ whose inner point belongs to $R$ must whole belong to $R$; formally: for every $m^{1},m^{2}\in K$,
if [$\alpha\cdot m^{1}+ (1-\alpha )\cdot m^{2}=m$ for $\alpha\in (0,1)$] then
[$m^{i}=\lambda^{i}\cdot m$ with $\lambda^{i}\geq 0,~ i=1,2$].
\item The {\em co-atoms\/} of the lattice are the lower neighbors of
the one in the lattice. This is such a face
$F$ that the only proper super-face $G\supset F$ is the whole cone $G=K$. Since the face lattice is graded, an equivalent characterization is that it is a face of the dimension $\dim (K)-1$. Such faces are called {\em facets\/}.
Since the face lattice is co-atomic, that is, every element is an infimum of
(a set of) co-atoms, we know that every face $F$ is the intersection of facets (containing $F$). Of course, in case of the largest face $F=K$ (= the one of the lattice) it is the intersection of the empty system of facets.
\end{itemize}
\smallskip

In case $K$ is a polyhedral cone, that is, defined by a list of linear inequalities
$$
K = \{ m\in {\dv R}^{\caP} :~ 0\leq \langle m,v^{i}\rangle\,\}\quad
\mbox{for $v^{1},\ldots, v^{t}\in  {\dv R}^{\caP}\setminus\{0\}$, $t\geq 1$,}
$$
one knows that the facets of $K$ are determined by those inequalities. More precisely, facets of $K$ are given by just those inequalities $0\leq \langle m,v^{i}\rangle$ that are not redundant, that is, not derivable from the others
(by a conic combination of inequalities).

\begin{thebibliography}{99}

\bibitem{bac10}
F.\ Bach.
Convex analysis and optimization with submodular functions: a tutorial.
A manuscript from November 2010, available at {\tt arXiv:1010.4207v2}\,.

\bibitem{KSTT12}
T.\ Kashimura, T.\ Sei, A.\ Takemura, K.\ Tanaka.
Cones of elementary imsets and supermodular functions: a review and some new results.
In Proceedings of the 2nd CREST-SBM International
Conference {\em Harmony of Gr\"{o}bner Bases and the Modern Industrial
Society}, World Scientific 2012, pp.\ 117-152.

\bibitem{KVV10}
J.\ Kuipers, D.\ Vermeulen, M.\ Voorneveld.
A generalization of the Shapley-Ichiishi result.
{\em International Journal of Game Theory} 39 (2010) 585-602.

\bibitem{roswei74}
J.\ Rosenm\"{u}ller, H.G.\ Weidner.
Extreme convex set functions with finite carrier: general theory.
{\em Discrete Mathematics} 10 (1974) 342-382.

\bibitem{sha72}
L.S.\ Shapley.
Cores of convex games.
{\em International Journal of Game Theory} 1 (1971/72) 11-26.

\bibitem{stuboukoc00}
M.\ Studen\'{y}, R.R.\ Bouckaert, T.\ Ko\v{c}ka.
Extreme supermodular set functions over five variables.
Research report n.\ 1977, Institute of Information Theory and Automation, Prague, January 2000.

\bibitem{stu05}
M.\ Studen\'{y}.
{\em Probabilistic Conditional Independence Structures}.
Springer-Verlag 2005.

\bibitem{SK16}
M.\ Studen\'{y}, T.\ Kroupa.
Core-based criterion for extreme supermodular functions.
{\em Discrete Applied Mathematics} 206 (2016) 122-151.

\bibitem{top98}
D.M.\ Topkis.
{\em Supermodularity and Complementarity}.
Princeton University Press 1998.

\bibitem{ZCJ09}
S.\ \v{Z}ivn\'{y}, D.A.\ Cohen, P.G.\ Jeavons.
The expressive power of binary submodular functions.
{\em Discrete Applied Mathematics} 157 (2009) 3347-3358.

\end{thebibliography}
\end{document}